\newcommand\R{\mathbb{R}}
\newcommand\Z{\mathbb{Z}}
\begin{document}

\numberwithin{figure}{section}
\newtheorem{thm1}{Theorem}[section]
\newtheorem{thm}[thm1]{Theorem}
\newtheorem{lem}[thm1]{Lemma}
\newtheorem*{thm2}{Theorem}
\newtheorem*{definition}{Definition}
\newtheorem{cor}[thm1]{Corollary}
\newtheorem{claim}[thm1]{Claim}
\newtheorem{conj}[thm1]{Conjecture}
\newtheorem*{rem}{Remark}
\newtheorem*{q}{Question}
\newtheorem*{question}{Question}
\newtheorem{ex}{Example}[section]

\newtheorem{corollary}[thm1]{Corollary}
\newtheorem{conjecture}[thm1]{Conjecture}
\newtheorem{theorem}[thm1]{Theorem}
\newtheorem{lemma}[thm1]{Lemma}
\newtheorem{exer}[thm1]{Exercise}
\newtheorem{proposition}[thm1]{Proposition}
\newtheorem{prop}[thm1]{Proposition}

\newcommand{\bi}{\begin{itemize}}
\newcommand{\ei}{\end{itemize}}
\newcommand{\be}{\begin{enumerate}}
\newcommand{\ee}{\end{enumerate}}
\newcommand{\ds}{\displaystyle}
\newcommand{\ul}{\underline}
\newcommand{\hnote}[1]{\marginnote{ \scriptsize \textcolor{red}{HNH:{#1}}}}
\newcommand{\anote}[1]{\marginnote{ \scriptsize \textcolor{orange}{Anna:{#1}}}}

    \counterwithout{figure}{section}
\interfootnotelinepenalty=10000

\title{An infinite family of knots whose hexagonal mosaic number is only realized in
              non-reduced diagrams}
\date{\today}
\author{Hugh Howards, Jiong Li, Xiaotian Liu, Anna Paulec}

\maketitle

\begin{abstract}
We give an infinite family of knots such that for any given $r \geq 3$, the family contains a knot which can be embedded on a hexagonal $r$-mosaic, but cannot fit on  a hexagonal $r$-mosaic in an embedding that achieves its crossing number.  This extends the rectangular mosaic result of Ludwig, Evans, and Paat \cite{L}.  
We also introduce a new tool for systematically finding all possible flypes for the diagram of any link thus making it easier to find all possible minimal crossing embeddings of prime, alternating knots.

\end{abstract}

\section{Introduction}

A great deal of work in topology and knot theory has been devoted to the understanding of quantum physics since Vaughn Jones' extraordinary creation of the Jones Polynomial \cite{J2}.  
Lomonaco and Kauffman introduced a way of studying quantum knots by defining knot mosaics in \cite{LK} .  They explain that mosaics give a blueprint to create an actual physical quantum system.  
Among other things they use it to glean information about the computational power necessary in order to simulate a quantum system.  Since then the study of mosaics and quantum knots has exploded.

Lomonaco and Kauffman continued exploration of mosaics in numerous papers including 
\cite{LK4} and \cite{LK3} as have many others.  An $r$-mosaic for Lomonaco and Kauffman's rectangular setting consists of an $r \times r$ board of square tiles with arcs of a knot or link running across them. 
More recently Jennifer McLoud-Mann expanded the idea by introducing hexagonal knot mosaics in \cite{jmm}.  Just as the plane can be tiled with squares it can be tiled with regular hexagons giving a new construction for quantum knots.  The possible hexagonal tiles up to rotation are depicted in Figure~\ref{fig:tiles}.  A bound for crossing numbers on hexagonal mosaics was given in \cite{hll}.  
A hexagonal $r$-mosaic consists of all the tiles in a hexagonal grid of radius $r$, so a 1-mosaic only contains one tile, while a 2-mosaic contains the central tile as well as the 6 tiles touching it etc.  We see the trefoil on a hexagonal 2-mosaic in Figure~\ref{fig:trefoil} on the left and a blank 4 mosaic board on the right.

\begin{figure}[ht]
	\centering
	{\includegraphics[width=.99\textwidth]{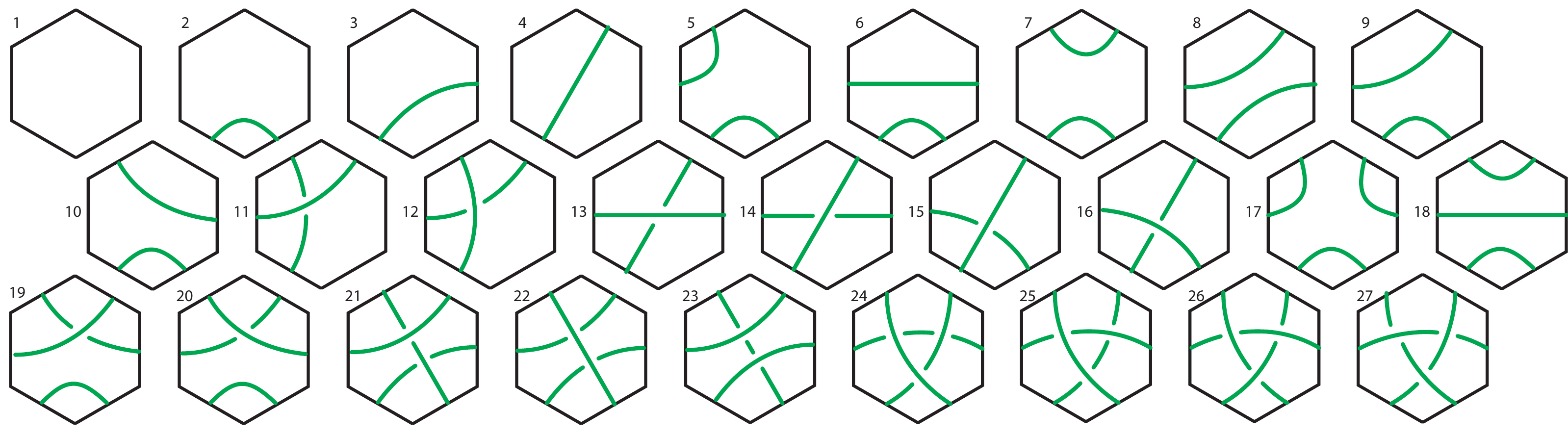}}
	\caption{The hexagonal mosaic tiles up to rotation.}
\label{fig:tiles}
\end{figure}

\begin{figure}[tpb]
\centering
\includegraphics[width=.25\textwidth]{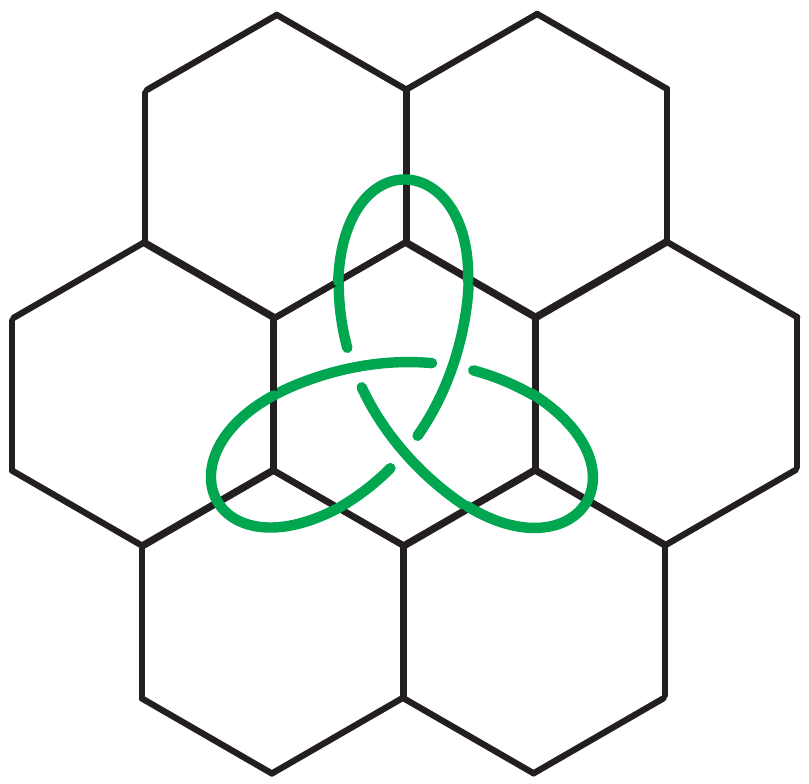}
\includegraphics[width=.27\textwidth]{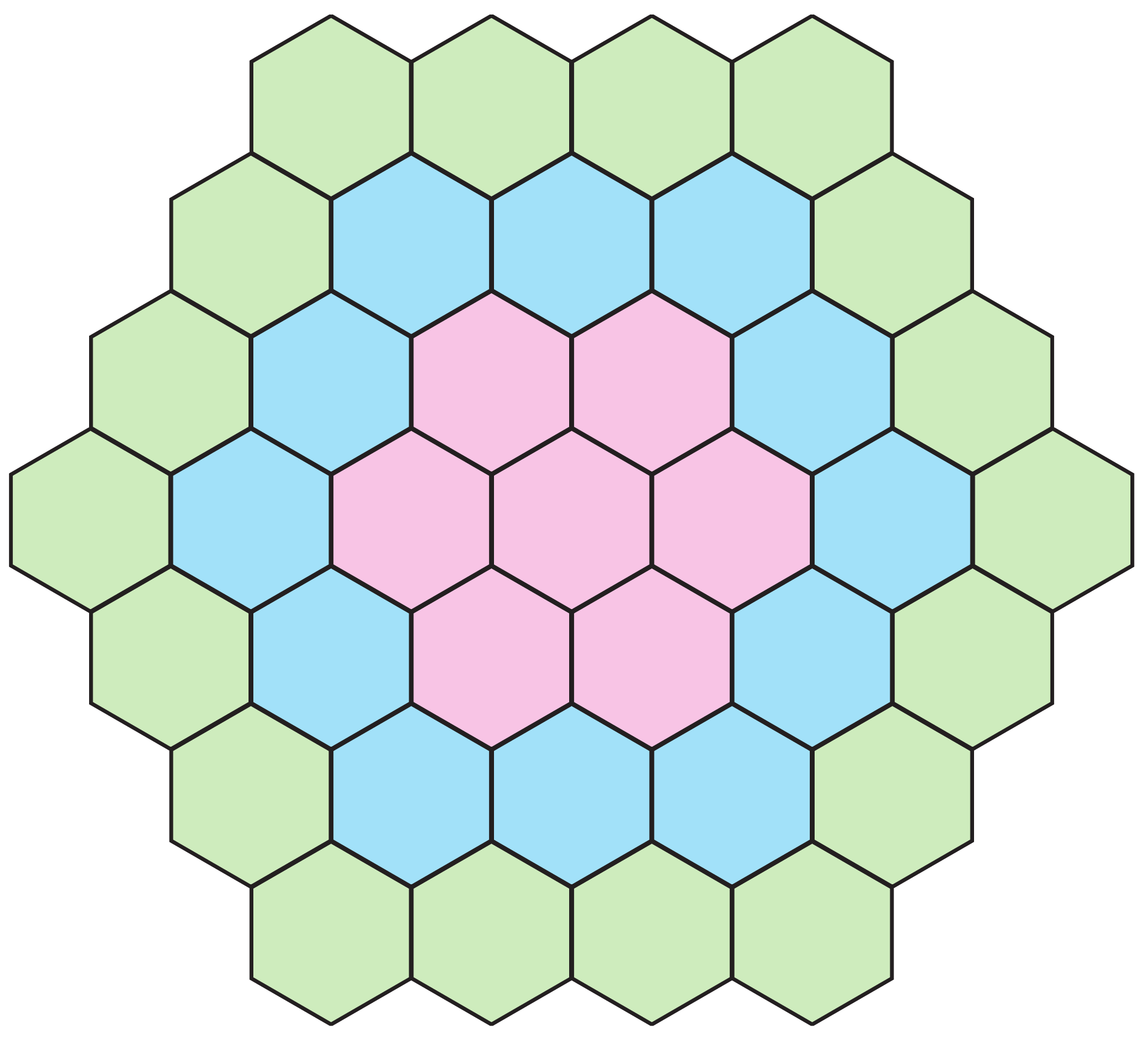}
\caption{On the left we see the trefoil on a hexagonal 2-mosaic.  On the right we see a 4-mosaic board.  The green tiles are the boundary tiles.  All the other tiles are called interior tiles.   The pink tiles which are a subset of the interior tiles are called the central tiles. The blue tiles are called the  penultimate corona (the second corona here).}

\label{fig:trefoil}
\label{fig:int}
\end{figure}

In this paper we extend a result of result of Ludwig, Evans, and Paat \cite{L} from rectangular mosaics to the hexagonal setting.
We give an infinite family of knots such that for any given $r \geq 3$, the family contains a knot which can be embedded on a hexagonal $r$-mosaic, but cannot fit on  a hexagonal $r$-mosaic in an embedding that achieves its crossing number.  Here we prove the result in the standard hexagonal setting, but the proof likely can be adapted to work in the semi-enhanced and enhanced setting as well (see Anna Paulec's master's thesis \cite{AP} for partial results in this direction as well as a full result in a new setting called crossing enhanced).  For the sake of brevity we restrict to the standard case here.

A second useful result in this paper that applies to knots in general and not just mosaics relates to finding all possible minimal crossing diagrams of prime, alternating knots or links. Menasco and Thistlethwaite's  solution \cite{MT} to Tait's famous $19^{th}$ century flyping conjecture  confirms that all
minimal crossing diagrams of a given prime, alternating knot or link are related by a sequence of flypes.
There are, however, multiple historical examples of flypes being overlooked in a knot diagram, including famously the Perko Pair (although in that case the knot was not alternating).   In this paper we introduce a new way of using a dual graph to systematically detect all possible flypes in a given diagram.  Note that while we apply the tool to learn information about mosaics it works just as well for any diagram of a link.

\section{Definitions}

\begin{figure}[!ht]
	\centering
	\setlength{\unitlength}{0.1\textwidth}
	\begin{picture}(10,3.0)

	\put(0,-.057){\includegraphics[width=.32\textwidth]{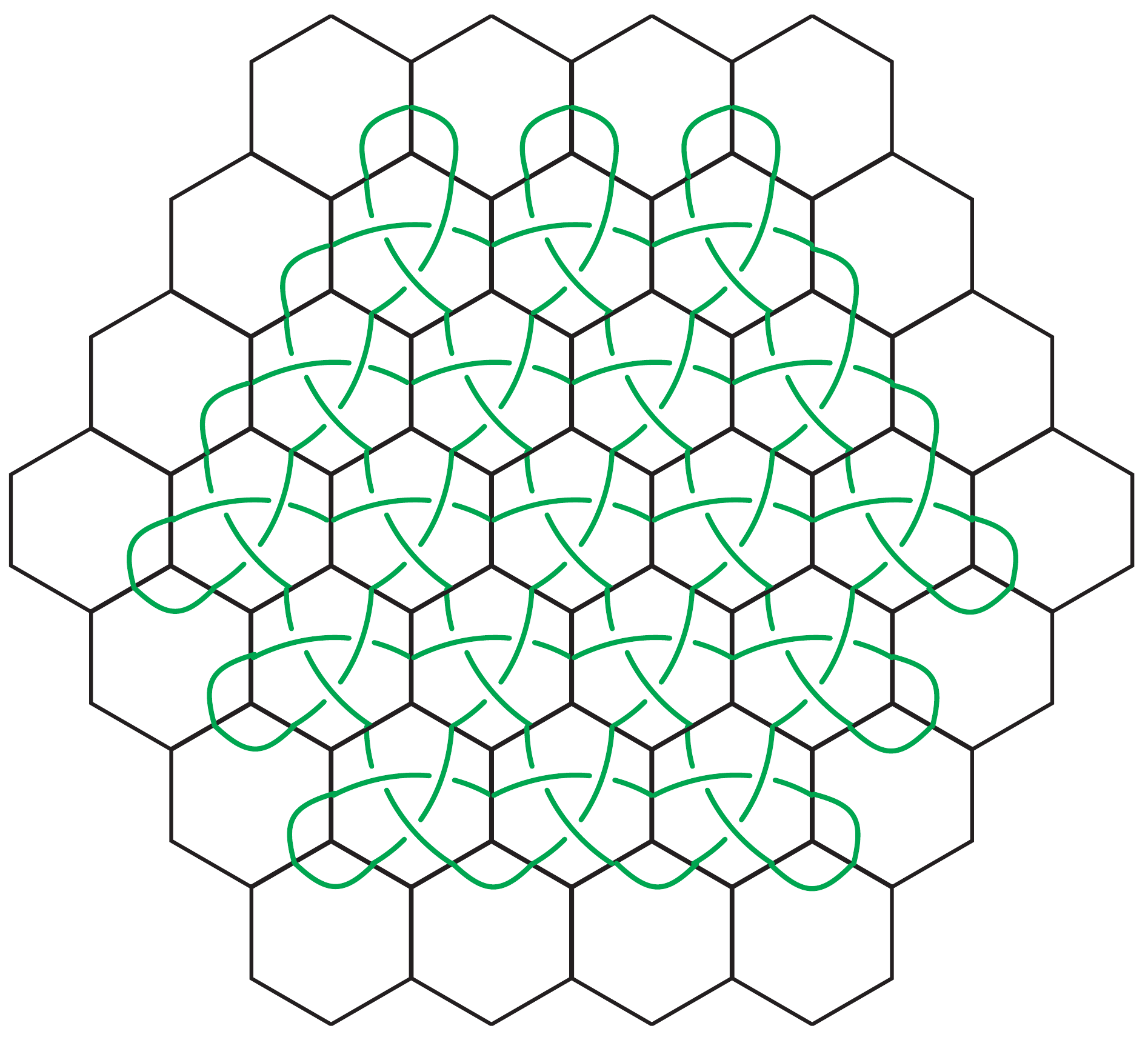}}
	\put(3.3,-.057){\includegraphics[width=.32\textwidth]{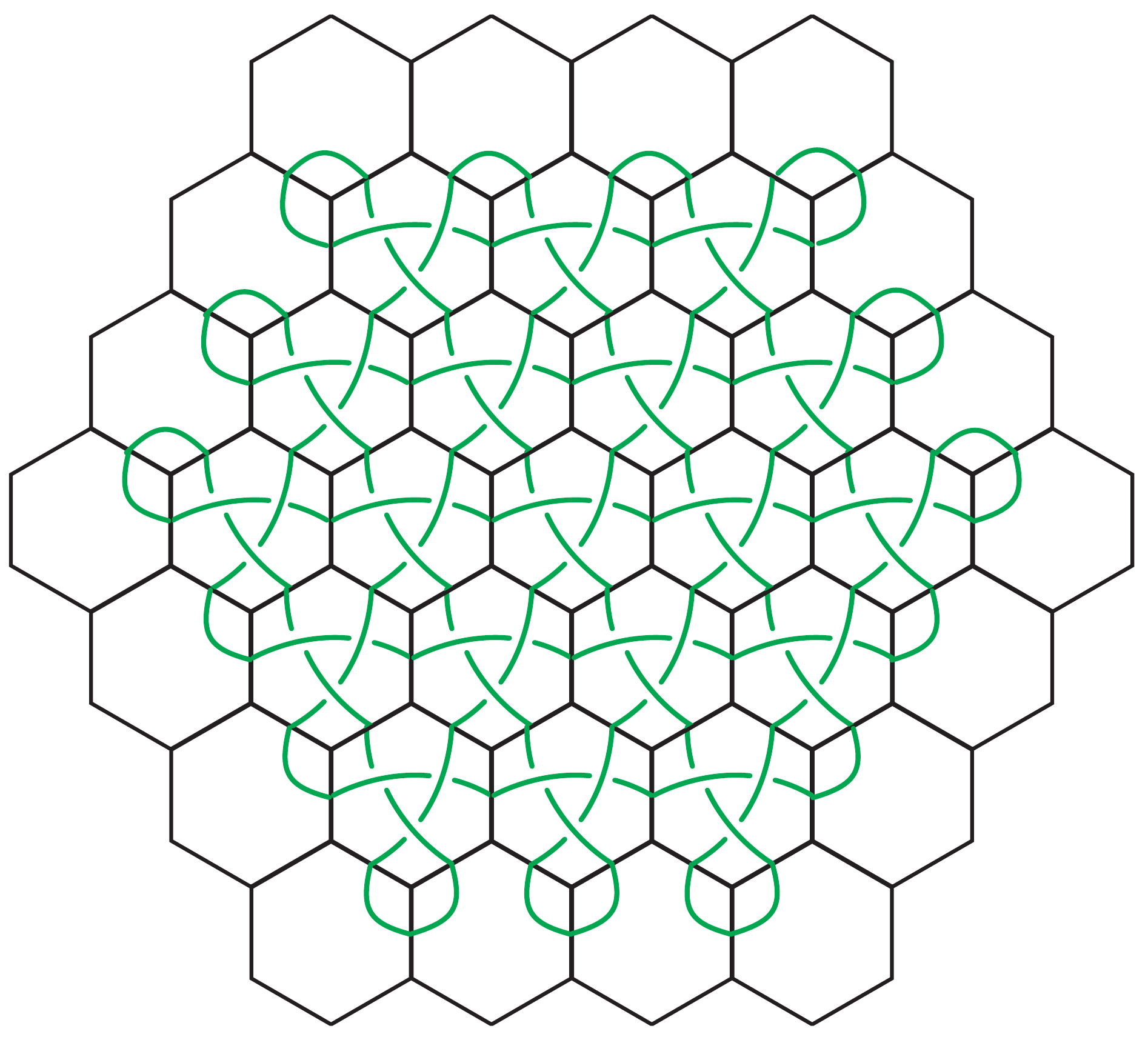}}
	\put(6.6,-.057){\includegraphics[width=.32\textwidth]{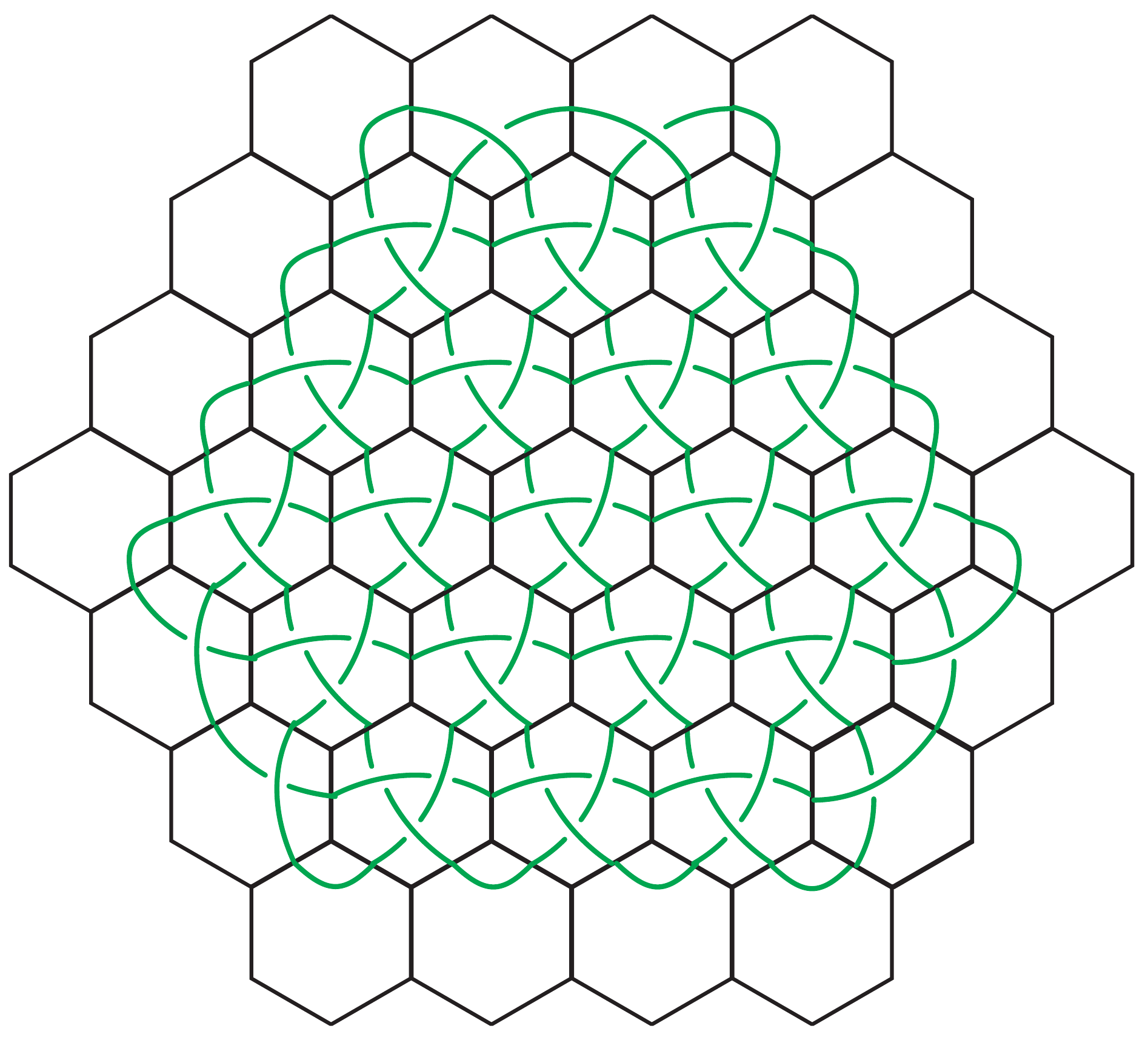}}
	\put(.3,2.5){$L_4$}
	\put(6.9,2.5){$\widehat{L_4}$ }
\end{picture}
	\caption{Here we see three 
    hexagonal mosaics which all agree on their interior tiles, but differ on the boundary tiles.  The first two are the two standard mosaics that can be obtained from those interior tiles and the link on the left is called $L_4$.    The link on the right is $\widehat{L_4}$, an enhanced hexagonal mosaic where crossings are allowed even in the boundary tiles.}
	\label{fig:l4}
\label{fig:twoways} 
 \end{figure}

We now establish some definitions, most of which are developed in \cite{hll} and \cite{jmm}.  For rectangular mosaics an $r$-mosaic is an $r \times r$ mosaic board made up of square tiles.  
A \emph{hexagonal $r$-mosaic} is a board of hexagonal tiles of radius $r$ centered around a given hexagon.  The hexagonal 1-mosaic has only one tile, and as with rectangular mosaics the 1-mosaic is too small to be useful.  The 2-mosaic (seen in Figure~\ref{fig:trefoil}) has the central tile plus six adjacent tiles for a total of seven tiles and so on.  A rectangular $r$-mosaic is in the shape of a square with outer sides of length $r$.  A hexagonal $r$-mosaic 
is roughly in the shape of a (wavy) hexagon where each of the outer sides 
has $r$ tiles. Call the tiles that correspond to the vertices of the hexagon \emph{corner tiles}.

As with rectangular mosaics the midpoint of each tile edge is called a {\em connection point} if it is
the endpoint of an arc drawn on that tile and a given tile
is {\em suitably connected}
if each of its connection points is identified with a connection
point of an adjacent tile. As with other mosaic papers, we are only interested in suitably connected mosaics.  

  The tiles $t$ tiles from from the central tile are called the \emph{$t^{th}$ corona}, thus the first ring of six tiles which are adjacent to the center tile form the first corona and the second ring (of twelve tiles) just outside those form the second corona, etc. 
Following the traditional definition in the rectangular case, 
we call the tiles in the outer ring (the $r-1^{st}$ corona) the \emph{boundary tiles}.    The remaining tiles of the mosaic are called  \emph{interior tiles}.  Sometimes we further divide the interior tiles into their own boundary and interior.  Let the subset of the interior tiles up through the $r-3^{rd}$ corona be called the \emph{central tiles}
 and  the $r-2^{nd}$ corona be called the \emph{penultimate corona} or \emph{penultimate tiles}. The interior tiles in Figure~\ref{fig:int} are shaded pink.  The second corona is the penultimate corona in that mosaic and is shaded blue.  The third corona is the boundary corona and is shaded green.   
Given a link mosaic the set of arcs that result from intersecting the link with the interior tiles are called the \emph{interior arcs} of the mosaic.

While it is impossible for 
 rectangular mosaics to have crossing tiles as boundary tiles and be suitably connected, a priori a suitably connected hexagonal mosaic could have a crossing boundary tile.  In a \emph{standard hexagonal mosaic}
one prohibits crossings in the boundary tiles and only allows tiles 1 through 5 from Figure~\ref{fig:tiles} on those tiles.  
This case is parallel to the rectangular case because once the interior tiles are fixed, there are exactly two ways to choose the boundary tiles and stay suitably connected.   If one allows crossings on the boundary tiles, the setting is called \emph{enhanced hexagonal mosaics}.   There is also a third setting that is less restrictive than the standard hexagonal mosaic setting and more restrictive than the enhanced setting where the boundary requirements are relaxed to allow Tile 6 of Figure~\ref{fig:tiles}, but does not allow crossings.  These are called \emph{semi-enhanced hexagonal mosaics}.  

McCloud Mann et al. prove their theorems in the standard setting in \cite{jmm}.  In \cite{hll} all three settings are addressed.  Here, like McCloud Mann, we will restrict ourselves to the standard setting for the sake of brevity.  For the rare time when we mention enhanced mosaics, we will denote them by $\widehat{M}$  and just use $M$ for a standard hexagonal mosaic (in \cite{hll} we dealt with all three hexagonal settings and the rectangular one so the notation was more complex). 
 An example of a mosaic interior together with the two possible ways to complete it as a standard hexagonal mosaic is seen on the left and in the middle in Figure~\ref{fig:twoways}.  On the right in that figure is an enhanced hexagonal mosaic with the same interior board.

On a standard hexagonal 2-mosaic there is only one crossing tile so the only non-trivial links we can get are a trefoil shown in Figure~\ref{fig:trefoil} and a Hopf link.  However, the existence of a non-trivial knot on a 2-mosaic demonstrates some of the potential power of hexagonal mosaics since rectangular mosaics up through 3-mosaics only yield unknots and even up through 5-mosaics the only knots one can get are the trefoil and figure eight knot,  but hexagonal 3-mosaics already yield knots up to crossing number 19 and hexagonal 5-mosaics contain knots with crossing number 108.  In \cite{AAGRK} Alanis et al. show that hexagonal mosaics are indeed more efficient at generating knots than rectangular mosaics.

A \emph{saturated} hexagonal mosaic  is one that uses only 3 crossing tiles  (one of the tiles labeled 23, 24, 25, and 26 in Figure~\ref{fig:tiles}) on the interior.
For a saturated mosaic, the penultimate tiles break down into three types of tiles.   There are, of course, six corner tiles defined above.  
 Let $T$ be a non-corner tile in the penultimate corona of a mosaic link $L$. The intersection of  $T$ with the boundary tiles contains two hexagonal edges and thus up to two connection points.  If $L$ uses both connection points as it does when the interior tiles are saturated then obviously one of two things must happen, either there is an arc of $L$ in the two boundary tiles adjacent to $T$ connecting those two points to each other (see, for example,  tiles $C', J'$, and  $M'$  in mosaic $L'$ on the bottom left in Figure~\ref{lp}) or the arcs of $L$ in the adjacent boundary tiles do not directly connect the two points to each other (see, for example, tiles $H'$ and $G'$ in mosaic $L' $ in the same figure).  If the connection points of $T$ are not connected directly to each other we say 
$T$ is a \emph{type I} tile.  If they are connected directly then $T$ is said to be a \emph{type II} tile.  
When the tiles in penultimate corona use all the connection points it shares with the boundary tiles such as when it
contains only 3 crossing tiles all of the tiles in the penultimate corona are exactly one of these three types and as we move around the the corona one will hit a corner tile, then a side of all type I tiles, then a corner tile then a side of all type II tiles, and so on alternating types as you pass each corner tile.

\section{Families of links and knots}
\label{sec:fam}
In this section we give the construction for three families of knots and links.  The first two were introduced in \cite{hll}.    We then use these two to build the desired family of knots for this paper. We prove it has special properties in Section~\ref{sec:lud}.
We first build $L_r$, a link which is saturated and achieves the highest crossing number possible of any link on a hexagonal $r$-mosaic, then alter it into $A_r$, a knot which is alternating and which  in \cite{hll} we show achieves the highest crossing number possible for a knot on a hexagonal $r$-mosaic. Finally we build $K_r$, a knot which fits on a hexagonal $r$-mosaic, but as we will show in Theorem~\ref{thm:fit} does not fit on a $r$-mosaic while in a diagram that achieves its crossing number.

   $L_r$ is a saturated mosaic in which every interior tile is chosen to be 
Tile 27 in Figure~\ref{fig:tiles} (if we chose Tile 26 every time we would simply get the mirror image). 
Once the interior tiles are set there are (as always) two ways to connect the boundary tiles.  One of these ways will result in a link with many nugatory crossings and one will have none.  The link with no nugatory crossings is called $L_r$.  $L_2$ is the trefoil (Figure~\ref{fig:trefoil}). See Figure~\ref{fig:l3} for $L_3$,  Figure~\ref{fig:l4} for $L_4$ and Figure~\ref{fig:l5k5} for $L_5$.
Note that   one could rotate any of the saturated tiles to get related, but slightly different saturated links.

\begin{figure}[tpb]  \centering
	\setlength{\unitlength}{0.1\textwidth}
	\begin{picture}(9.0,3.5)
	\put(0.0,0.0){\includegraphics[width=.97\textwidth]{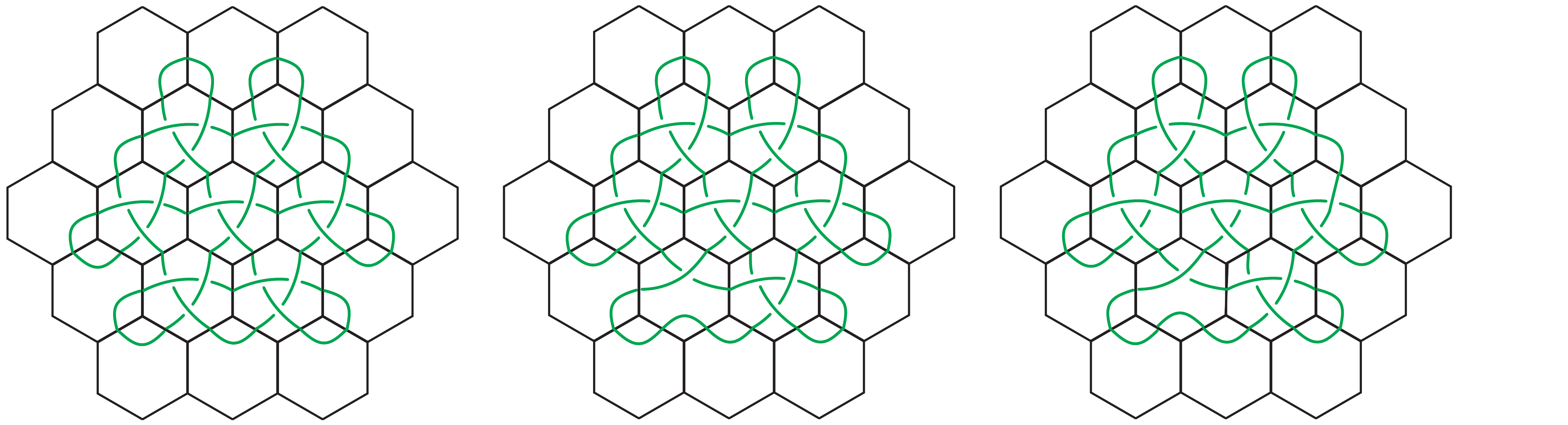}}
	\put(0.25,2.25){$L _3$}
	\put(3.3,2.25){$A_3$}
	\put(6.3,2.25){$K_3$}
	\end{picture}
    \caption{Here we see $L_3$ on the left, $A_3$ in the middle, and and $K_3$ on the right.  Because a hexagonal 3-mosaic only contains one central tile and that tile only intersects one component of $L_3$, $K_3$ is not built in the exact same way as the other examples where all the smoothings to from $K_r$ from $L_r$ can be done on central tiles($r \geq 4$).}
\label{fig:l3}
\label{fig:k3}
\end{figure}

\begin{figure}[tpb]  \centering
	\setlength{\unitlength}{0.1\textwidth}
	\begin{picture}(6.5,7.0)
	\put(0,-.057){\includegraphics[width=.37\textwidth]{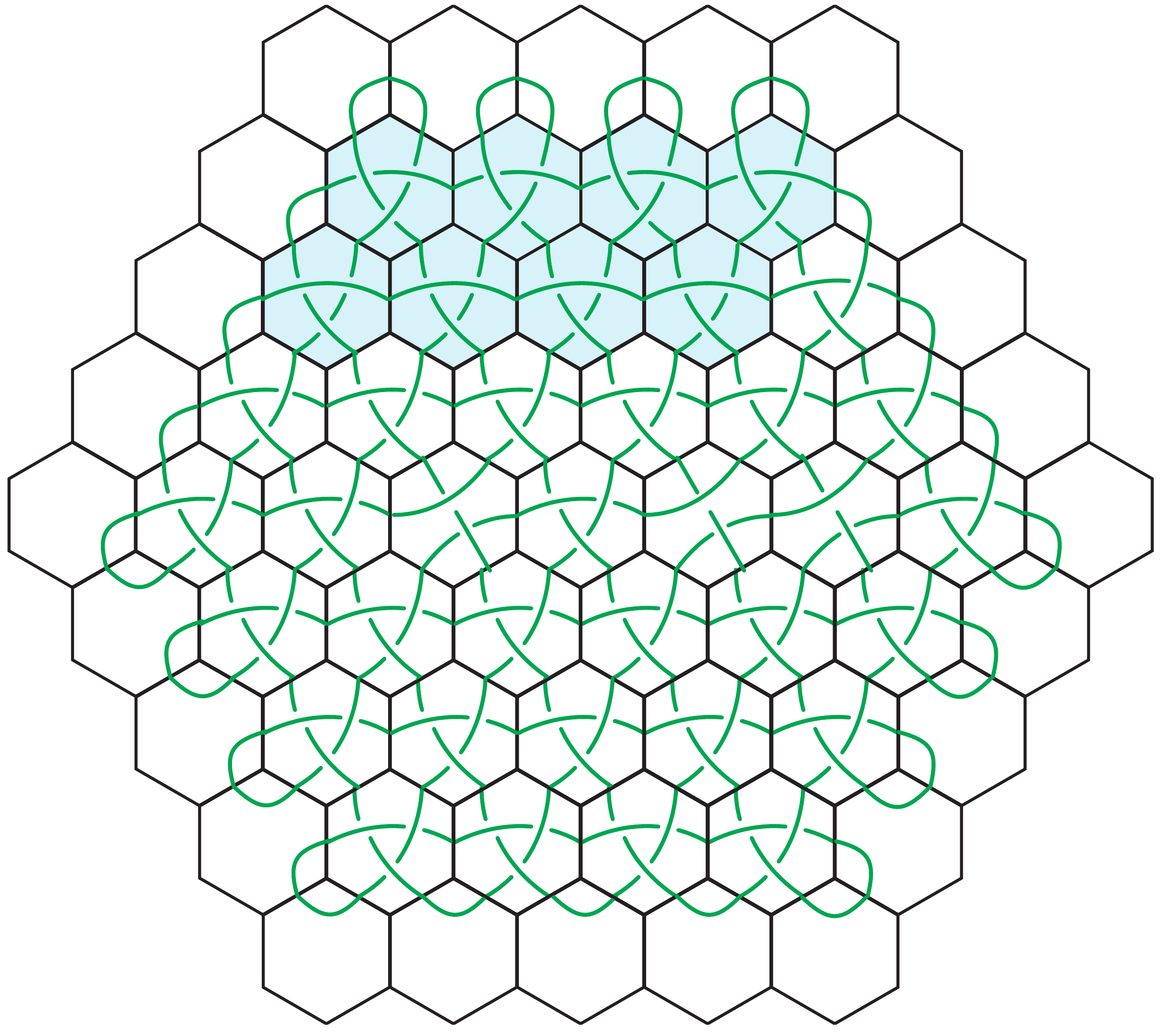}}
	\put(3.75,-0.29)  {\includegraphics[width=.42\textwidth]{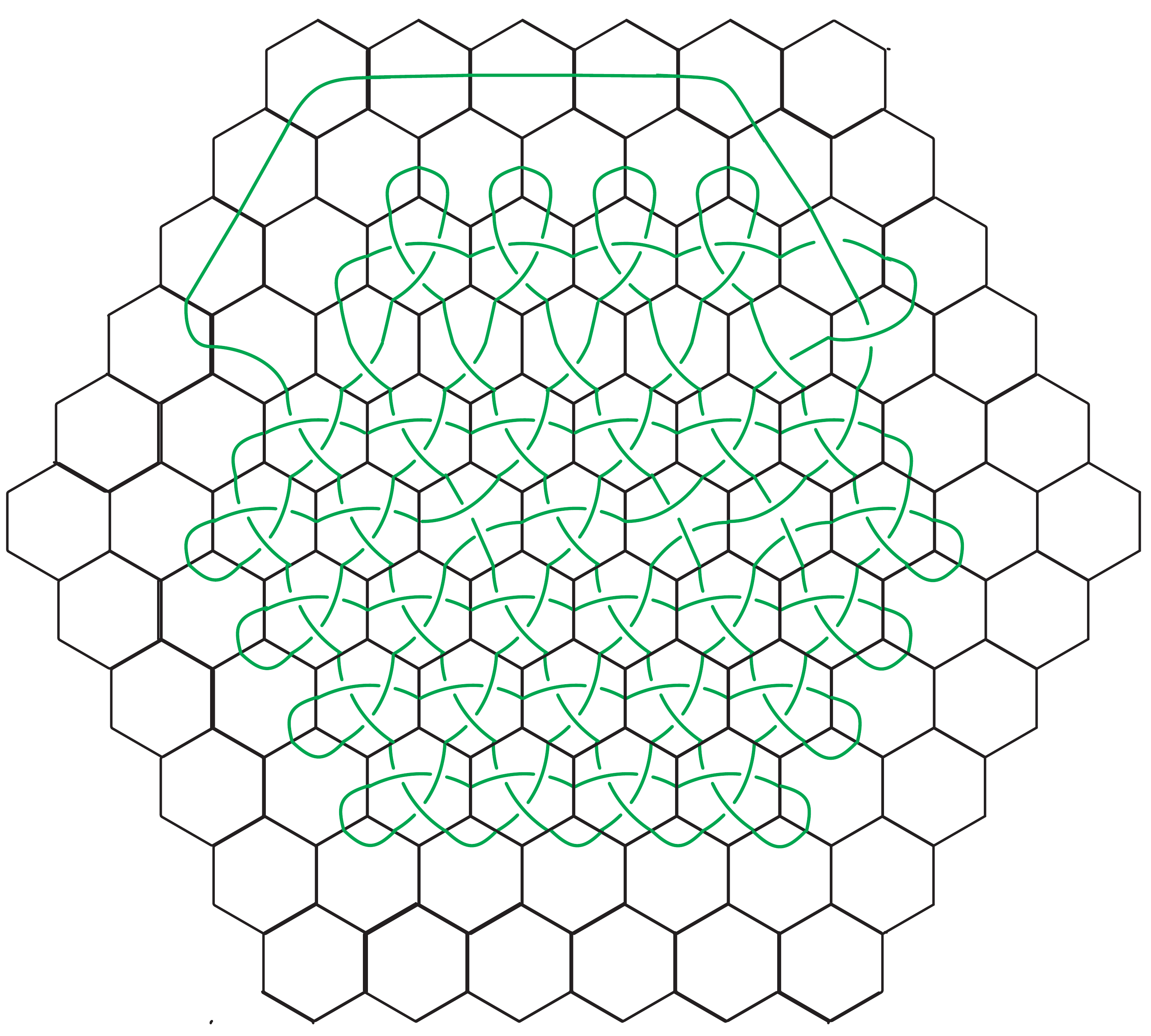}}
	\put(0,3.5){\includegraphics[width=.37\textwidth]{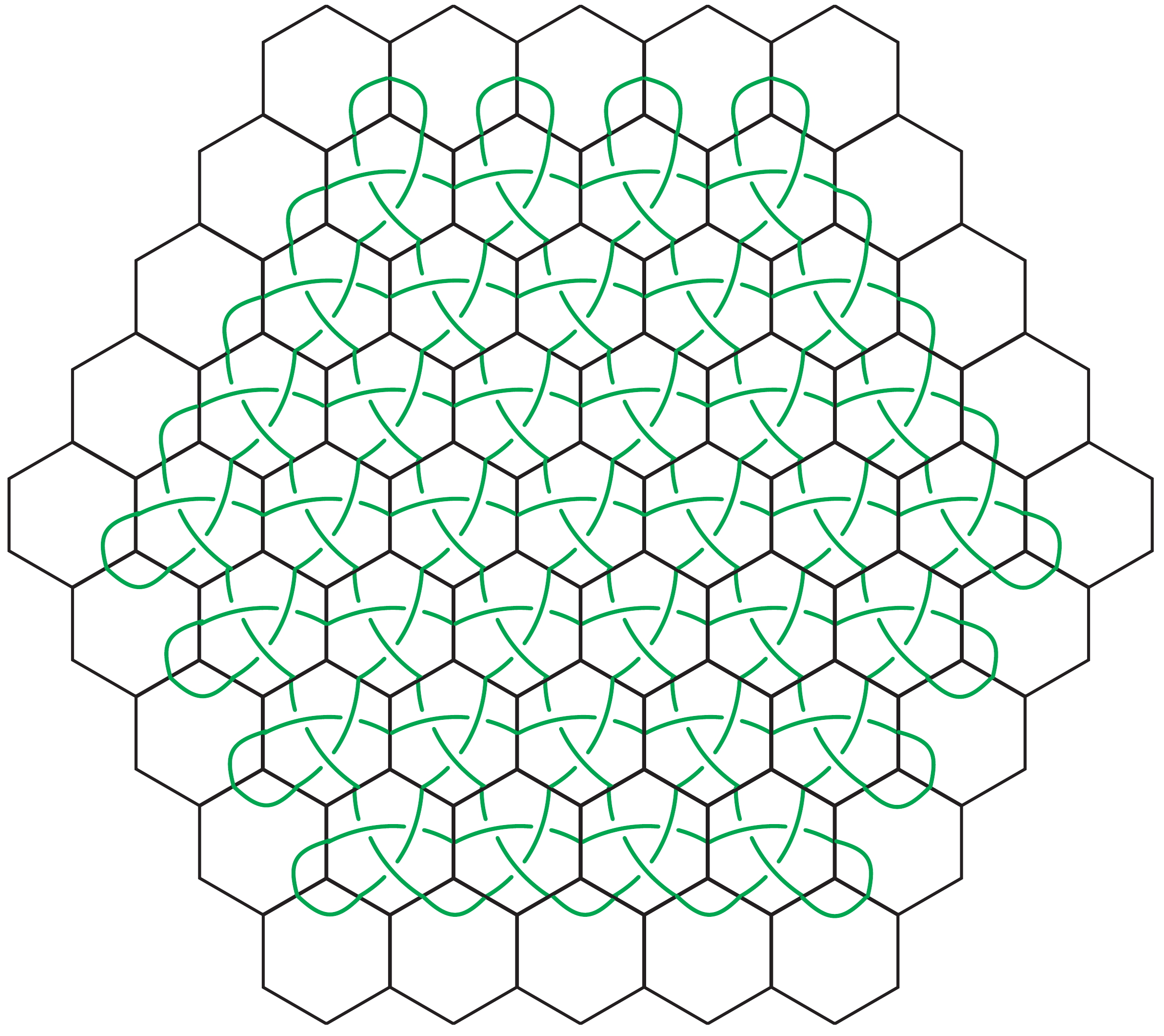}}
	\put(3.75,3.6)  {\includegraphics[width=.37\textwidth]{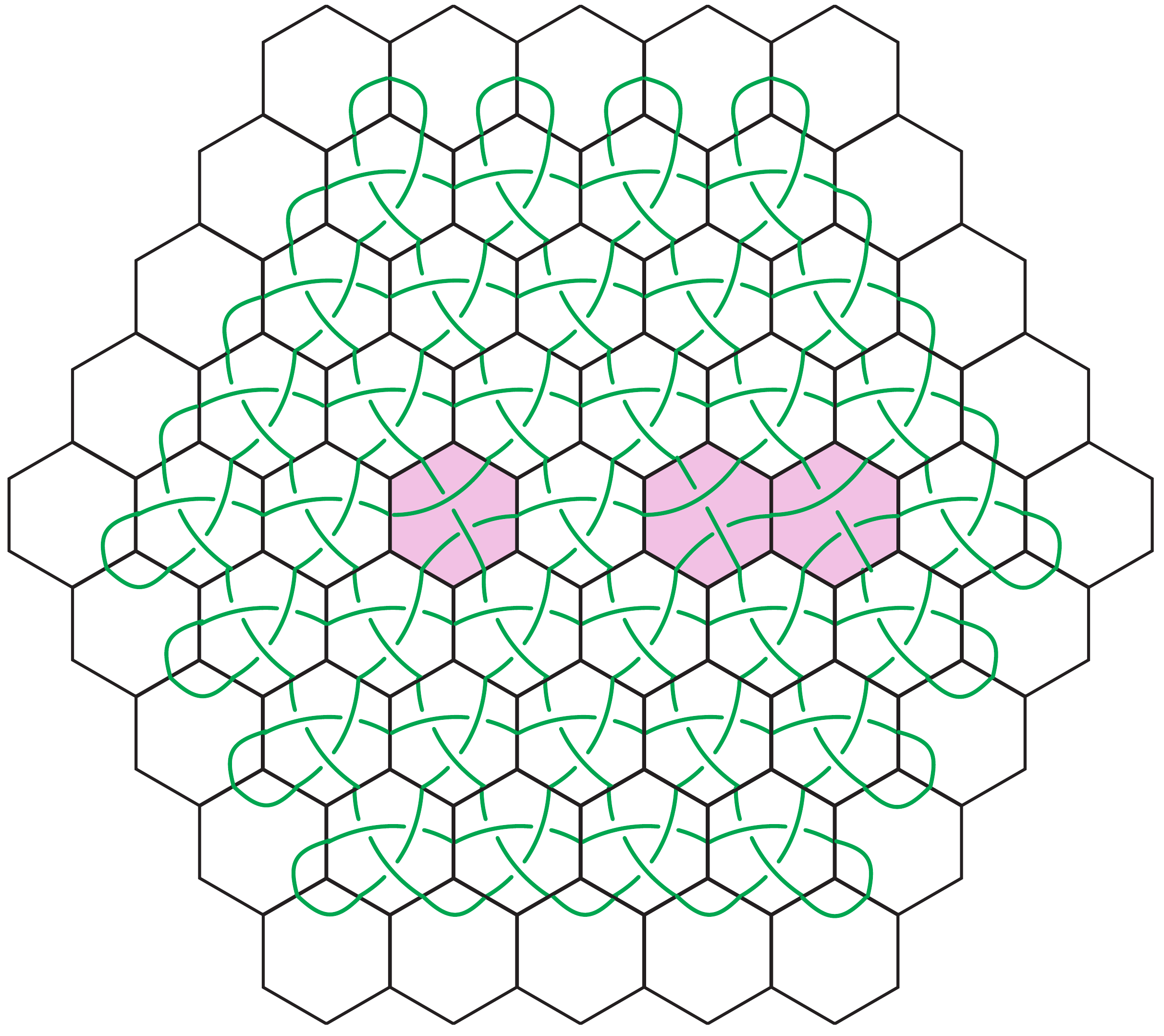}}
	\put(0.25,6.25){$L _5$}
	\put(4,6.25){$A_5$}
	\put(0.25,2.6){$K_5$}
	\put(4,2.6){$K_5$}
	\end{picture}
	\caption{Here $L_5$ (top left) is converted into an alternating knot $A_5$ (top right)
		by smoothing crossings, then the second highest horizontal strand is changed so that it can be lifted up (bottom left), and the crossings above it are changed to form $K_5$  (bottom left) so that after an isotopy of the raised arc we get a reduced alternating version of the knot. $K_5$ in that alternating form is pictured on a 6-mosaic in the bottom right.  The crossings changed to form $A_5$ from $L_5$ are shaded in pink the ones creating $K_5$ from $A_5$ are shaded in blue.  The same general strategy is used for $K_r$ for all $r \geq 4$}
	\label{fig:l5k5}
    \label{fig:k5}
\end{figure}

We define $K_3$ to be the knot shown in Figure~\ref{fig:k3} (as we will see it cannot be constructed quite in the same way as the other knots in the family). 
The process for creating $K_5$ from $L_5$ is depicted in Figure~\ref{fig:l5k5}.  The same strategy is used for $K_r$ for all $r \geq 4$.  Here the algorithm is detailed in general.
We can take the link $L_r, r \geq 4$ and form an alternating knot $A_r$ from it by removing $r-2$ of the existing central tiles (all of which are equivalent to Tile 27 in Figure~\ref{fig:tiles}) and replacing each of them with Tile 21 from Figure~\ref{fig:tiles}, a 2 crossing tile rotated to make sure $A_r$ is alternating.  We choose to replace central tiles in the lower half of the mosaic board.
The replacement is equivalent to smoothing one of the three crossings on each of these tiles.   Any time we smooth a crossing between two different components we reduce the number of components of the link by one (if both arcs of the crossing were from a single component smoothing might increase the number of components or leave the number unchanged so we avoid doing this).  By smoothing $r-2$ well chosen crossings we change $L_r$ from an $r-1$ component link into a knot.  
The transition from $L_5$ to $A_5$ is depicted in the top right mosaic in Figure~\ref{fig:l5k5} done on the tiles shaded pink.   It is not important which tiles we swap when forming $A_r$ as long as they are central tiles and thus not too close to the outside of the mosaic board, they are not in the top few rows, and they reduce the number of components when smoothed. There are plenty of choices for crossings to smooth to form $A_r$, but we note that  the collection of knots one could choose for $A_r$ defined here is a proper subset of the collection options for $A_r$ in \cite{hll} because in that paper there was no reason to disallow smoothing of crossings that were in the upper few rows of tiles of the mosaic board.

Next we form $K_r$. To do this we take $A_r$ and look at the second highest horizontal arc in the mosaic.  
It is fully contained in the third highest row of tiles in the mosaic.
  Fix the crossing all the way on the right, but change every other under-crossing of that strand into an over-crossing as seen in the bottom left mosaic board in Figure~\ref{fig:l5k5}.  Finally, we reverse all the crossings of the mosaic that lie above that horizontal arc to yield $K_r$ (these two sets of changes are shown shaded in blue in the bottom left in Figure~\ref{fig:l5k5}).

   Note that $K_r$ is an alternating knot, but it is not initially in an alternating diagram.  
$K_3$ is depicted in Figure~\ref{fig:k3} and $K_5$ in  Figure~\ref{fig:l5k5}.  
  While we prove in Theorem~\ref{thm:fit} that for all $r \geq 3$, $K_r$ cannot be embedded in a reduced alternating form on an $r$-mosaic, looking at this figure it is not hard to see that like $K_5$ each $K_r$ does fit on an $r+1$-mosaic while reduced and alternating.  
In Figure~\ref{fig:k5} we show $K_5$
both in its original diagram on a 5-mosaic and after lifting the strand to make it alternating on a 6-mosaic.
$K_3$ is also depicted in an alternating diagram in Figure~\ref{fig:k3dual}.  In general for $r \geq 4$, $K_r$ is always constructed so that the second highest horizontal strand can be moved up to yield an alternating diagram while decreasing the number of crossings by $2r-2$.  The knot diagram (ignoring the mosaic on which it is embedded) found by lifting the strand to find an initial alternating embedding of $K_r$ (shown for $K_3$ and $K_5$ in Figures~\ref{fig:k3dual} and \ref{fig:k5}  respectively) is called the \emph{standard alternating embedding of $K_r$}.

The number of choices for valid crossings to smooth increases as $r$ increases and as a result $K_3$ is the only knot in the family that cannot be formed using this construction. The obstacle for $K_3$ is that in $L_3$ there is only one central tile and only a single component enters it so one cannot smooth a crossing in a central tile of $L_3$ to form a knot, but for every other $K_r$, 
all the link components cross each other in the central tiles on the lower half of the board and
the smoothings can be done strictly in those tiles.  Because the number of central tiles relates to $r^2$, but the necessary number of smoothings is less than $r$, we have more and more good choices for crossings to smooth to form $K_r$ as $r$ goes up.  Thus as $r$ gets large there are many smoothing choices that would lead to a valid choice for $A_r$ and in turn $K_r$.

\section{The Complement of a Hexagonal Mosaic and the algorithm to construct Mosaic $L'$ from $L$.}
\label{sec:lp}

In \cite{hk}  the complement for a rectangular mosaic was introduced.  In \cite{hll} the idea was extended to hexagonal mosaics and we use it again here.
 The complement is only defined on the interior tiles.  Let $L$ be an embedding of a link on a hexagonal mosaic. Examine the interior tiles. On each of these tiles we pick arcs for the complement based on the arcs of $L$ already on that tile.  The choice for the arcs of the complement is dictated in Figure~\ref{fig:comp}.  
One simply replaces each interior tile from Figure~\ref{fig:tiles} with a corresponding tile from Figure~\ref{fig:comp} that contains the same arcs of the link, but also include arcs of the complement.  Looking at the tiles in Figure~\ref{fig:comp} one can see that the
 complement is defined so that on each interior tile it and the link together hit each the tile's 6 possible connection points (the midpoints of the hexagonal sides) exactly once, the arcs of the complement never cross each other, and if they cross an arc of $L$ the arc of the complement always goes under the arc of $L$.  Note that as the figure shows the complement is not always unique.  There are four tiles where there are a pair of choices one could make for the complement. These are the tiles that $L$ hits in at most one arc and thus the ones that contain at least two arcs of the complement (tiles 1 through 4 in Figure~\ref{fig:tiles}).  All the tiles with non-unique complement are depicted up to rotation in the top row of Figure~\ref{fig:comp}.  The tiles where the complement is uniquely determined (although perhaps empty) make up the other three rows.

\begin{figure}[tpb]
\centering
\includegraphics[width=.63\textwidth]{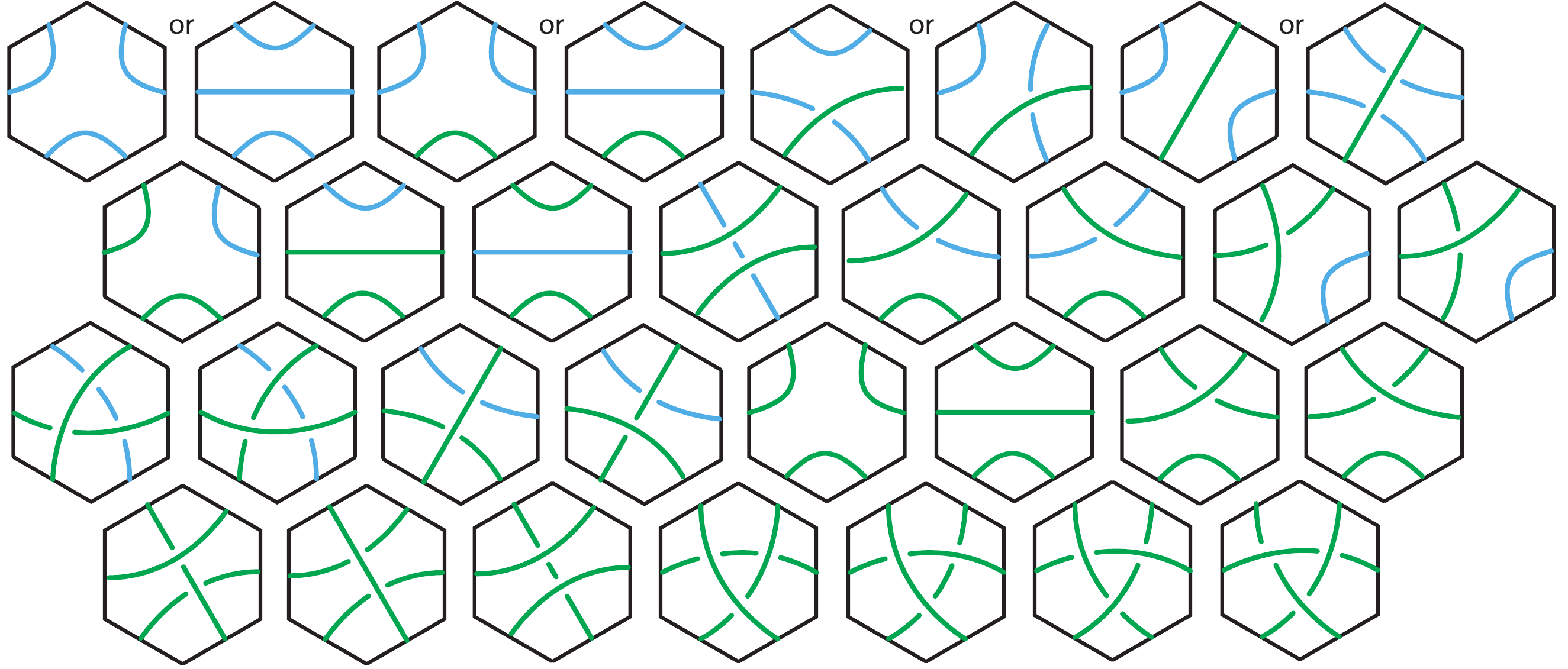}
\caption{The complement is in blue for green arcs of a mosaic.  When the link intersects the tile in a single arc there are usually two choices for the complement (pictured in the first three pairs).  Note that the complement is trivial on the 11 tiles that contain 3 arcs of the link since they already hit all 6 connection points for the hexagonal tile.}
\label{fig:comp}
\end{figure}

The complement is completely contained on the interior of the mosaic and while the original link, of course, is made up of a union of loops, the complement will consist of a (possibly empty) union of loops and arcs which are properly embedded on the interior of the mosaic.

\bigskip

\emph{Constructing Mosaic $L'$:}
Given any hexagonal $r$-mosaic  $L$  we define a mosaic $L'$ as follows.
\label{lemmalprime}
Let $L$ be a mosaic and let the set of arcs in the complement of $L$ be $\{a_1, a_2, \dots a_n \}$
and let the loops of the complement be $\{c_1, c_2, \dots c_j \}$.  Each arc $a_i$ divides the interior of $L$ into two pieces. Define the smaller piece to be the outside of $a_i$ and the larger piece to be the inside of $a_i$.  In our applications we will have a bound on the the length of the arcs of the complement which will dictate that every arc will have a well defined inside and outside.  Looking at $a_1$ and $a_2$ in the mosaic $L$ on the top left of Figure~\ref{hat} $a_1$ is outside of $a_2$.  We note that in the study of incompressible surfaces in the absence of loop $c_1$ both $a_1$ and $a_2$ would be considered outermost on the interior tiles since they each would have one side on which they were disjoint from the other arcs of the complement, but in our context $a_2$ is not considered outermost since each arc is defined to have a unique outside.

To form $L'$ take the interior arcs of $L$ (its intersection with the interior tiles), add the entire set of loops of the complement as well as arc $a_1$ to form the interior arcs of a new  link.  This defines the interior tiles of a new mosaic $L^1$ with at least as many crossings as $L$ (more if $a_1$ or any of the loops of the complement crossed $L$).  Now that the interior of $L^1$ is specified there are, as always in the standard setting, two ways in which it can be connected to itself in the boundary tiles.  One way will leave the boundary tiles outside of $a_1$ fixed as they were in $L$ and shift the connections in the boundary tiles on the inside of $a_1$.  The other way will shift the boundary tiles outside of $a_1$, but will fix the tiles on the inside of $a_1$.  The mosaic formed by the latter shift is called $L^1$.  See the top two mosaics in Figure~\ref{lp} for an example of the formation of $L^1$ from $L$.  In the figure the link consists of green and gray arcs and the complement is always in blue (gray is simply used to indicate the portions of the link which are changed relative to the previous picture).  Note that $L^1$ now has complement $\{a_2, a_3, \dots a_n \}$.  We have removed one arc from the complement and all of the loops.   Now form $L^2$ by adding $a_2$ to the interior arcs of $L^1$ in the same manner as we previously added $a_1$ to $L$.  $L^2$
has complement $\{a_3, a_4, \dots a_n \}$.  Repeat until we have no more arcs in the complement.  Call the final result $L'$.  
 See the first three mosaics in Figure~\ref{lp} for an example of forming $L'$ from a mosaic $L$. 
Since there are no arcs of the complement left after our construction, $L'$  must be saturated.

\begin{figure}[!ht]
   \centering
   \setlength{\unitlength}{0.1\textwidth}
   \begin{picture}(7,7)
    \put(0,.05){\includegraphics[width=.75\textwidth]{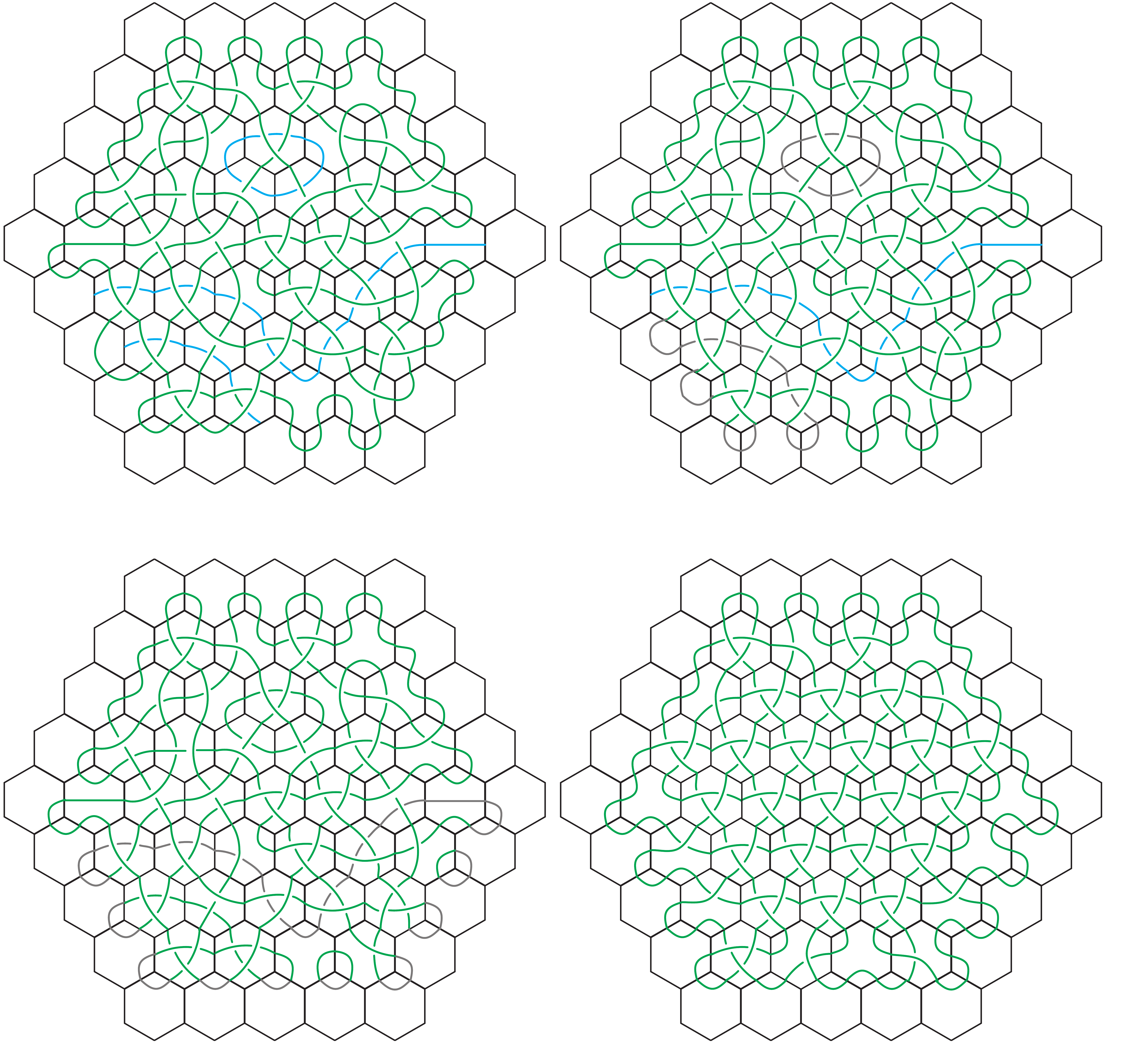}}
    
     \put(0.3,6.4){$L$}
     \put(4.0,6.4){$L^1$}
     \put(-.20,2.8){$L^2=L'$}
     \put(4.0,2.8){$\widetilde{L}$}
     \put(4.1,5.1){{\scriptsize $a_2$}}
     \put(5.4,4.15){{\scriptsize $a_1$}}
     \put(1.7,4.15){{\scriptsize $a_1$}} 
     \put(0.4,5.1){{\scriptsize $a_2$}}
     \put(1.37,6.1){{\scriptsize $c_1$}}
     \put(3.05,5.33){{\scriptsize $B$}}
     \put(0.75,5.2){{\scriptsize $C$}}
     \put(1.05,4.3){{\scriptsize $F$}}
     \put(2.6,5.45){{\scriptsize $A$}}
     \put(1.57,6.5){{\scriptsize $J$}}
     \put(1.95,6.55){{\scriptsize $I$}}
     \put(2.3,6.5){{\scriptsize $D$}}
     \put(2.552,6.15){{\scriptsize $G$}}
     \put(2.8,5.7){{\scriptsize $H$}}
     \put(2.75,5.05){{\scriptsize $M$}}
     \put(2.64,4.78){{\scriptsize $K$}}
     \put(2.33,4.35){{\scriptsize $E$}}   
     \put(3.0,1.66){{\scriptsize $B'$}}
\put(0.73,1.5){{\scriptsize $C'$}}
\put(1.05,0.65){{\scriptsize $F'$}}
\put(2.6,1.8){{\scriptsize $A'$}}
\put(1.54,2.85){{\scriptsize $J'$}}
\put(1.95,2.88){{\scriptsize $I'$}}
\put(2.3,2.85){{\scriptsize $D'$}}
\put(2.552,2.5){{\scriptsize $G'$}}  
\put(2.7,2.05){{\scriptsize $H'$}}
\put(2.75,1.4){{\scriptsize $M'$}}
\put(2.65,1.1){{\scriptsize $K'$}}
\put(2.31,0.67){{\scriptsize $E'$}}   
     \put(6.7,1.66){{\scriptsize $\widetilde{B}$}}
\put(4.43,1.34){{\scriptsize $\widetilde{C}$}}
\put(4.7,0.65){{\scriptsize $\widetilde{F}$}}
\put(6.18,1.84){{\scriptsize $\widetilde{A}$}}
\put(5.24,2.83){{\scriptsize $\widetilde{J}$}}
\put(5.65,2.85){{\scriptsize $\widetilde{I}$}}
\put(6.0,2.85){{\scriptsize $\widetilde{D}$}}
\put(6.22,2.5){{\scriptsize $\widetilde{G}$}}  
\put(6.4,2.18){{\scriptsize $\widetilde{H}$}}
\put(6.35,1.4){{\scriptsize $\widetilde{M}$}}
\put(6.2,.99){{\scriptsize $\widetilde{K}$}}
\put(6.01,0.8){{\scriptsize $\widetilde{E}$}} 
     \put(1.7,0.45){{\scriptsize $a_1$}} 
\put(0.35,1.45){{\scriptsize $a_2$}}

   \end{picture}
\caption{Given a link $L$
the arcs of the complement are drawn in blue and labeled $a_1$ and $a_2$ and the loop of the complement is labeled $c_1$.  
 We move $c_1$ from the complement to the link and add in arc $a_1$ to form $L^1$, then we add $a_2$ to $L^1$ to form $L^2=L'$.  Finally we show $\widetilde{L}$ which will be constructed out of $L'$ in the proof of Theorem~\ref{thm:none}.  The complement is always drawn in blue, but 
as arcs are changed or moved from the complement to the link we initially draw them in gray instead of green to help track the alterations from the previous figure. 
 We have also labeled many individual tiles that we refer to later.}
\label{lp}
\label{hat}
\end{figure}

\section{The Dual Graph for a link diagram and the exterior degree $\Delta$}

In this section we introduce a tool that we will use both to systematically find all possible flypes for a diagram of a link (the link need not be a mosaic link) and to analyze properties of the complementary regions to the diagram.   The Tait flyping conjecture states that all  reduced and alternating diagrams of a prime knot in $S^3$ can be obtained from each other by a sequence of flypes \cite{MT}.  
In a complicated diagram of a knot, however, it is easy to overlook a potential flype.  The Perko Pair is perhaps the most famous example of an overlooked flype.  The two diagrams appeared listed as distinct knots in knot tables including Conway's and Rolfsen's \cite{Rolfsen} and it was not until 1973 that Perko discovered that they were the same knot shown in two diagrams that differed by a flype \cite{Perko}.  We see a flype overlooked in an even more recent example in the paper that inspired this one \cite{L} where an overlooked flype requires an added argument to complete their proof.  In that case it is a minor error and the resulting gap in the proof of the theorem would be easy to fix, but it further points to the need for a systematic way to detect all flypes in a link diagram.  The dual graph developed in this section solves this issue.

Given  a diagram of a link, knot theorists usually record crossing information
so one can recreate the link, but at times we refer to the link projection and do not worry about over and 
under crossings.  The projection is a 4 valent graph on a 2 sphere (or in $\R^2$).  The projection then has a natural planar \emph{dual graph} $D$ which has a vertex in each region of the projection's complement and an edge connecting any pair of vertices that are separated by an edge of the projection graph.  While the complement of a hexagonal mosaic is a relatively new tool the dual graph is, of course, an old and well known concept in graph theory.  See Figure~\ref{fig:k3dual} for the dual graph of the standard alternating diagram of $K_3$.

The dual graph $D$ need not be 4-valent.  The degree of a vertex is equal to the number of edges of the projection which surrounds the region to which that vertex corresponds. Note that an inverse exists taking you from the dual graph uniquely to the projection from which it came, but the crossing information from the original diagram is lost (this is fine for our purposes).

Given $L$, a diagram of a link in the plane, and its dual graph $D$, we define the vertex corresponding to the infinite region to be the \emph{exterior vertex}.  It will usually be labeled  $v_1$.  
We define the degree of the exterior vertex to be the \emph{exterior degree} which we denote $\Delta(L)$.  Note that the exterior degree will be exactly the same as the number of sides of the outer polygon used in Ludwig, Evans and Paat's proof in \cite{L}. The dual graph in Figure~\ref{fig:k3dual} has exterior degree 7.  The \emph{maximal degree} of the  graph is defined to be the degree of the graph's highest degree vertex.  The exterior degree is, of course, bounded from above by the maximal degree.  We will show for $K_r$ that the highest exterior degree we can achieve in any reduced, alternating embedding is achieved in the standard alternating diagram (and that the maximal degree equals the exterior degree in those embeddings).

\begin{figure}[tpb]
\centering
\includegraphics[width=.75\textwidth]{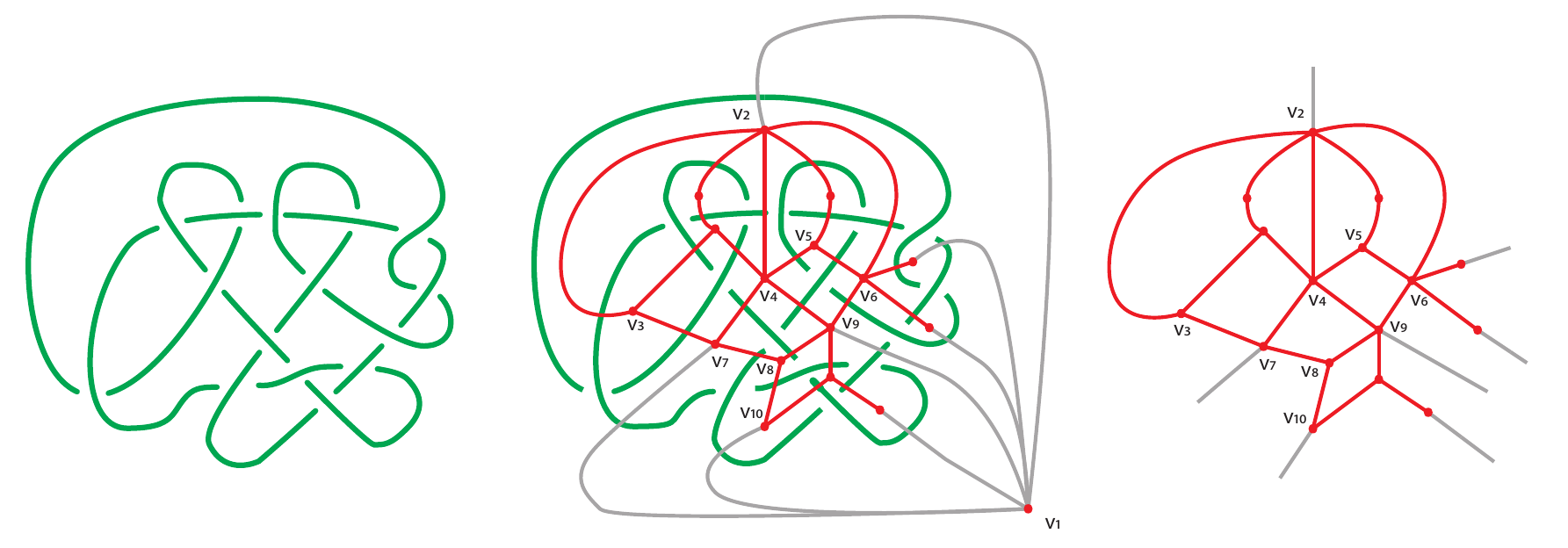}
\caption{The standard alternating embedding of $K_3$ is pictured left.  The dual graph is pictured center.  Because the degree of the exterior vertex ($v_1$ here) gets quite large for $K_r$ as $r$ increases, we usually draw the start of the edges running to the exterior vertex in gray, but omit the actual vertex to simplify the picture as seen on the right for $K_3$.}
\label{fig:k3dual}
\end{figure}

Tait's flyping conjecture is naturally set on $S^2$, not in $\R^2$, but mosaics are usually thought of as living in the plane so at times it will be convenient to think in one context and at times the other.  In $S^2$, of course, there is no exterior vertex so for diagrams on $S^2$ we simply choose a vertex of maximal degree to be the exterior vertex.

In a flype of a reduced, alternating diagram of a prime knot there are several circles that intersect the link diagram in four points that correspond to 4-cycles in the dual graph as seen in Figure~\ref{fig:flype}.  One circle bounds a disk whose interior contains just the crossing that is to be moved to the other side of the tangle by the flype (corresponding to the cycle $v_1$, $v_2$, $v_5$, $v_4$ in the graph on the left in Figure~\ref{fig:flype}), one bounds a disk that contains the tangle that is to be flipped over ($v_2$, $v_3$, $v_4$, $v_5$ in the same graph), and one bounds a disk that contains both the crossing and the tangle ($v_1$, $v_2$, $v_3$, $v_4$ again in the same graph).  The union of the three corresponding 4-cycles always contains a total of 5 vertices and they are called \emph{nested 4-cycles}. The cycle bounding the disk containing both the crossing and the tangle is called the
  \emph{outer 4-cycle}  and the other two are called \emph{inner 4-cycles}.

Note that one might worry that a priori it is possible that either $v_1$ and $v_3$ or $v_2$ and $v_4$ actually lie in the same region of the projection complement and thus should not represent distinct vertices.  For a reduced link on $S^2$, however, the only way this can happen is if the link has no crossings except the ones involved in the flype as in Figure~\ref{fig:isot}.  In that case the new
diagram of the knot is equivalent to flipping over the entire diagram of the knot.  While this technically may be a new diagram  it is not different in any interesting way from the original so we call this a trivial flype and only focus on all other types of flypes.

\begin{figure}[tpb]
\centering
\includegraphics[width=.51\textwidth]{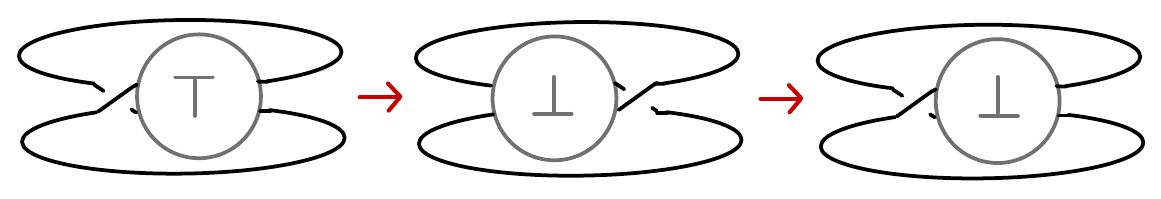}
\caption{Here a flype takes us from left to center, but an isotopy moves the crossing back to the original side of the tangle yielding the picture on the right.  This is equivalent to flipping over the entire link diagram and is considered a trivial flype.}
\label{fig:isot}
\end{figure}

 In a non-trivial flype one of the two inner 4-cycles will bound a disk in $S^2$ whose interior is disjoint from the rest of the dual graph
($v_1, v_2, v_5, v_4$ for the dual graph on the left in Figure~\ref{fig:flype}).  
We will call any 4-cycle such as $v_1, v_2, v_5, v_4$ which bounds a disk in $S^2$ (or in $\R^2$ if the diagram is in the plane) whose interior is disjoint from the rest of the dual graph an \emph{empty 4-cycle}. We see that for a reduced alternating diagram every non-trivial flype will require one empty (inner) 4-cycle and two more 4-cycles, one non-empty inner 4-cycle and one non-empty outer 4-cycle.

\begin{figure}[tpb]
\centering
\includegraphics[scale=.4]{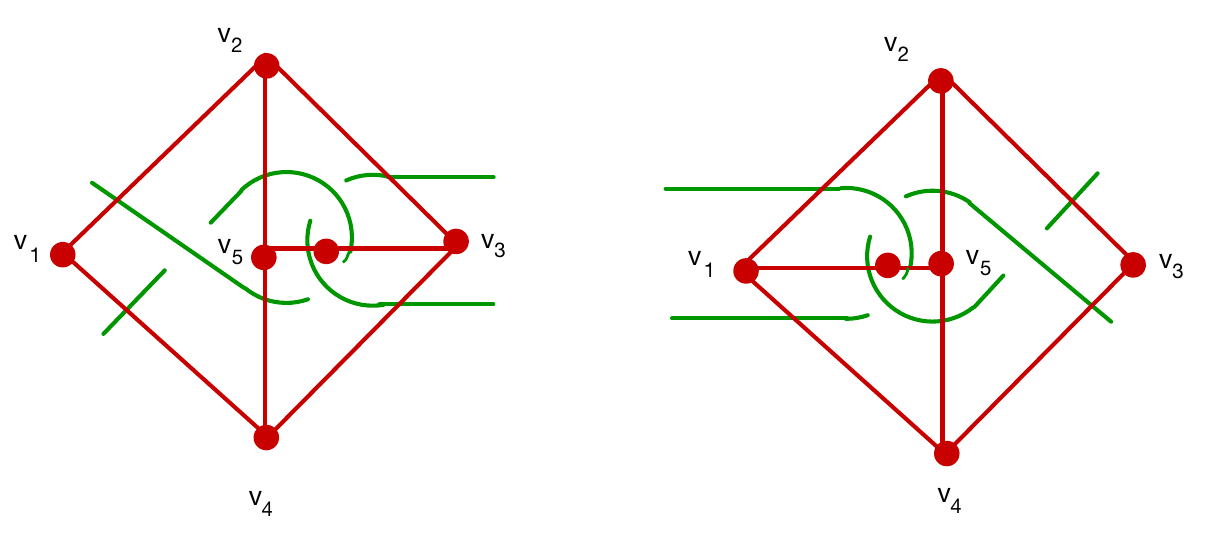}
\caption{Because flypes are simple moves the influence on the dual graph is always predictable. For every possible flype for $K_r, r \geq 4$ a subgraph of the dual graph like the one on the left is always replaced by a sugbraph like the one on the right  - the subgraphs are isomorphic to each other, but the degrees of some vertices do change. $K_3$ is different from the other $K_r$'s  with one slightly more complicated flype available.}
\label{fig:flype}
\end{figure}

If before the flype the empty inner cycle contains vertices $v_1, v_2, v_5, v_4$ and the non-empty inner cycle contains $v_2, v_3, v_4, v_5$  then after the flype they are reversed and the empty inner cycle contains vertices $v_2, v_3, v_4, v_5$ and the non-empty inner cycle contains $v_1, v_2, v_5, v_4$. Note that the flype always causes the degree of the vertex corresponding to $v_1$ 
 in the figure to go up and the degree of the degree of the vertex corresponding to $v_3$ to go down.  One can also easily show that all five vertices in the 4-cycles must have degree at least 3 so when searching for 4-cycles corresponding to flypes we can ignore vertices of degree 2 making the process even faster.

Thus for reduced, alternating diagrams of links we can systematically find all flypes by searching for nested 4-cycles in the dual graph  (the test might yield a set of nested 4-cycles that do not correspond to a flype, but the crucial thing is that every flype will yield a set of nested 4-cycles).  Since every set of nested 4-cycles contains non-empty 4-cycles which are in turn rare in dual graphs we search for non-empty 4-cycles and this usually gives a short list that is easy to check for corresponding flypes. 
We search the dual graph $D$ for all non-empty 4 cycles containing $v_1$ then search  $D - v_1$ for all such 4-cycles containing $v_2$ and so on. We apply this search in the proof of Claim~\ref{claim:highest} finding that we have no non-empty 4-cycles in $D - (v_1 \cup v_2)$ showing there are no nested 4-cycles away from $v_1$ and $v_2$ and thus every flype corresponds to a cycle containing at least one of those two vertices.

\section{Proof that $K_r$ cannot be embedded on a hexagonal $r$-mosaic while achieving crossing number.}
In this section we extend the main result of \cite{L} from rectangular mosaics to standard hexagonal mosaics.  First we establish some useful results. Lemma~\ref{lemma:comp} was proven in \cite{hll} and is also a direct result of the main theorem in \cite{jmm}.   Claims~\ref{lr} and \ref{claimlr}
are easy to prove (see \cite{hll}).

\label{sec:lud}

\begin{lemma}
Saturated links on a standard hexagonal $r$-mosaic  (including $L_r$ as well as the other saturated options)
 have $r-1$ components for $r \geq 2$.  
 \label{lemma:comp}
\end{lemma}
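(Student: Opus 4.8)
The number of components of a link depends only on the shadow of a diagram, so over/under information may be ignored throughout; in particular the hypothesis ``not alternating'' becomes automatic. The first step is to pin down the shadow of a saturated interior. Three chords of a hexagon are pairwise crossing precisely when their six endpoints alternate around the boundary, and this forces the three ``long'' pairings; hence every $3$-crossing tile (tiles $23$--$26$ of Figure~\ref{fig:tiles}) has the same shadow, namely three strands joining the midpoints of the three opposite pairs of edges. Consequently the interior shadow of \emph{any} saturated $r$-mosaic is one and the same picture: the three families of straight grid-lines of the hexagonal tiling, one family per edge-direction. A $120^{\circ}$ rotation of the board cyclically permutes the three directions, and in the ``horizontal'' direction there is exactly one maximal line running through each horizontal row of tiles of the interior; the interior is the hexagon of coronas $0,\dots,r-2$, which has $2r-3$ such rows, so each family has $2r-3$ lines and there are $3(2r-3)$ interior lines in all. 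Each line leaves the penultimate corona into the crossing-free boundary corona through both of its ends, and these ends are exactly the connection points a saturated boundary must use, so the boundary tiles glue the $6(2r-3)$ line-ends together in a fixed-point-free matching; there are the two completions of the paper to consider.

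With this set up, the link is obtained by following a line straight across, turning at the boundary onto another line, and repeating. The claim to establish is that the components are \emph{nested hexagons}: for $1\le j\le r-2$ the $j$-th component is assembled from the $j$-th line counted inward from either side in each of the three directions, joined up around the boundary, while the $(r-1)$-st (innermost) component is assembled from the three central lines through the center tile. Granting this, each component uses exactly two lines from each of the three families except the innermost, which uses exactly one from each, so if $n$ is the number of components then $6(n-1)+3=3(2r-3)$, i.e. $n=r-1$. As a consistency check, for $r=3$ this says the center tile meets only the innermost of the two components, exactly as noted in Section~\ref{sec:fam}, and for $r=2$ it gives the one-component trefoil of Figure~\ref{fig:trefoil}.

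The work, and the main obstacle, is the verification of the nesting, which I would carry out by a local analysis at the boundary corona using the crossing-free tiles $1$--$5$: along a flat side of the hexagonal board a line that leaves the interior is returned, by the boundary arcs, to the neighbouring line of the \emph{same} family and same rank, while at each of the six corners of the board a line is handed off to the line of the next family of the same rank; assembling these local moves shows that starting from, say, the $j$-th horizontal line one returns to it only after passing once through the $j$-th line of each family, which is precisely one nested hexagon. This must be checked for each of the two completions (the second differs from the first essentially by re-indexing the nesting, and for $L_r$ it is the one producing the nugatory crossings), and one must also confirm that neither completion produces a stray circle lying entirely in the boundary corona. An alternative packaging of the same content is an induction on $r$: deleting the boundary corona of a saturated $r$-mosaic and re-completing at corona $r-2$ yields a saturated $(r-1)$-mosaic link with $r-2$ components by the inductive hypothesis, and the local analysis above shows that the two outer coronas contribute exactly one further closed curve, namely the outermost nested hexagon, giving $n=(r-2)+1=r-1$.
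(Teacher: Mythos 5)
Your proposal follows essentially the same route as the paper's proof: reduce to the shadow, observe that every saturated interior is three families of $2r-3$ parallel straight lines, note that exactly one component uses three lines (one per direction) while every other component uses six (two per direction), and solve $3+6(n-1)=3(2r-3)$ for $n=r-1$, checking both boundary completions. The only difference is presentational — you flag the nesting verification as the step requiring a local boundary analysis (or induction on coronas), whereas the paper disposes of it by inspection of the figures — so the two arguments are substantively identical.
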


Note that  in $L_r$ for $r \geq 4$ because of the shape of the link components described above  every given pair of components  crosses each other on the central tiles in the lower half of the mosaic, which makes it easy to find crossings to smooth when forming $A_r$.

\begin{claim}
Given a hexagonal $r$-mosaic, $r \geq 2$, there are $3r^2-9r+7 $ interior tiles and $L_r$ 
has $9r^2-27r+21$ crossings in the standard embedding.  
\label{lr}
\end{claim}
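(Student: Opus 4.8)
The plan is to count the interior tiles of a hexagonal $r$-mosaic by a corona-by-corona tally and then use the structure of the saturated mosaic $L_r$ (every interior tile is tile $26$, a three-crossing tile) to convert the tile count into a crossing count. First I would establish the interior tile count. A hexagonal $r$-mosaic is built from the center tile together with coronas $1,2,\dots,r-1$; the $t^{\text{th}}$ corona contains $6t$ tiles (one checks the first few cases: $6,12,18,\dots$ against Figure~\ref{fig:int}). The boundary tiles are exactly the $(r-1)^{\text{st}}$ corona, so the interior tiles are the center tile plus coronas $1$ through $r-2$. Hence the number of interior tiles is
\[
1+\sum_{t=1}^{r-2}6t \;=\; 1+6\cdot\frac{(r-2)(r-1)}{2}\;=\;1+3(r-2)(r-1)\;=\;3r^2-9r+7,
\]
which gives the first assertion. (A sanity check: $r=2$ gives $1$ interior tile, matching the single crossing tile of the $2$-mosaic in Figure~\ref{fig:trefoil}; $r=3$ gives $7$, and $r=4$ gives $19$, both consistent with Figures~\ref{fig:l3} and \ref{fig:l4}.)

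Next I would handle the crossing count for $L_r$. The subtlety is that a ``crossing'' in a three-crossing tile is shared along edges with neighboring tiles only through connection points, not through crossings themselves — crossings lie in the tile interiors — so crossings are not double-counted across tiles. Thus if every interior tile carried three crossings with no cancellation, the naive count would be $3(3r^2-9r+7)=9r^2-27r+21$. So the real content is to verify that $L_r$, as defined (the connection of the boundary that yields a link with \emph{no} nugatory crossings), genuinely realizes all $3\cdot(\#\text{interior tiles})$ crossings and that none of them are reducible away — but the claim only asserts the number of crossings \emph{in the standard embedding}, i.e. as drawn on the mosaic, so it suffices to observe that in the standard embedding each interior tile is tile $26$, each such tile contributes exactly three crossings, these crossings are pairwise distinct and occur in distinct tiles, and the boundary tiles (tiles $1$ through $5$) contribute none by definition. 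Therefore the total is $3(3r^2-9r+7)=9r^2-27r+21$.

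I expect the main obstacle to be purely bookkeeping: rigorously justifying that the $t^{\text{th}}$ corona has exactly $6t$ tiles (the ``corner'' tiles of each corona, discussed in the definitions section, need to be counted carefully so that adjacent sides of the hexagonal corona are not over- or under-counted at the six corners), and confirming that the boundary is precisely the $(r-1)^{\text{st}}$ corona so the interior tiles are exactly the center plus coronas $1,\dots,r-2$. Once the $6t$ formula is in hand the arithmetic is immediate via the triangular-number identity $\sum_{t=1}^{n}t=n(n+1)/2$ with $n=r-2$. No deep input is needed beyond the definitions of corona, boundary tile, interior tile, saturated mosaic, and the fact (visible from Figure~\ref{fig:tiles}) that tile $26$ carries three crossings; in particular Lemma~\ref{lemma:comp} is not required here, though it provides an independent consistency check on the picture of $L_r$.
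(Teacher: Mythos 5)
Your proposal is correct and is exactly the computation the paper has in mind: the interior tiles form a centered hexagonal array (center plus coronas $1$ through $r-2$, giving $1+3(r-2)(r-1)=3r^2-9r+7$), and multiplying by three crossings per interior tile gives $9r^2-27r+21$. The paper simply cites this as a well-known centered-hexagonal-number computation, so you have just filled in the same argument in more detail.
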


\begin{claim}
Let $L_r$ have crossing number $s_r$. We observe $s_r = 9r^2-27r+21$ for $r \geq 3$.
\label{claimlr}
\end{claim}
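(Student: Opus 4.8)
The plan is to bracket $s_r$ between matching upper and lower bounds. The upper bound is free: Claim~\ref{lr} already produces a diagram of $L_r$ with exactly $9r^2-27r+21$ crossings, so $s_r \le 9r^2-27r+21$ with nothing further to say.

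The content is therefore the lower bound $s_r \ge 9r^2-27r+21$, and the plan is to invoke the resolution of the Tait crossing-number conjecture: a connected, reduced, alternating diagram of a link realizes that link's crossing number (proved independently by Kauffman, Murasugi, and Thistlethwaite; one could equivalently run the Kauffman-bracket span argument directly). To apply it I first verify that the standard diagram of $L_r$ genuinely satisfies the three hypotheses. It is \emph{alternating} by construction, since every interior tile is a copy of tile~26 oriented consistently and the boundary tiles add no crossings. It is \emph{reduced}, i.e.\ has no nugatory crossings, because $L_r$ was defined in Section~\ref{sec:fam} to be precisely the one of the two boundary-connection choices that introduces no nugatory crossings. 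And its shadow is a \emph{connected} $4$-valent graph: by the ``triangle plus two interior arcs of each slope'' description in the proof of Lemma~\ref{lemma:comp}, every pair of components of $L_r$ crosses, so the underlying diagram is connected (and in fact not a split or connected-sum diagram). Granting these, the alternating-minimality theorem yields $s_r = 9r^2-27r+21$ exactly, as claimed.

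The only real obstacle is making the verification of those hypotheses — especially the absence of nugatory crossings — airtight, but this is built into the very definition of $L_r$, so it is a bookkeeping remark rather than a new argument rather than the crux. As a consistency check one notes that for $r=2$ the formula gives $3$, matching the trefoil $L_2$; the restriction to $r\ge 3$ in the statement is only to match the range used later, and the same proof in fact covers $r=2$.
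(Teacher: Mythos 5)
Your proposal is correct and follows essentially the same route as the paper, which simply observes that the diagram is reduced and alternating and invokes the Tait crossing-number result together with the count from Claim~\ref{lr}. Your version is just a more careful write-up of the same argument — the one small improvement is that you explicitly verify connectedness of the diagram (needed for the link version of the Kauffman--Murasugi--Thistlethwaite theorem), a hypothesis the paper leaves implicit.
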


Claim~\ref{lr} implies that the number of central tiles in the bottom half of the mosaic grows more quickly than $r^2$.  Since every pair of components of of a given $L_r, r \geq 4$ cross each other in this collection of tiles we can use only these tiles to smooth the $r-2$ crossings necessary to get the knot $A_r$ as desired for $r\geq 4$.

\begin{claim} $A_r$ has crossing number $9r^2-28r+23$ and $K_r$ has crossing number $9r^2-30r+25$ for $r \geq 4$.   $A_3$ has crossing number 19 and $K_3$ crossing number 15.
\label{claim:krarcn}
\end{claim}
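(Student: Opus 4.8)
The plan is to exhibit, for each of $A_r$, $K_r$ ($r\ge 4$), and $K_3$, an explicit reduced alternating diagram, and then to invoke the resolution of the Tait conjectures by Kauffman, Murasugi, and Thistlethwaite: a reduced alternating diagram of a link realizes the crossing number, so the number of crossings in any such diagram \emph{is} the crossing number. Everything then reduces to (a) counting crossings in the relevant diagram and (b) checking the diagram is alternating and reduced.

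For $A_r$: by Claim~\ref{claimlr} the alternating mosaic $L_r$ has $9r^2-27r+21$ crossings and no nugatory crossings. By the construction in Section~\ref{sec:fam}, $A_r$ is obtained from $L_r$ by replacing $r-2$ central three-crossing tiles (type $26$) with type-$20$ tiles; each such replacement is a single smoothing, hence removes exactly one crossing, and the type-$20$ tiles are rotated precisely so that the new mosaic diagram is again alternating. So this diagram of $A_r$ has $(9r^2-27r+21)-(r-2)=9r^2-28r+23$ crossings. After verifying it is reduced (see below), the Tait count gives $c(A_r)=9r^2-28r+23$.

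For $K_r$ with $r\ge 4$: starting from the alternating diagram of $A_r$ just described, $K_r$ is produced by the crossing changes of Section~\ref{sec:fam}, after which the second-highest horizontal strand can be isotoped up and over the top of the board; as recorded there, this isotopy drags the strand across exactly $2r-2$ crossings, all of which are eliminated, producing the \emph{standard alternating embedding of $K_r$}. That diagram therefore has $(9r^2-28r+23)-(2r-2)=9r^2-30r+25$ crossings and is alternating by construction, so once it is seen to be reduced, $c(K_r)=9r^2-30r+25$. For $K_3$ the general construction does not apply (a $3$-mosaic has only one central tile, met by a single component of $L_3$), so I argue directly from the alternating projection of $K_3$ in Figure~\ref{fig:k3dual}: it is alternating, reduced, and has $15$ crossings, whence $c(K_3)=15$ — which also shows $K_3$ genuinely departs from the pattern, since $9r^2-30r+25$ equals $16$ at $r=3$.

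The only point beyond bookkeeping is verifying that the three diagrams above are \emph{reduced}, i.e.\ have no nugatory crossings; this matters because an alternating diagram realizes the crossing number only after nugatory crossings are excluded, and both the smoothings forming $A_r$ and the strand slide forming the standard embedding of $K_r$ are exactly the type of local move that can in principle create a nugatory crossing (smoothing one crossing of a Hopf link is the standard cautionary example). I expect to handle this with a short connectivity argument on the underlying $4$-valent shadow graphs: all of these modifications take place strictly among the central tiles in the lower half of the board (for $A_r$) or near the boundary away from the surviving central crossings (for $K_r$), so every remaining crossing stays flanked by four distinct complementary regions, no simple closed curve can meet the diagram in a single crossing, and the diagram is reduced. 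This reducedness check is the main obstacle; the crossing counts themselves are immediate from Claim~\ref{lr}, Claim~\ref{claimlr}, and the construction.
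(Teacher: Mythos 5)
Your proposal is correct and follows essentially the same route as the paper: count crossings in the reduced alternating diagrams of $A_r$, the standard alternating embedding of $K_r$, and the Figure~\ref{fig:k3dual} projection of $K_3$, then invoke the Tait crossing-number theorem, with the arithmetic $9r^2-27r+21-(r-2)=9r^2-28r+23$ and $9r^2-28r+23-(2r-2)=9r^2-30r+25$ matching the paper exactly. The only difference is that you explicitly flag the reducedness verification as a remaining step, whereas the paper simply asserts that $A_r$ and the standard alternating projection of $K_r$ have no nugatory crossings; your proposed connectivity check is a reasonable (and arguably more careful) way to close that point.
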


\begin{proof}
For $r \geq 4$ the proof follows from the constructions in Section~\ref{sec:fam} which gave $A_r$ in a reduced alternating form and from which it was easy to find a reduced alternating diagram of $K_r$.  $K_3$ and $A_3$ can be checked explicitly from the diagrams in Figure~\ref{fig:k3}.
\end{proof}

\begin{claim} 
The dual graph for $L_r,  r \in Z, r \geq 2$ (in the standard diagram) has exterior degree  $9r-15$.
\label{claim:lrextdeg}
\end{claim}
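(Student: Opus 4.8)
The quantity $\Delta(L_r)$ is the degree of the exterior vertex of the dual graph, equivalently the number of edges of the shadow of $L_r$ incident to the unbounded region, equivalently the number of sides of the outer polygon of the projection. My plan is to compute it directly, by tracing the closed walk that bounds the unbounded region and counting its edges; the point is that this walk is confined to the boundary corona and to the outermost crossings of the penultimate corona, so only the outer two coronas of $L_r$ matter.

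I would first dispose of the small cases by inspection. $L_2$ is the standard three-crossing trefoil diagram of Figure~\ref{fig:trefoil}, whose unbounded face is a triangle, so $\Delta(L_2)=3=9\cdot 2-15$; and $\Delta(L_3)=12$ can be read off the explicit diagram of $L_3$ in Figure~\ref{fig:l3}. These two are genuinely special: for $r=2$ the penultimate corona is the single center tile and for $r=3$ it consists entirely of corner tiles, so the type~I/type~II dichotomy used below is vacuous there.

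For $r\ge 4$ I would first record the combinatorial skeleton (using Claim~\ref{lr} and the centered hexagonal count): the boundary corona has $6(r-1)$ tiles, of which $6$ are corner tiles and $6r-12$ side tiles, and the penultimate corona has $6(r-2)$ tiles, of which $6$ are corner tiles and $6r-18$ side tiles, distributed $r-3$ to each of the six board sides. Since every interior tile of $L_r$ is tile~$26$ and hence has three crossings, $L_r$ is saturated, so its boundary tiles are saturated and every connection point on the interface between the two outer coronas is used; parity together with suitable-connectedness then forces each corner boundary tile to carry exactly one arc, and each side boundary tile to pair its four connection points --- two inner, two side --- in one of only a few ways, the actual pairing being pinned down by whether the penultimate tile beneath it is of type~I or type~II (compare tiles $H',G'$ with $C',J',M'$ in Figure~\ref{lp}) together with the hypothesis that $L_r$ has no nugatory crossings, and by the fact (recorded in the definitions) that along the penultimate corona the side tiles are all type~I on three alternate board sides and all type~II on the other three. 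Having fixed these local pictures, I would invoke the threefold rotational symmetry of $L_r$ --- all interior tiles are the alternating three-crossing tile~$26$, which is $120^{\circ}$-rotation invariant, and the no-nugatory boundary completion is unique and hence also $120^{\circ}$-symmetric --- so that it suffices to count the outer-polygon edges lying over a fundamental domain of this symmetry (one meeting two corners and one board side of each type) and multiply by $3$. Tracing the boundary walk through these tiles and the crossings of the penultimate tiles beneath them yields three counts --- a constant contribution near each corner, a contribution linear in $r$ along each type~I side, and one along each type~II side --- whose sum, times $3$, should be $9r-15$, in agreement with the base cases. (Equivalently one can run an induction $\Delta(L_r)=\Delta(L_{r-1})+9$, but establishing the increment needs the same analysis.)

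The main obstacle is pinning down the arc configurations on the side boundary tiles and the way arcs on consecutive boundary tiles chain together: saturation forces the inner connection points to be used but a priori allows more than one pairing of the four connection points of a side boundary tile, and it is precisely the ``no nugatory crossings'' hypothesis, propagated once around the ring through the alternating run of type~I and type~II penultimate tiles, that selects the correct local picture. Carrying out that propagation, and then doing the edge bookkeeping cleanly at the corners and at the type~I/type~II transitions, is where the real work lies; the rest is a finite computation plus the two base cases.
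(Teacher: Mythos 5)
Your proposal is correct and follows essentially the same route as the paper: both count the edges (equivalently vertices) of the outer face side by side around the hexagonal boundary, using the alternating type~I/type~II structure of the penultimate corona together with a correction for the six shared corner vertices, and both come down to the observation that the count increases by $9$ each time $r$ increases by $1$. The paper simply reads the side counts off the figures for small $r$ and asserts the pattern continues, whereas you add scaffolding (saturation forcing the local pictures, the no-nugatory-crossings condition, threefold symmetry) for what is the same count.
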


\begin{proof}

The exterior degree of the dual graph of $L_r$ is, of course, equal to the number of edges in the exterior face of the 4-valent graph projection of $L_r$ from which the dual graph is derived.  Below we will count those edges.

 The tiles containing the outside edges for $L_r$ alternate between 2 types of patterns as seen in $L_3$, $L_4$, and $L_5$ pictured in Figures~\ref{fig:l3}, \ref{fig:l4}, and \ref{fig:l5k5} respectively.  These correspond to type I edges and type II edges.  The pattern continues for any radius and just gets bigger adding one tile on each of the six sides each time $r$ goes up by one.  There is a bijection between vertices and edges on the exterior face of the link projection and so counting the vertices suffices to find the exterior degree.  For $r=3$ the mosaic has three sides containing four vertices each and three sides containing two vertices.   Six vertices  in the corners are double counted yielding a total of 12 distinct vertices (and therefore edges).  For $r=4$ we have three sides with six vertices (the sides containing type II tiles) and three sides with three vertices (the sides containing type I tiles).   Six vertices are double counted yielding a total of 21.  
In general as $r$ increases by one the first set of sides gains one more type I tile yielding two vertices and the second gains one more type II tile increasing the number of vertices on that side by one, and they always share six vertices in common, so the total number of vertices in the n-gon goes up by nine.  See Figure~\ref{fig:l5dual} for the dual graph of $K_5$ with exterior degree 30.

\end{proof}

\begin{claim}
In the standard alternating diagram of $K_r$, the dual graph has exterior degree  $7r-13$ for $r \geq 4$ and 7 for $r=3$.
\end{claim}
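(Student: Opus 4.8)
The plan is to track the exterior degree through the two stages of the construction, $L_r \to A_r \to K_r$. First I would note that the passage from $L_r$ to $A_r$ does not change the exterior degree. Indeed $A_r$ is obtained from $L_r$ by smoothing $r-2$ crossings, each of which lies on a \emph{central} tile, hence deep in the interior, away from the boundary tiles and penultimate tiles that are the only tiles the exterior face of the shadow can meet. Smoothing at such a tile leaves the outer face of the shadow untouched, so Claim~\ref{claim:lrextdeg} gives $\Delta(A_r)=\Delta(L_r)=9r-15$.

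The substance of the proof is the passage from $A_r$ to the standard alternating projection of $K_r$, obtained by lifting the second-highest horizontal strand $h$ up over the whole diagram. After the crossing changes used to form $K_r$, the strand $h$ lies over everything above it, so it can be isotoped in $S^3$ so as to arch over the top of the projection; by the count in the proof of Claim~\ref{claim:krarcn} this removes all but $2$ of the $2r$ crossings that were on $h$. In the new projection the ``cap'' of $A_r$, the part of the diagram lying above $h$ in the top two rows of tiles, is now enclosed beneath the arc of $h$, so the new exterior face is bounded above by $h$ and, along its sides and bottom, by precisely the portion of the old hexagonal exterior cycle that lay at or below the level of $h$. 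Since the standard alternating projection is reduced, its exterior face is a simple cycle, so the exterior degree equals the number of crossings on that face, as observed in the proof of Claim~\ref{claim:lrextdeg}. I would therefore prove the formula by establishing two counts: (i) the arc of $h$ now on the exterior face carries exactly $2$ crossings; and (ii) the stretch of the $A_r$ exterior cycle swallowed by the cap carried exactly $2r$ crossings. Granting (i) and (ii), $\Delta(K_r)=(9r-15)-2r+2=7r-13$ for $r\ge 4$; equivalently, the lift lowers the exterior degree by the same $2r-2$ by which it lowers the crossing number.

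I expect (ii) to be the main obstacle, since it requires pinning down exactly which crossings of the $A_r$ exterior cycle sit in the cap above $h$ — which means checking how the outer boundary arcs interact with the top few rows and, in particular, which of the two hexagon sides meeting the top corner are of type I versus type II. A route that avoids this global bookkeeping, parallel to the proof of Claim~\ref{claim:lrextdeg}, is an induction along the ``one tile per side'' growth of the mosaic: verify the formula directly for $r=4$ and $r=5$ from Figures~\ref{fig:k4} and~\ref{fig:k5}, then check that each increment of $r$ adds exactly $7$ to the exterior degree, since the two extra cap tiles gained per increment contribute two fewer exterior crossings than the nine they would add in $L_r$. Either approach yields $7r-13$.

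The case $r=3$ is handled separately: $K_3$ is defined directly rather than by the general algorithm, and one reads off from its standard alternating embedding in Figure~\ref{fig:k3dual} that the dual graph has exterior degree $7$, which is exactly what the statement asserts for $r=3$.
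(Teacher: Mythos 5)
Your proposal is correct and follows essentially the same route as the paper: both arguments observe that $\Delta(A_r)=\Delta(L_r)=9r-15$ because the smoothings occur on central tiles, then show the lift of the second-highest horizontal strand lowers the exterior degree by $2r-2$, with the $r=3$ case read off from Figure~\ref{fig:k3dual}. Your fallback induction (verify $K_4$ directly, then note each increment of $r$ removes two more exterior edges than the previous case, so the exterior degree grows by $9-2=7$ per step) is precisely the justification the paper gives for the $2r-2$ drop.
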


\begin{proof}

Note that $r=3$ is a special case and it has exterior degree 7 as seen in Figure~\ref{fig:k3dual}.
In general for $r  \geq 4$ the embedding after isotopy has exterior degree $2r-2$ below the exterior degree of $L_r$ so we get degree $9r-15-(2r-2) = 7r-13$.  This is clear by noting that for $r \geq 4$ $A_r$ has the same exterior degree as $L_r$ since all smoothed crossings are on central tiles.  See Figure~\ref{fig:l5dual} for the dual graph of $A_5$.  $K_r$ has the same exterior degree as $A_r$ before the isotopy, but after the isotopy to the standard alternating diagram it drops by $2r-2$.  One can easily see that for $K_4$ the number drops by $6$ and that each time $r$ increases by one then two more edges of the link projection are removed from the exterior (corresponding to the added type II tile on the top side of the mosaic) than were removed in the previous case.

\end{proof}

At times we will want to compare the exterior degree and the number of crossings of two different mosaics, say $L$ and $L'$.  Recall we denoted the exterior degree of $L$ by $\Delta(L)$. Let the number of crossings of $L$ be called $Cr(L)$ (as opposed to $\Delta(L')$ and  $Cr(L')$ for $L'$).  Note that $Cr(L)$ is the number of crossings in the specific diagram $L$, not the crossing number of $L$ considered over all diagrams.  We will also want to examine how individual tiles contribute to $\Delta(L)$ and $Cr(L)$ to see what happens when we swap out tiles to form new mosaics.  Let $T$ be a given tile in $L$ and $T'$ be one in $L'$.  We let $Cr(T)$ be the number of crossings on tile $T$.  
We can also ask how much $T$ contributes to $\Delta(L)$. We know $\Delta(L)$ is equal to the number of edges in the boundary of the exterior face of the mosaic link projection.  
If the boundary of the exterior face of the link projection of $L$ has a vertex which corresponds to a nugatory crossing in $L$, then we say $L$ has a \emph{nugatory crossing on its exterior}. 
If $L$ does not have a nugatory crossing on its exterior then the boundary of the exterior face consists of a set of polygons (and a single polygon if the projection is connected as is the case for knots and non-split links).  There is a bijection between edges and vertices for polygons, so if $L$ has no nugatory crossings on its exterior then the exterior degree will also equal the number of vertices in the boundary of the exterior face of its projection.  
We let $\Delta(T)$ be the number of crossings on tile $T$ which correspond to those vertices and 
thus if $L$ has no nugatory crossings on its exterior 
then  $\Sigma( \Delta(T_i))$ over all tiles $T_i$ in $L$ must equal the exterior degree of $L$.

\begin{theorem}

Let $L$ be a standard hexagonal $r$-mosaic with no nugatory crossings on its exterior.  If  $r \geq 4$ and $Cr(L) \geq 9r^2-30r+25$ then $\Delta(L) > 7r-13$.   For $r=3$ if $Cr(L) \geq 15$ then $\Delta(L) > 7$.
\label{thm:none}
\end{theorem}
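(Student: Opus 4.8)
The strategy is to bound $\Delta(L)$ from below in terms of $Cr(L)$ by a tile-by-tile accounting argument, exploiting the fact that the boundary of the exterior face of the link shadow consists of a polygon (or polygons) whose vertices correspond to crossings on interior tiles, while the total crossing count is constrained by the number of interior tiles available on an $r$-mosaic. Concretely, using the notation established just before the theorem, I would write $Cr(L) = \sum_i Cr(T_i)$ over all interior tiles and $\Delta(L) = \sum_i \Delta(T_i)$ (the latter using the hypothesis that $L$ has no nugatory crossings on its exterior, so the exterior face is a union of polygons with a bijection between edges and vertices). The key is to understand which interior tiles can contribute crossings to $Cr(L)$ without contributing correspondingly many vertices to $\Delta(L)$ — these are the central tiles, which are too deep in the mosaic to touch the exterior face at all. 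So I would split the interior tiles into central tiles, penultimate tiles, and — wait, boundary tiles carry no crossings — so really into central and penultimate. Central tiles contribute to $Cr$ but not to $\Delta$; penultimate tiles can contribute to both.

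First I would record the counts from Claim~\ref{lr}: there are $3r^2-9r+7$ interior tiles in an $r$-mosaic, and I would separately count the number of central tiles and penultimate tiles. The penultimate corona is the $r-2^{nd}$ corona, so it has $6(r-2)$ tiles; hence there are $3r^2-9r+7 - 6(r-2) = 3r^2-15r+19$ central tiles. Each central tile carries at most $3$ crossings, so the central tiles contribute at most $3(3r^2-15r+19) = 9r^2-45r+57$ to $Cr(L)$. Therefore the penultimate tiles must carry at least $Cr(L) - (9r^2-45r+57) \geq (9r^2-30r+25) - (9r^2-45r+57) = 15r-32$ crossings. Now I would argue that a penultimate tile that carries crossings must contribute vertices to the exterior face: any crossing on a penultimate tile, since the boundary tiles above it are crossing-free and $L$ has no nugatory crossing on its exterior, lies on the boundary of the exterior face. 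The cleanest version: each penultimate tile with $k > 0$ crossings forces at least... here I need the precise local geometry — a crossing tile in the penultimate corona with its adjacent boundary tiles saturated or not, how many of its crossings are ``seen'' from outside. I expect that a $3$-crossing penultimate tile contributes at least $2$ (or all $3$) of its crossings as exterior vertices, and in any case $\Delta(T) \geq Cr(T) - c$ for a small constant $c$ per penultimate tile that actually carries crossings, or more likely $\Delta(T_i)$ summed over penultimate tiles is bounded below by a linear function of the penultimate crossing count.

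The main obstacle, and where I would spend the most care, is this last local analysis: translating ``the penultimate tiles carry at least $15r-32$ crossings'' into ``$\Delta(L) > 7r-13$.'' The worry is that crossings on a penultimate tile could be positioned so that the exterior face only touches the tile along one edge, picking up few vertices; one must rule this out using that $L$ has no nugatory crossings on its exterior and that the outer two coronas are fixed by the two-ways-to-connect property. I would handle this by a case analysis over the tile types (using Figure~\ref{fig:tiles}, tiles $23$–$26$ and the lower-crossing tiles $18$–$22$ etc. that can appear penultimately) and the type~I / type~II dichotomy for penultimate tiles introduced in Section~2, showing each crossing-bearing penultimate tile contributes a definite number of exterior vertices. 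A convenient shortcut may be to compare directly against $L_r$: since $L_r$ is the saturated mosaic realizing the maximum, and $L$ has $Cr(L) \geq Cr(K_r) = 9r^2-30r+25$ while $L_r$ has $9r^2-27r+21$ crossings, the ``crossing deficit'' of $L$ relative to saturation is at most $3r-4$; one then argues that removing crossings from a saturated configuration can only decrease $\Delta$ by a controlled amount per removed crossing, and that starting from $\Delta(L_r) = 9r-15$ (Claim~\ref{claim:lrextdeg}) the deficit of $3r-4$ crossings reduces $\Delta$ by strictly less than $(9r-15)-(7r-13) = 2r-2$. I would finish by checking the $r=3$ base case directly against Figure~\ref{fig:k3dual} and the count $15$, confirming $\Delta(L) > 7$ there as well.
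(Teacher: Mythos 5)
Your proposal correctly identifies the endgame of the paper's argument: once the mosaic is compared tile-by-tile against the saturated link $L_r$, the paper proves exactly your ``controlled amount per removed crossing'' claim (its \emph{tile substitution inequality}: lowering $\Delta(T)$ by $t$ on a penultimate tile costs at least $1.5t$ crossings), and then the arithmetic $\Delta(L_r)=9r-15$, crossing deficit $\leq 3r-4 < 3r-3$, hence $\Delta$-deficit $< 2r-2$, closes the proof just as you sketch. The $r=3$ case is also handled by the same counting, not by direct inspection of a figure.

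However, there is a genuine gap, and it sits exactly where you flagged ``the main obstacle'': your entire local analysis --- the type I/type II dichotomy, the claim that a crossing-bearing penultimate tile contributes a definite number of exterior vertices, and especially the shortcut of treating $L$ as ``a saturated configuration with some crossings removed'' --- presupposes that the boundary tiles of $L$ are saturated. A general $L$ satisfying the hypotheses need not be: the link may fail to reach the outer coronas in places, in which case the exterior face dips inward, penultimate tiles need not be type I or type II at all, and the comparison with $L_r$ is not available. (Relatedly, your assertion that central tiles never contribute to $\Delta$ is false in general --- the paper exhibits a mosaic where they do --- though that error is in the harmless direction for a lower bound on $\Delta$.) The paper spends the bulk of its proof (Case 1) on precisely this reduction: it introduces the \emph{complement} of the mosaic, builds $L'$ by absorbing the complement's arcs and loops into the link, and then performs a careful corona-by-corona tile substitution to produce a saturated-boundary mosaic $\widehat{L}$ with $Cr(\widehat{L})\geq Cr(L)$ and $\Delta(\widehat{L})\leq \Delta(L)$; a separate counting lemma (Lemma~\ref{addedcrossings}) is needed to show that crossings lost on certain type II substitutions are recouped on central tiles. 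Without this reduction your argument only proves the theorem for mosaics with saturated boundary, and your fallback ``absolute'' count ($15r-32$ crossings forced onto the penultimate corona) does not obviously recover the bound either: a naive per-crossing ratio of $1/2$ (the ratio realized by $L_r$ itself) gives roughly $7.5r-16$, which exceeds $7r-13$ only for $r>6$, so the small cases would not close without the sharper deficit-against-$L_r$ comparison that requires saturation.
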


\begin{proof}

For $r \geq 4$ we will take a mosaic $L$   with no  nugatory crossings on its exterior such that $\Delta(L) \leq 7r-13$
and show $Cr(L) < 9r^2-30r+25$.  For $r=3$ we will take one with $\Delta(L) \leq 7$ and show $Cr(L)  < 15$.
In particular for $r \geq 4$ we will show that when compared to $L_r$, $L$
may be assumed to be missing at least $3r-3 = \frac{3}{2} *(2r-2)$ crossings on the penultimate tiles, meaning $Cr(L) \leq s_r-(3r-3)
= 9r^2-27r+21- (3r-3)= 9r^2-30r+24$.  The argument for $r=3$ is similar although the count is slightly different.
We will then show in Claim~\ref{claim:highest} that every possible minimal crossing embedding of $K_r$ on an $r$-mosaic would contradict this proving no such embedding exists.

Assume $L$ is a hexagonal $r$-mosaic for $r \geq 4$ with $\Delta(L) \leq 7r-13$ and $Cr(L) \geq 9r^2-30r+25$ or a 3-mosaic with $\Delta(L) \leq 7$ and $Cr(L) \geq 15$, and in either case with no nugatory crossings on its exterior and that $L$ was chosen to have the smallest number of arcs and loops in its complement over all such mosaics.  Case 1 will  show that we can assume there are no arcs or loops in the complement of $L$ and then Case 2 will show that if there are no arcs or loops in the complement we get a contradiction so no such mosaic exists.

\bigskip

{\bf Case 1: There is at least one arc or loop in the complement of $L$. }  

\bigskip

  We start with $L$ as described in the previous paragraph and form $L'$ using the construction in Section~\ref{sec:lp}.
  We know $Cr(L') \geq Cr(L)$, and $L'$ has no arcs or loops in its complement, but it is possible that $\Delta(L') > \Delta(L)$ or that $L'$ has a nugatory crossing on its exterior even though $L$ did not. 
We therefore alter $L'$ to form a mosaic $\widetilde{L}$ with no nugatory crossings on its exterior, $Cr(\widetilde{L}) \geq Cr(L)$
 and $\Delta(\widetilde{L}) \leq \Delta(L)$.  We will then  show that having trivial complement and low exterior degree also guarantees that  $Cr(\widetilde{L}) < Cr(L)$, the final contradiction proving $L$ does not exist.

 We form $\widetilde{L}$ from $L'$ by replacing all the central tiles that don't already have three crossings with tiles that do as well as prescribing a process to swap out the penultimate tiles of $L'$.
An example of forming $L'$ from $L$ and then  $\widetilde{L}$ from $L'$ is shown in Figure~\ref{hat}.  
 We only use three types of tiles for our substitutions  to form $\widetilde{L}$.
 When we refer to a inserting a zero crossing tile we will pick Tile 17 from  Figure~\ref{fig:tiles} (Tile 18 sometimes would be fine, but 17 always works so we stick with it every time).
 When we refer to a one crossing tile we use Tile 19, 
 but since the projections of tiles 19 and 20 are the same, just the crossing is reversed, either one would have been equally good. We will not use any two crossing tiles.
 Finally, when we insert a three crossing tile any of the three crossing tiles work fine, since again, their projections are all the same, but for consistency we use
Tile 27 from Figure~\ref{fig:tiles}.
Each time we insert a tile $\widetilde{T}$ into the penultimate corona, we rotate the tile to make sure no edge of the dual graph
can pass through $\widetilde{T}$ from the exterior vertex to the central tiles  without hitting one of the edges of the link projection in the tile.
 This will ensure that $\Delta(\widetilde{L})$ is exactly the sum of the values from the penultimate tiles.  
We also are careful that the substitution is rotated so that it never introduces nugatory crossings on the exterior.

 We now look at the specific rules for the substitutions.

\bigskip

\emph{Forming $\widetilde{L}$ from $L'$ - substitutions for the central tiles:}
The easiest substitution is on the central tiles (recall the central tiles include 
all the interior tiles except the penultimate tiles).  We replace all the central tiles with 3-crossing tiles. Since $L'$ was suitably connected and had no arcs or loops in its complement, it hit all of the connection points for the interior tiles and thus replacing these tiles of $L'$ with three crossing tiles leaves a link that is still suitably connected.
Although we pick the same tile every time we could equally well choose any 3-crossing tile and rotate it any way we want.
Note that this can never lower the number of crossings on a tile relative to $L$ or $L'$ and if $T$ intersects an arc of the complement of $L$ it has at most one crossing on it, so in that case we are gaining two crossings for each such tile relative to $L$ (see tiles $A$, $A'$, and $\widetilde{A}$ in Figure~\ref{hat}).

\bigskip

\emph{Forming $\widetilde{L}$ - substitutions on tiles that contained the endpoints of the arcs of complement of $L$:}  If there were no arcs in the complement of $L$ and only loops then we skip this step.  Otherwise take an arc of the complement $a_i$ and
let $T$ be one of the tiles in the penultimate corona of $L$ containing one of the endpoints of $a_i$.  Let $T'$ be the corresponding tile in $L'$.  
$Cr(T) \leq 1$ since the presence of $a_i$ ensures $T$ contains at most 2 arcs of $L$. If 
$Cr(T)=0$ then it trivially follows that $\Delta(T)=0$ as well.
In that case we replace $T' \subset L'$ with a 0 crossing tile $\widetilde{T}$ 
which then dictates that $Cr(\widetilde{T})= \Delta(\widetilde{T}) = 0$ matching the original contributions of $T$ (see tiles $B$, $B'$, and $\widetilde{B}$ in Figure~\ref{hat}).   Similarly if 
$Cr(T)=1$ then since $T$ only contains two arcs of $L$ and $L$ has no nugatory crossings on its exterior,  it follows that $\Delta(T)=1$.  In that case 
  we  can replace the corresponding tile $T'$ from $L'$  with a one crossing tile $\widetilde{T}$
  that    like $T$ has the property that  $Cr(\widetilde{T}) = \Delta(\widetilde{T}) = 1$ (see tiles $C$, $C'$, and $\widetilde{C}$ in Figure~\ref{hat}).

We now know that the substitutions for tiles containing the endpoints of the arcs of the complement change neither the number of crossings of the mosaic nor the exterior degree relative to the original mosaic $L$ so we now turn our focus to the substitutions for the penultimate tiles which did not contain endpoints of the complement of $L$ to make sure 
$Cr(\widetilde{L}) \geq Cr(L)$  and $\Delta(\widetilde{L}) \leq \Delta(L)$.
  
  \bigskip

\emph{Forming $\widetilde{L}$ - substitutions for corner tiles in the penultimate corona which did not contain any endpoints of the arcs of the complement of $L$:}
Our next group of tiles to fix are the corner tiles with the exception of those corner tiles which were already replaced in the previous step.
Let $T$ be a corner tile of the penultimate corona of $L$ and let $T'$ be the corresponding corner tile for $L'$.
We want to find a tile $\widetilde{T}$ to replace $T'$ with such that $ Cr(\widetilde{T}) \geq Cr(T)$ and  $ \Delta(\widetilde{T}) \leq \Delta(T)$.
 Each corner tile
 intersects three boundary tiles. If $\Delta(T) = 0$ then $Cr(T)=0$ and we replace $T'$ with a zero crossing tile
  so that $Cr(\widetilde{T}) = \Delta(\widetilde{T}) = 0$.  We are careful to rotate $\widetilde{T}$ so that it does not form a split link component with the boundary tiles (see tiles $D$, $D'$, and $\widetilde{D}$ in Figure~\ref{hat} for an example of this substitution).
Because $T$ is a corner tile and $L$  has no nugatory crossings on its exterior, if $\Delta(T) = 1$ then $Cr(T)=1$ in which case we replace $T'$ with a one crossing tile
positioned so that $Cr(\widetilde{T} ) = \Delta(\widetilde{T}) = 1$ and so it does not introduce a nugatory crossing in the exterior (see tiles $E$, $E'$, and $\widetilde{E}$ in Figure~\ref{hat}).  Finally, if $\Delta(T) = 2$ then we can choose $\widetilde{T}$ to be a three crossing tile with the property that $\Delta(\widetilde{T}) = \Delta(T) = 2$,
and since $Cr(\widetilde{T}) = 3$ we know $Cr(\widetilde{T}) \geq Cr(T)$
(see tiles $F$, $F'$, and $\widetilde{F}$ in Figure~\ref{hat}).

 Now we know that to make sure
$Cr(\widetilde{L}) \geq Cr(L)$  and $\Delta(\widetilde{L}) \leq \Delta(L)$
   we only need a substitution for the penultimate tiles that neither contained one of the endpoints of an arc of the complement of $L$ nor are corner tiles.  Since any remaining tile $T$ is disjoint from the endpoints of $a_i$, we know that $T \subset L$ and the corresponding $T' \subset L'$ are either type I or II (although they may or may not be the same type as each other).  Note that for 3-mosaics all the substitutions are now complete and one can skip directly to the conclusion of Case 1  because for $r=3$ every tile in the penultimate corona is a corner tile and the following situations cannot occur.

\bigskip 
 
\emph{Forming $\widetilde{L}$ - substitutions for type I tiles in the penultimate corona:}  Given corresponding tiles $T$ and $T'$,
 If $T' \subset L'$ is type I and $T$ does not contain an endpoint of one of the arcs of the complement and for   $T \subset L$, $\Delta(T) = 0$ then $Cr(T) \leq 1$.  In this case we replace $T'$ with a one crossing tile $\widetilde{T}$
rotated so that  $\Delta(\widetilde{T}) = 0$ and $Cr(\widetilde{T}) = 1$ (see tiles $G$, $G'$, and $\widetilde{G}$ in Figure~\ref{hat}).  Then we have $\Delta(\widetilde{T})\leq \Delta(T)$ and $Cr(\widetilde{T}) \geq Cr(T)$.  If, on the other hand, 
  $T' \subset L'$ is type I and for  $T \subset L$, $\Delta(T) \geq 1$ then we replace $T'$ with $\widetilde{T}$ a three crossing tile 
  rotated so that 
 $\Delta(\widetilde{T}) = 1$ (see tiles $H$, $H'$, and $\widetilde{H}$ in Figure~\ref{hat}).  Then since $Cr(\widetilde{T})=3$
 we know $\Delta(\widetilde{T})\leq \Delta(T)$ and $Cr(\widetilde{T}) \geq Cr(T)$.  Now we are left to worry about the case where $T'$ is type II.   
 
\bigskip

\emph{Forming $\widetilde{L}$ - substitutions for type II tiles in the penultimate corona:}  
 We are left with the case of corresponding tiles $T$ and $T'$  where $T'$ is type II and $T$ does not contain an endpoint of an $a_i$.  If $\Delta(T)  =  2$ then we replace $T'$ with a three crossing tile $\widetilde{T}$ 
 rotated so that 
 $\Delta(\widetilde{T}) = 2$ (see tiles $I$, $I'$, and $\widetilde{I}$ in Figure~\ref{hat}).  Then since $Cr(\widetilde{T})=3$
 we know $\Delta(\widetilde{T})\leq \Delta(T)$ and $Cr(\widetilde{T}) \geq Cr(T)$. 
 
  If $\Delta(T)  =1$ then we replace $T'$ with a one crossing tile $\widetilde{T}$ 
  rotated so that 
 $\Delta(\widetilde{T}) = 1$ (see tiles $J$ and $K$, $J'$ and $K'$, and $\widetilde{J}$ and $\widetilde{K}$ in Figure~\ref{hat}).  Then 
 we know $\Delta(\widetilde{T})\leq \Delta(T)$.  Since $T$ did not contain an endpoint of an $a_i$, it was either type I or type II.  If it was also type II, like $T'$,
 then $Cr(\widetilde{T}) \geq Cr(T)$.  If it was type I and an arc of some $a_i$ or $c_j$ in the complement passed through it, then it contained at most one crossing so again 
 $Cr(\widetilde{T}) \geq Cr(T)$.   A problematic situation that   is if 
 like tile $K$ in Figure~\ref{hat}, $T$ is disjoint from all the arcs and loops of the complement of $L$ in which case we may have lost up to 2 crossings with the substitution, for example, we see   $Cr(\widetilde{K})=Cr(K)-2$.   Lemma~\ref{addedcrossings} will say that earlier substitutions on the central tiles increased the number of crossings by at least enough to compensate for any lost crossings here.

The final possibility when $T'$ is type II  is that $\Delta(T)=0$.  In this case we substitute $\widetilde{T}$ a 0 crossing tile 
 (see tiles $M$, $M'$, and $\widetilde{M}$ in Figure~\ref{hat}).  Then 
 clearly  since  $\Delta(\widetilde{T})=0$,  $\Delta(\widetilde{T})\leq \Delta(T)$.  Since $T$ did not contain an endpoint of an arc of the complement, it was either type I or type II.  It cannot also be type II and contain a crossing or $\Delta(T)>0$, so we see that if $T$ was type II then $Cr(T)=0$ and $Cr(\widetilde{T}) = Cr(T)$.  If instead $T$ was type I and an arc of some $a_i$ passed through it, then again 
we must have $Cr(T)=0$ giving
 $Cr(\widetilde{T}) = Cr(T)$.   The 
 same problem as before, however, does need to be addressed. If $T$ was type I and was disjoint from the complement
then it might have had $Cr(T)=1$ and we have lost one crossing with the substitution, for example $Cr(\widetilde{M})=Cr(M)-1$.  Again, though,
Lemma~\ref{addedcrossings} together with the step we already took of replacing all the central tiles with 3-crossing tiles ensures that  these possible deficits are not a problem.

We are now done with all of our substitutions and have completed the definition of $\widetilde{L}$. Lemma~\ref{addedcrossings} states that the substitution we made on the central tiles adds at least as many crossings in as were lost in the two problem cases above ensuring $Cr(\widetilde{L})  \geq Cr(L)$.
 
 \bigskip

 By assumption for $r \geq 4$ $Cr(L)\geq 9r^2-30r+25 = s_r -(3r-3)$ where $s_r$ is the number of crossings in a saturated mosaic.  If $T$ is a tile of $L$ containing an arc of the complement $a_i$
then we know $Cr(T) \leq 1$ since $T$ contains at most two arcs of $L$.  Thus in this case $T$ has two fewer crossings than a saturated interior tile contains. Given the lower bound on $Cr(L)$, this implies that  the arcs of the complement can intersect at most $\frac{3r-3}{2}$ tiles.  When $r \geq 4$ this means that no $a_i$ is long enough to connect opposite sides of the penultimate corona since any such arc is of length at least $2r-3$ (which is greater than $\frac{3r-3}{2}$).
Thus any $a_i$ can span portions of one, two, or three sides of the penultimate corona, but not more.

We were concerned above with losing crossings on tiles that are type I in $L$, type II in $L'$, and are disjoint from the complement.  Any tile of this type must be outside of one of the arcs of the complement or it would be the same type in $L$ as it was in $L'$.   
 Let $B$ be the set of type II tiles of $L'$ for which the corresponding tile of $L$  was both outside of some arc of the complement of $L$ and disjoint from the complement.  Clearly $B$ includes all of the tiles on which we may have lost crossings during our substitutions and possibly some on which we did not lose crossings.
  
\begin{lemma}
If $B$ contains $t$  tiles  then the arcs of the complement of $L$, $\{ a_1,  a_2, \dots, a_n\}$, intersect at least $t$ central tiles.
\label{addedcrossings}
\end{lemma}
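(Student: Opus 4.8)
The plan is to exhibit an injection $\phi \colon B \hookrightarrow \{\text{central tiles of }L'\text{ that are met by some }a_i\}$; the inequality then follows at once, since distinct tiles of $B$ are sent to distinct central tiles, each of which is met by an arc of the complement.

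First I would fix the ambient picture. Each $a_i$ is a properly embedded arc in the open interior region of the mosaic whose two endpoints are connection points on the interface between the penultimate corona and the boundary tiles; the $a_i$ are pairwise disjoint, and no two of them share an endpoint (the link and the complement together meet each connection point exactly once). Combined with the length estimate obtained just above the statement — since $Cr(L)\ge s_r-(3r-3)$ the arcs of the complement touch at most $\tfrac{3r-3}{2}$ tiles, so no $a_i$ reaches from one side of the penultimate corona to the opposite side — each $a_i$ has a genuine \emph{outside}, a disk $\mathrm{out}(a_i)$ that abuts a proper sub-arc of the penultimate corona and does not contain the centre tile. Because the $a_i$ are disjoint with distinct endpoints, two such outside disks are either nested or disjoint (two overlapping, non-nested caps would force their boundary arcs to cross), so $\{\mathrm{out}(a_i)\}$ is a laminar family; hence for every tile $T'$ the arcs $a_i$ with $T'\subset\mathrm{out}(a_i)$ are linearly ordered by inclusion of caps. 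For $T'\in B$ (which by hypothesis lies in at least one such cap) let $a(T')$ be the one with the \emph{smallest} cap.

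The geometric core is the claim that $a(T')$ must thread through a central tile just inward of $T'$. A tile of $B$ is a non-corner penultimate tile (the labels type I / type II apply only to non-corner penultimate tiles), it is disjoint from every $a_i$ and from the loops $c_j$, and it lies in $\mathrm{out}(a(T'))$, whereas the centre tile lies in the complementary region. Walk inward from $T'$ along the radial chain of tiles $T'=t_0,t_1,t_2,\dots$ running toward the centre, where $t_1,t_2,\dots$ are central tiles. This chain begins in $\mathrm{out}(a(T'))$ and ends in the inside of $a(T')$, so it must cross the arc; at the first crossing, $a(T')$ separates two consecutive tiles $t_{j^\ast-1},t_{j^\ast}$, and being a single arc it therefore passes through one of the two. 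Since $T'=t_0$ is disjoint from the complement, that tile is some $t_j$ with $j\ge 1$ — a central tile, and it is met by $a(T')$. Set $\phi(T')$ equal to this tile; note that it sits inside the cap $\mathrm{out}(a(T'))$ (at depth $j^\ast-1$ or $j^\ast$ along the chain), hence close to the penultimate corona rather than near the centre.

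It remains to see that $\phi$ is injective, and this is the step I expect to be the real work. Suppose $T'\ne T''$ in $B$ have $\phi(T')=\phi(T'')=c$. Then $c$ lies on the inward radial chain of both tiles and is close to the penultimate corona (it lies in both caps). In the hexagonal lattice two inward radial chains can meet close to the corona only if they emanate from penultimate tiles that are close to one another around that corona; consecutive non-corner penultimate tiles on a common side have disjoint radial chains near the corona, so after ruling out the same-side case $T'$ and $T''$ must lie on \emph{different} sides, flanking a common corner. But the penultimate corona of $L'$ has saturated boundary (by the construction of $L'$ in Section~\ref{sec:lp}), so its sides alternate type as one passes each corner: every non-corner tile of a given side shares its type, and the type switches at each corner. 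Hence $T'$ and $T''$ would be of opposite types, contradicting $B\subseteq\{\text{type II tiles}\}$. So no collision occurs, $\phi$ is injective, and the lemma follows.

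The main obstacle is making the injectivity argument fully rigorous inside the hexagonal grid: precisely defining the inward radial chain, pinning down the assertion that two such chains can meet near the penultimate corona only at a shared corner (including the small cases $r=4,5$, where the central region is tiny; for $r=3$ there are no non-corner penultimate tiles, so $B=\varnothing$ and nothing is needed), and checking that the length bound on the $a_i$ really does keep $\phi(T')$ inside the cap $\mathrm{out}(a(T'))$ so that the ``close to the corona'' reasoning is legitimate. The type-parity structure of the saturated penultimate corona is the key that converts ``potential radial collision'' into ``contradiction,'' and I would use Figure~\ref{lp} to keep the three tile types straight throughout.
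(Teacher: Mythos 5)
Your strategy is genuinely different from the paper's: you try to build an injection $\phi$ from $B$ into the set of central tiles met by the complement, one tile of $B$ at a time, whereas the paper first normalizes the family $\{a_1,\dots,a_n\}$ (discarding nested arcs and band-summing any two arcs that share a central tile) into a family $D$ of non-nested arcs meeting pairwise disjoint sets of central tiles, and then proves a per-arc counting inequality: for each sub-arc $d'$ meeting the penultimate corona only in its endpoints, the number of type II tiles outside $d'$ is at most the number of central tiles $d'$ traverses, with cases according to whether the endpoints of $d'$ lie on one side, on two adjacent sides, or on two sides separated by a third (only the last case needs the length bound $\tfrac{3r-3}{2}$, which you also invoke). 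Your setup is fine as far as it goes: the laminar structure of the caps, the choice of the innermost cap $a(T')$, and the existence of a first tile of an inward chain that $a(T')$ meets are all sound.

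The gap is the injectivity of $\phi$, which you yourself flag as ``the real work,'' and the one concrete assertion you offer in its support is false: consecutive non-corner penultimate tiles on a common side do \emph{not} have disjoint inward chains near the corona, because in the hexagonal grid two adjacent tiles of the $(r-2)$-nd corona share a common neighbour in the $(r-3)$-rd corona. With the natural choice of inward neighbour both chains can begin at that shared central tile, and an arc that hugs the corona and passes through it would receive two distinct tiles of $B$ under $\phi$. One can try to repair this by fixing a chirality (always take, say, the left inward neighbour), which keeps same-side chains parallel and disjoint while they stay in that side's wedge, but then you must still (i) show the chains do not leave the wedge before meeting the arc, which is problematic for tiles near the ends of a side, and (ii) rule out collisions between chains emanating from two same-type sides $h_1$ and $h_3$ separated by an intervening side $h_2$ --- your corner-parity argument only excludes tiles on sides flanking a common corner, not this configuration. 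Neither point is addressed, so as written $\phi$ has not been shown to be injective and the lemma does not follow. The paper's normalization step is exactly what sidesteps this bookkeeping: after band-summing, no two arcs share a central tile and none is nested in another, so the count can be done arc by arc rather than tile by tile.
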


 \begin{proof}
 Note that we do not need the lemma when $r=3$ since we are able to skip this step, but it is trivially true in that case anyway because the penultimate corona of a 3-mosaic consists of exactly six corner tiles and so there are no type I or type II tiles which implies that in that case $B = \emptyset$ and so $t=0$.  We now proceed to the more interesting case of $r \geq 4$.

We start with the set of arcs of the complement of $L, A=\{ a_1,  a_2, \dots, a_n\}$ and build a new, related set of arcs $D= \{ d_1,  d_2, \dots, d_m\}, m \leq n$ which are similar, but have some nice properties explained below.  The proof works independent of any loops of the complement so we ignore them.  We describe the process here and show an example of turning $A$ into $D$ in Figure~\ref{fig:arcs}.  The first step in forming $D$ from $A$ is to take a subset of $A$ which we call $E$ which is the smallest subset such that each tile that was outside an arc of $A$ will also be outside an arc of $E$.  This gets rid of any nested arcs. See the left two pictures in Figure~\ref{fig:arcs} for an example of forming $E$ from $A$.

 Now we still might have two arcs say $a_i$ and $a_j$ which intersect the same central tile
as modeled by $a_2$ and $a_3$ in Figure~\ref{fig:arcs}.   If so delete the portion of the $a_i$ and $a_j$ in the shared tile and reconnect within the tile taking the connect sum of the two arcs so that now we have two new arcs $a_i'$ and $a_j'$ which intersect the same central tiles as $a_i$ and $a_j$ did, but with one of the new arcs outside of the other (as an example of this formation, see arcs $a_2'$ and $a_3'$  in Figure~\ref{fig:arcs}).  Note forming $a_i' \cup a_j'$ from $a_i \cup a_j$ is equivalent to simply swapping out one of the non-uniquely defined tiles of the complement from the top row of Figure~\ref{fig:comp} for the other choice. Without loss of generality say $a_j'$ is outside of $a_i'$.  Now every tile that is outside of $a_i$ or $a_j$ is also outside of $a_i'$.
Let the set $F$ be formed from $E$ by deleting $a_i$ and $a_j$ and adding in $a_i'$.  
Take subset $G$ of $F$ that is the smallest subset of $F$ such that each tile that was outside an arc of $F$ will also be outside an arc of $G$ again getting rid of any nested arcs.
Note that the set of central tiles that $G$ intersects is a subset of the central tiles that $A$ intersected.  Repeat this process until we have a set of arcs which intersect no common central tiles and which are not nested.  Call this new set of arcs  $D$ and for convenience rename the arcs $\{ d_1,  d_2, \dots, d_m\}, m \leq n$.

 Now if we can prove the lemma for the arcs of $D$ then clearly it is true for $A$ since
every tile that is outside an arc of $A$ is also outside an arc of $D$ and  $A$ intersects at least as many central tiles as $D$ does.

\begin{figure}[!ht]
   \centering
   \setlength{\unitlength}{0.1\textwidth}
   \begin{picture}(10,2.6)
     \put(0,0){\includegraphics[width=.25\textwidth]{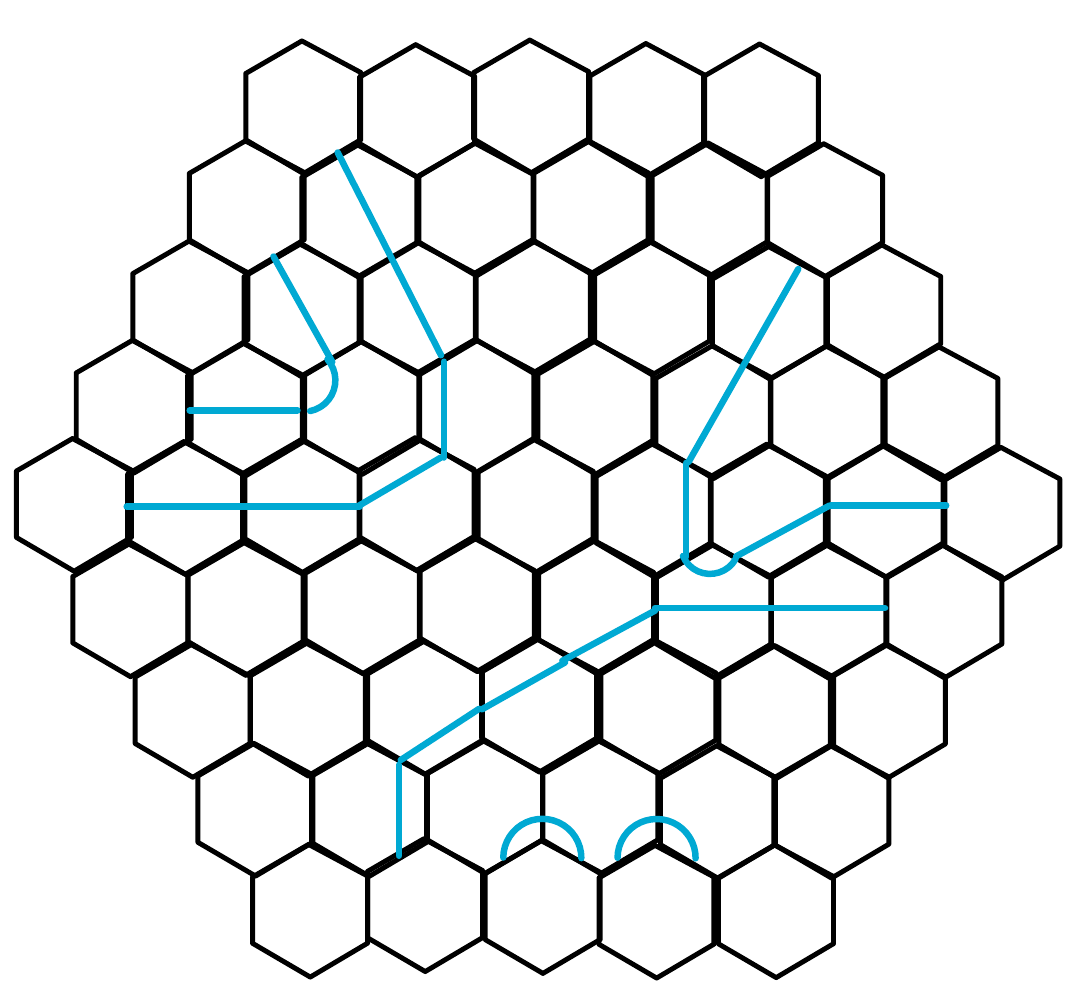}} 
     \put(2.5,0.01){\includegraphics[width=.25\textwidth]{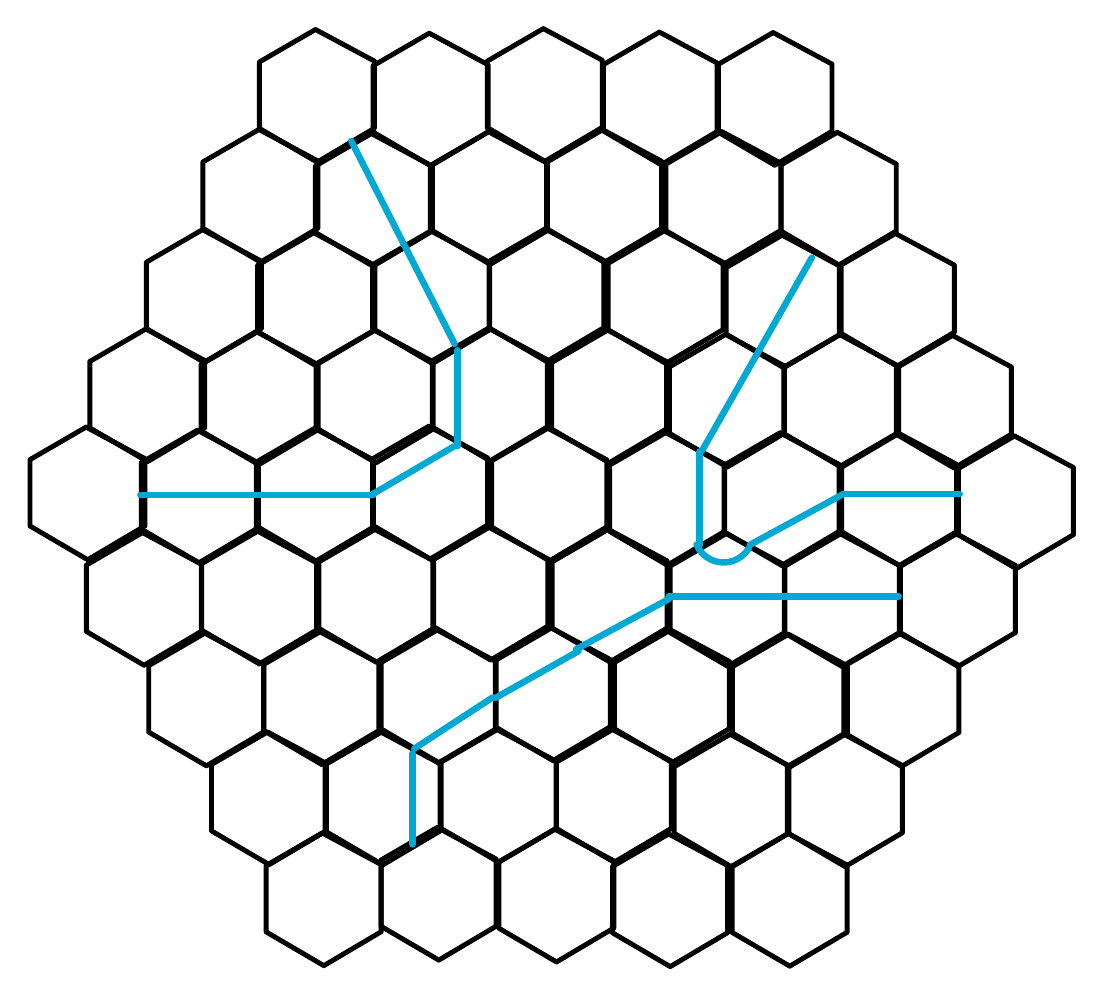}} 
     \put(5,-.03){\includegraphics[width=.25\textwidth]{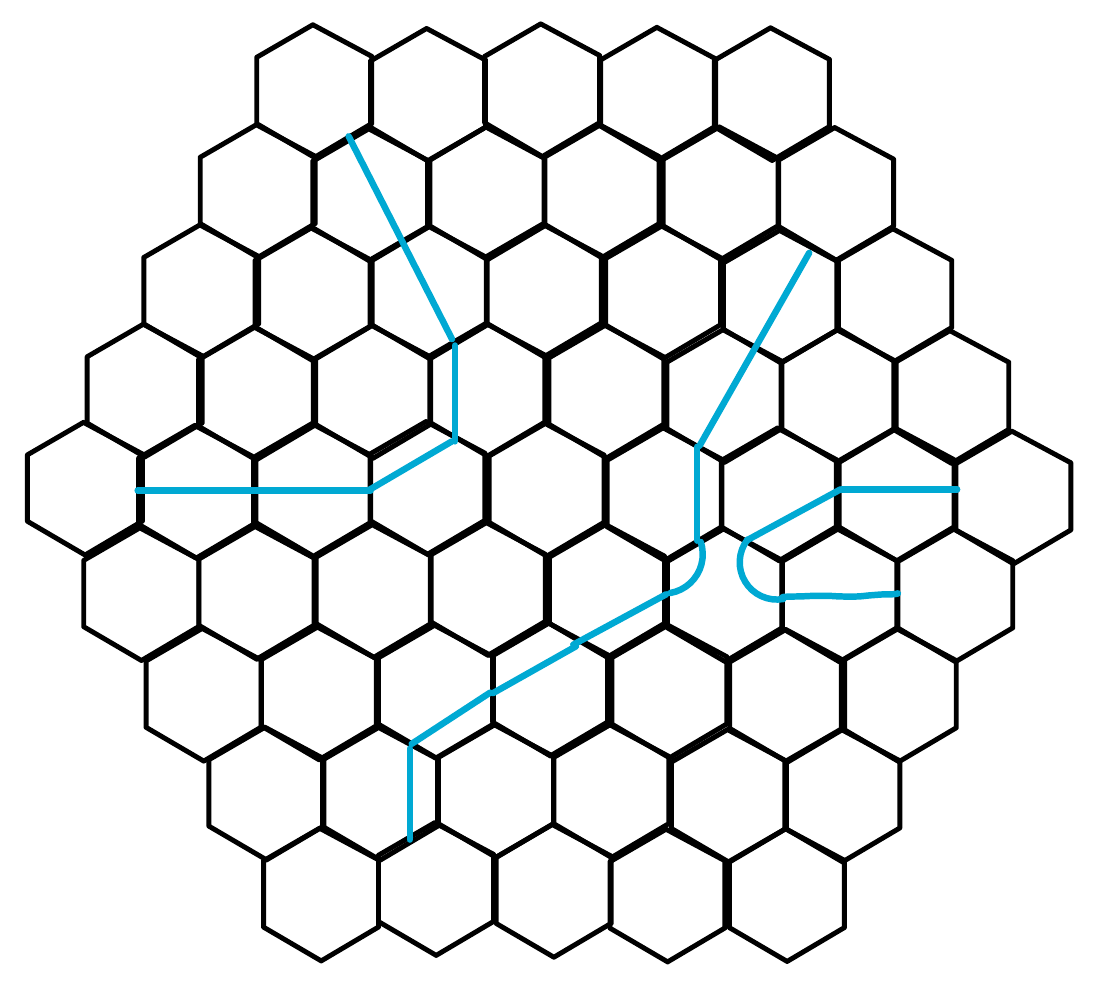}} 
     \put(7.5,-0.07){\includegraphics[width=.25\textwidth]{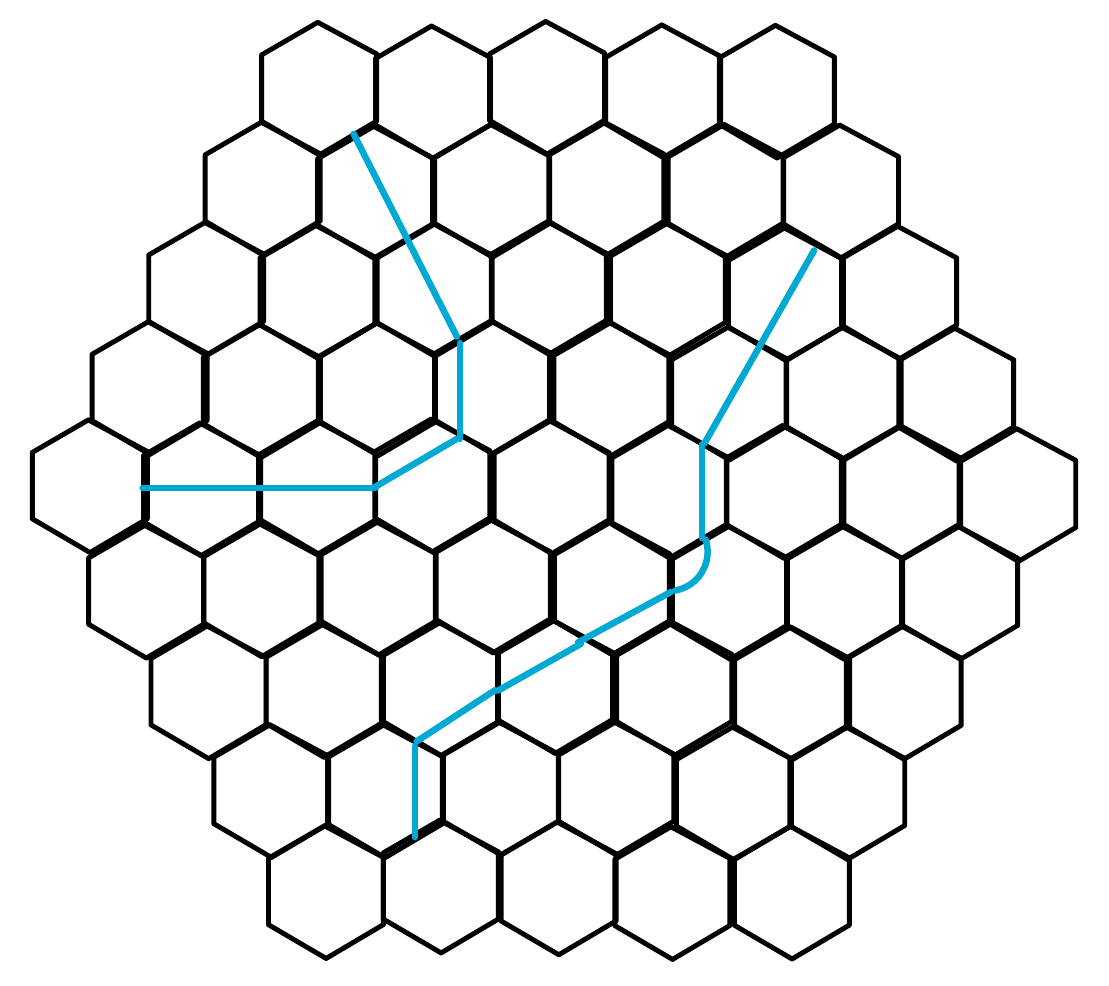}}
     \put(0.07,1.1){$a_1$}
     \put(0.85,0.2){$a_2$}
     \put(2.18,1.1){$a_3$}
     \put(0.19,1.33){$a_4$}
     \put(1.125,0.2){$a_5$}
     \put(1.4,0.2){$a_6$}
     \put(2.6,1.1){$a_1$}
     \put(3.41,0.2){$a_2$}
     \put(4.7,1.05){$a_3$}
     \put(5.08,1.02){$a_1'$}
     \put(5.88,0.12){$a_2'$}
     \put(7.17,1.01){$a_3'$}
     \put(7.6,1.00){$d_1$}
     \put(8.38,0.09){$d_2$}
   \end{picture}
\caption{In the first mosaic, on the left, we have the set of arcs $A = \{a_1, a_2, \dots a_6 \}$, In the second we take the subset $E = \{a_1, a_2, a_3 \}$ to get rid of the nested arcs in $A$. In the third we change $a_2$ and $a_3$ which meet in a central tile into $a_2'$ and $a_3'$.  Finally on the left we restrict to our final set of arcs $D$ and rename them $d_1$ and $d_2$. $D$ has the desirable properties that it has no nested arcs, every tile that was outside of an arc of $A$ is outside of an arc of $D$, and no two arcs of $D$ intersect the same central tile.}
\label{fig:arcs}
\end{figure}

 While a given $d_j$ may enter some of the penultimate tiles besides the ones containing
its endpoints, like $a_2$ does for $L$ in Figure~\ref{hat}, it suffices to prove the lemma for each of the sub-arcs of  $\{ d_1,  d_2, \dots, d_m\}$ that meet the penultimate corona only on the sub-arc's endpoints.   Call such a sub-arc $d'$.  If both endpoints of $d'$ are on the same side of the penultimate corona then the lemma is clearly satisfied as it must hit at least as many central tiles as the number of tiles it skips in the corona.  
The same is true  if the endpoints of $d'$ are on adjacent sides of the penultimate corona.  Since $d_i$ is not long enough to connect opposite sides of the corona, we are only left with the case where $d'$ has endpoints on two sides, say $h_1$ and $h_3$, but misses side $h_2$ lying between them completely.  

Note each side of the penultimate corona of a mosaic contains $r-3$ tiles which are not corner tiles.  Recall that the set $B$ is a subset of the type II tiles in $L'$.   We claim there are  no more than $r-3$ type II tiles of $L'$ outside $d_i$ (and thus $d'$ must obey the bound, too).  
To connect non-adjacent sides without having an endpoint on a corner tile, we know the length of $d'$ must be at least $r$ in which case it hits at least $r-2$ central tiles.  Either $h_1$ and $h_3$ contain only type II tiles and $h_2$ only contains type I or vice versa.  If $h_2$ contains the type II tiles then since it has $r-3$ non-corner tiles there are exactly $r-3$ type II tiles outside  of $d'$.  
If on the other hand $h_2$ contains type I tiles and the non-corner tiles of $h_1$ and $h_3$ are the type II tiles then observe that if $d'$ is length $r+k$  then it can include at most $k$ non-corner tiles of $h_1$ on its outside and the same for $h_3$.  We know the length of $d_i$ (and thus $d'$) can exceed $r$ by at most $\frac{3r-3}{2} -r = \frac{r-3}{2}$.  
Such an arc will therefore have at most $\frac{r-3}{2}$ non-corner tiles outside it from $h_1$ and the same for $h_3$  for a maximum total of $r-3$  type II tiles.  
Since in this case $d'$ is disjoint from an entire side of the penultimate corona (on its outside) it  must pass through at least $r-2$ central tiles which is, of course, greater than $r-3$ proving the final case of the lemma.
 \end{proof}
 
\bigskip

\emph{Conclusion of Case 1:} 
 We now know that $Cr(\widetilde{L}) \geq Cr(L)$, we also know that 
 for each penultimate tile $\widetilde{T} \subset \widetilde{L}$ and corresponding tile $T \subset L$ that $\Delta(\widetilde{T}) \leq \Delta(T)$.
  It is theoretically possible for central tiles to contribute to $\Delta(\widetilde{L})$ if we make a foolish substitution.  Central tiles, for example, 
contribute to $\Delta(L')$ in Figure~\ref{hat}.  
We, however, carefully prevented this in our construction of $\widetilde{L}$
because each time we substituted we rotated the replacement tile to ensure no edge of the dual graph
could pass through it from the exterior vertex to the central tiles without hitting one of the edges of $\widetilde{L}$ and our substitutions in the penultimate corona were also carefully chosen so that they never created any any nugatory crossings on the exterior of the mosaic.
This implies that $\Delta(\widetilde{L})$ is exactly the sum of the values from the penultimate tiles.   Thus we now have shown that  $\Delta(\widetilde{L}) \leq \Delta(L)$ and $Cr(\widetilde{L}) \geq Cr(L)$, but $\widetilde{L}$ has no arcs or loops in its complement.  
So now we know that for $r \geq 4$ if $L$ exists then $\widetilde{L}$ exists with $\Delta(\widetilde{L}) \leq 7r-13$ and $Cr(\widetilde{L}) \geq 9r^2-30r+25$ ($\Delta(\widetilde{L}) \leq 7$ and $Cr(\widetilde{L}) \geq 15$ for $r=3$),  and $\widetilde{L}$ has no nugatory crossings on its exterior and no arcs or loops in its complement so we proceed to case 2.

 \bigskip

{\bf Case 2: $\widetilde{L}$ has no arcs or loops in its complement. }

\bigskip

Since there are no arcs in the complement, the boundary tiles of the mosaic must use all possible connection points.   There are, of course, two ways to connect up the boundary tiles in a standard mosaic, and in this situation the two ways are the same up to rotation of the entire mosaic by $\pi/3$ so we may assume $\widetilde{L}$ and $L_r$ have identical boundary tiles and that $\widetilde{L}$ can be formed from $L_r$ by replacing some portion of the interior tiles. We used $T$ to refer to tiles of $L$ earlier and will use subscripts to refer to multiple tiles from the same mosaic later, so we will refer to an individual tile belonging to $L_r$ as $T^r$ and the corresponding tile in $\widetilde{L}$ as $\widetilde{T}$.
In particular, we will focus on the substitution when $T^r$ is one of the penultimate tiles and how $\Delta(\widetilde{T})$ compares to $\Delta(T^r)$. 
We argue that for each tile in this corona if  $\Delta(T^r)- \Delta(\widetilde{T})= t$ then $Cr(T^r)-Cr(\widetilde{T}) \geq 1.5t$.  Call this property \emph{the tile substitution inequality}.  While as before there theoretically could be some central tiles that contribute to $\Delta(\widetilde{L})$
the inequality above will ensure that for $r\geq 4$ if the penultimate tiles have not already driven $\Delta(\widetilde{L})$ above $7r-13$ then they must be missing so many crossings that $Cr(\widetilde{L})$ must be below $9r^2-30r+25$ (and for $r=3$ that if $\Delta(\widetilde{L})$ is pushed above $7$ then $Cr(\widetilde{L})$ falls below $15$),  a contradiction.

As in any saturated mosaic ($r \geq 3$), the penultimate corona
of $\widetilde{L}$ (and of $L_r$) has 6 corner tiles, $3(r-3)=3r-9$  type I tiles and $3r-9$  type II tiles.  If $T^r$ in the penultimate corona is a corner tile for $L_r$ 
then $\Delta(T^r)=2$, if $T^r$ is a type I tile then $\Delta(T^r)=1$, and  if $T^r$ is a type II tile then $\Delta(T^r)=2$.  Claim~\ref{claim:lrextdeg} shows that these contributions add to $9r-15$.  We have assumed  $\Delta(\widetilde{L}) \leq 7r-13$.     Thus the replacements in the penultimate corona that form $\widetilde{L}$ must  lower $\Delta$ by at least $2r-2$ for $r \geq 4$ (or by 5 for $r=3$)
without lowering the total number of crossings too much. We now inspect the three possible types of tiles.

 \begin{itemize}
 
\item \emph{A) $T^r$ is a corner tile:} For corner tiles $\Delta(T^r)=2$ so if $\Delta(\widetilde{T}) < \Delta(T^r)$ then $\Delta(\widetilde{T}) = 1$ or 0.
If $\Delta(\widetilde{T})=1$ then $Cr(\widetilde{T})=1$ so in this case
$\Delta(T^r)- \Delta(\widetilde{T})= 1$ and $Cr(T^r)-Cr(\widetilde{T}) =2$. On the other hand, if $\Delta(\widetilde{T})=0$ then $Cr(\widetilde{T})=0$ so in this case $\Delta(T^r)- \Delta(\widetilde{T})= 2$ and  $Cr(T^r)-Cr(\widetilde{T}) =3$ so in either case the tile substitution inequality is satisfied.

\item \emph{B) $T^r$ is a type I tile:}
In this case if $\Delta(\widetilde{T}) < \Delta(T^r)$ then $\Delta(\widetilde{T})=0$ since $\Delta(T^r)=1$, but  if $\Delta(\widetilde{T})=0$ then $Cr(\widetilde{T}) \leq 1$ since every type I  two or three crossing tile contributes at least one to $\Delta$.  Thus in this case in order to lower $\Delta(T^r)$ by one we must lower $Cr(T^r)$ by two and as before the tile substitution inequality is satisfied.

\item \emph{C) $T$ is a type II tile:}
For these tiles if $\Delta(\widetilde{T})=1$ then $Cr(\widetilde{T})=1$ so again we must lose two crossings to lower $\Delta(T^r)$ by one, and finally
for type II tiles if $\Delta(\widetilde{T})=0$ then $Cr(\widetilde{T})=0$ so we must lose three crossings to lower  $\Delta(T^r)$ by two so in this case, too,  the tile substitution inequality is satisfied.

 \end{itemize}

Thus in every case lowering $\Delta(T^r)$ by  $t$   lowered the total number of crossings by at least $1.5t$.  We constructed $\widetilde{L}$ so that  for $r\geq 4$, $\Delta(L_r) - \Delta(\widetilde{L}) \geq 2r-2$ 
and if we restrict to the sums over the tiles in the penultimate corona then $\Sigma(\Delta(T^r_i)) - \Sigma(\Delta(\widetilde{T_i})) \geq 2r-2$ since for $L_r$, $\Delta(L_r)=  \Sigma(\Delta(T^r_i))$.  This, however, implies that if
$\Delta(\widetilde{L}) \leq 7r-13$ then $\Sigma(Cr(T^r_i))- \Sigma(Cr(\widetilde{T_i})) \geq  \frac{3(2r-2)}{2}= 3r-3$
and thus $Cr(L_r)- Cr(\widetilde{L}) \geq  3r-3$. Thus $\widetilde{L}$ has at most $9r^2-30r+24$ crossings.  
 For  $r=3$ to lower $\Delta$ by five we lose at least $7.5$ crossings relative to $L_3$, showing $Cr(\widetilde{L}) \leq 14.5$ and thus is clearly below 15 as desired.
 
 This contradicts the assumption that for $r\geq 4$, $\widetilde{L}$ has crossing number $9r^2-30r+25$ or more which in turn  proves that there cannot be a link $L$ on hexagonal $r$-mosaic with $\Delta(L) \leq 7r-13$ and $Cr(L) \geq 9r^2-30r+25$ finalizing the proof of the theorem for $r \geq 4$.  Similarly for $r=3$ the argument leads to the conclusion that if $\Delta(L) \leq 7$ then $Cr(L) \leq 14$ completing that case, too.

\end{proof}

\begin{cor}
For $r \geq 4$ any hexagonal $r$-mosaic containing $K_r$ 
must have exterior degree strictly greater than $7r-13$ and for $r=3$ any hexagonal $3$-mosaic containing $K_3$ 
must have exterior degree strictly greater than $7$.
\label{cor:krext}
\end{cor}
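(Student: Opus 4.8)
The plan is to deduce Corollary~\ref{cor:krext} from Theorem~\ref{thm:none} by feeding in the crossing number of $K_r$ computed in Claim~\ref{claim:krarcn}. Let $L$ be a hexagonal $r$-mosaic that contains $K_r$. Since $L$ is in particular a diagram of the knot $K_r$, we have that $Cr(L)$ is at least the crossing number of $K_r$, which by Claim~\ref{claim:krarcn} is $9r^2-30r+25$ when $r\geq 4$ and $15$ when $r=3$. In the case that $L$ has no nugatory crossings on its exterior, Theorem~\ref{thm:none} applies verbatim and gives $\Delta(L)>7r-13$ for $r\geq 4$ and $\Delta(L)>7$ for $r=3$, which is precisely the conclusion of the corollary. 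So the only real content is to remove the hypothesis that $L$ has no nugatory crossing on its exterior.

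To do this I would show that nugatory crossings on the exterior can always be eliminated at no cost. Suppose $L$ has such a crossing $c$: there is a simple closed curve $\gamma$ meeting the shadow of $L$ transversally only at $c$, bounding the exterior face on one side and a disk $D$ carrying a $1$-string tangle on the other. Untwisting $c$ (the standard Reidemeister~I type move that flips the contents of $D$ and deletes $c$) produces a mosaic $L_1$ which still presents $K_r$, has one fewer crossing, has an exterior face whose boundary walk no longer runs through $c$ and hence satisfies $\Delta(L_1)\leq\Delta(L)$, and still has $Cr(L_1)$ at least the crossing number of $K_r$ --- the last point because a diagram containing a nugatory crossing is never of minimal crossing number, so dropping one crossing cannot push us below the crossing number of $K_r$. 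Iterating, after finitely many untwists one reaches an $r$-mosaic $L^{\ast}$ presenting $K_r$ with no nugatory crossing on its exterior and $\Delta(L^{\ast})\leq\Delta(L)$; applying Theorem~\ref{thm:none} to $L^{\ast}$ then gives $\Delta(L)\geq\Delta(L^{\ast})>7r-13$ (resp.\ $>7$), completing the proof.

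The step I expect to be the main obstacle is verifying that each untwist can be carried out so that the result is again a hexagonal $r$-mosaic, since Theorem~\ref{thm:none} is stated for mosaics and must apply to $L^{\ast}$. The disk $D$ meets only a sub-collection of the tiles of $L$ and is bounded by $\gamma$, which apart from $c$ lies entirely in the exterior face; thus $D$ is a \emph{peninsula} of the mosaic, attached to the rest of $L$ only through the two strands at $c$, and the untwist is realized by reflecting the tile pattern inside $D$ across a line through $c$ aligned with a symmetry axis of the hexagonal lattice, followed by re-selecting the appropriate completion of the boundary tiles. The delicate point to nail down is that the reflected copy of $D$ still fits inside the original $r$-board; I would handle this by taking $c$ to be an outermost nugatory crossing (so that $D$ is the region scooped in from the mosaic boundary to $c$) and choosing the reflection axis so that the image of $D$ stays on the board. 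Should this geometric bookkeeping prove cumbersome, a fallback is to revisit the tile-substitution argument in the proof of Theorem~\ref{thm:none} and check that allowing nugatory crossings on the exterior only relaxes the resulting inequality $Cr(\widehat{L})\leq 9r^2-30r+24$, so that the same contradiction is reached directly without any untwisting.
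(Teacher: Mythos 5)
Your first paragraph is exactly the paper's proof: the corollary is obtained by feeding the crossing number from Claim~\ref{claim:krarcn} ($9r^2-30r+25$ for $r\geq 4$, $15$ for $r=3$) into Theorem~\ref{thm:none}, using $Cr(L)\geq c(K_r)$ for any diagram. The paper stops there (``this follows immediately from the previous theorem''), silently treating the theorem's hypothesis of no nugatory crossings on the exterior as part of the ambient setup. You are right that, read literally, the corollary's statement (``any hexagonal $r$-mosaic containing $K_r$'') is broader than the theorem's hypothesis, so your instinct to address that mismatch is a genuine observation rather than padding.

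However, the extra argument you supply to close that gap is not actually complete, and you say so yourself: the crux is whether untwisting a nugatory crossing can be realized \emph{on the same hexagonal $r$-board}, and your proposed reflection of the ``peninsula'' $D$ has no reason to land back inside the board (a region scooped in from one side of a hexagonal $r$-mosaic, reflected across a lattice symmetry axis through an interior crossing, will in general overhang the boundary), nor is it clear that the reflected strands can be re-tiled as legal suitably connected mosaic tiles. So as written your second and third paragraphs replace one unproved assertion with another. The cheaper and fully rigorous fix is to note that the corollary is only ever invoked (via Claim~\ref{claim:highest} and Theorem~\ref{thm:fit}) for embeddings of $K_r$ that realize its crossing number; such a diagram can have no nugatory crossings at all, since removing one would reduce the crossing count below $c(K_r)$, so Theorem~\ref{thm:none} applies verbatim. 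Equivalently, one should simply read the ``no nugatory crossings on the exterior'' hypothesis into the corollary's statement. Either route avoids the untwisting construction entirely.
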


\begin{proof}
We showed in Claim~\ref{claim:krarcn} that the crossing number of $K_r$ is
$9r^2-30r+25$ for $r \geq 4$ and the crossing number of $K_3$ is 15, so this follows immediately from the previous theorem.
\end{proof}

 We will next show that in any reduced alternating form $K_r$ has exterior degree at most $7r-13$ for $r \geq 4$ and that $K_3$ has exterior degree at most 7 which 
combined with Corollary~\ref{cor:krext} 
  will show that it cannot be embedded in reduced alternating form on an $r$-mosaic.

We know our standard alternating embedding of $K_r$ has exterior degree $7r-13$ for $r \geq 4$ (or 7 for $K_3$).  We need to show that no other reduced alternating embedding has exterior degree above that value.  Since our diagram is indeed reduced and alternating we know by the Tait flyping conjecture that if we think of it on $S^2$ then every other reduced and alternating diagram of the knot can be obtained from this one by flypes \cite{MT}.

\begin{figure}[tpb]
\centering
\includegraphics[width=0.65\textwidth]{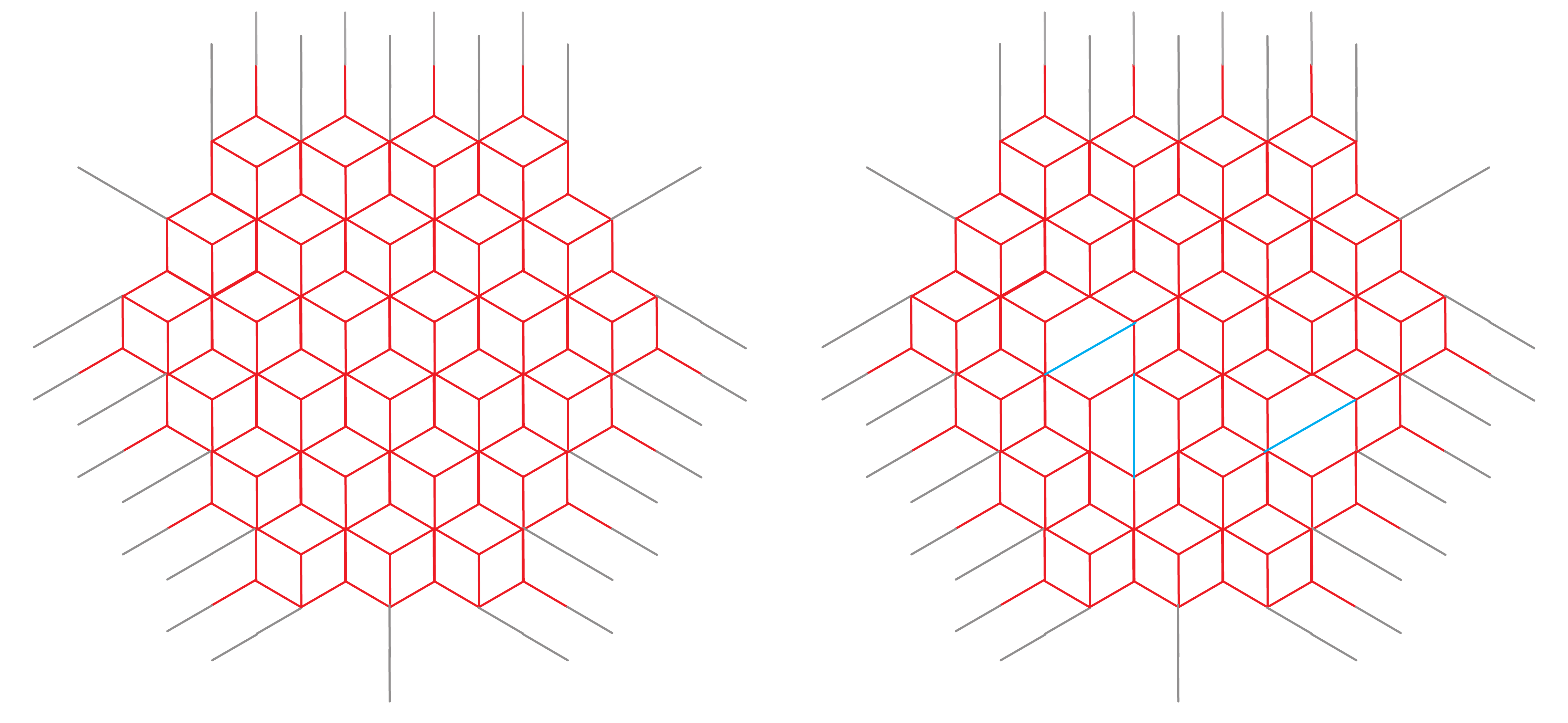}
\caption{On the left we see (most of) the dual graph for $L_5$ in its standard alternating diagram.  The exterior vertex
is not drawn.  All the gray edges would connect up to the exterior vertex. On the right we see the dual graph for $A_5$ (as well as $K_5$ in its non-alternating diagram) formed from the graph on the left by deleting sets of three edges near the center of the graph and replacing them with single edges.  The replacement edges are in blue to make them easier to identify.}
\label{fig:l5dual}
\label{fig:a5dual}
\end{figure}

\begin{claim}
The highest exterior degree of any reduced alternating embedding of $K_r, r \geq 4$ is $7r-13$ (7 for $K_3$), and the highest exterior degree is achieved in the standard alternating embedding.  
\label{claim:highest}
\end{claim}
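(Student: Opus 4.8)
The plan is to combine the Menasco--Thistlethwaite theorem \cite{MT} with the dual-graph flype detection introduced above. Viewing $K_r$ on $S^2$, where by convention the exterior vertex $v_1$ is taken to be one of maximal degree (so ``exterior degree'' and ``maximal degree'' agree, as they do for the standard alternating embedding once it is drawn with its infinite region of largest degree), every reduced alternating embedding of $K_r$ is obtained from the standard alternating embedding by a finite sequence of flypes. By the preceding claim the standard embedding has exterior degree exactly $7r-13$ for $r\ge 4$ (resp.\ $7$ for $r=3$), so this value is realized; it therefore remains only to prove the upper bound, namely that no sequence of flypes starting from the standard embedding produces exterior degree exceeding $7r-13$ (resp.\ $7$).

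First I would catalog every flype available in the standard embedding by running the dual-graph search: list all non-empty $4$-cycles of the dual graph $D$ through $v_1$, then all non-empty $4$-cycles of $D-v_1$ through $v_2$, and finally check that $D-(v_1\cup v_2)$ contains no non-empty $4$-cycle, discarding degree-$2$ vertices throughout since every vertex of a flype $4$-cycle has degree at least $3$. The highly regular structure of the dual graph of $K_r$ (Figures~\ref{fig:l5dual} and \ref{fig:k5flype}) makes this a finite inspection, but carrying it out \emph{uniformly in $r$} --- showing the only non-empty $4$-cycles are the $O(r)$ cycles incident to $v_1$ or to $v_2$, arranged in the two nested families of Figure~\ref{fig:k5flype} --- is the step I expect to be the main obstacle. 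Granting it, every flype of the standard embedding is of one of two kinds: a \emph{$v_1$-flype}, whose outer $4$-cycle contains $v_1$, in which $v_1$ plays the role of the degree-decreasing vertex of Figure~\ref{fig:flype}, so the exterior degree drops by $1$; or a \emph{$v_2$-flype}, whose outer $4$-cycle contains $v_2$ but not $v_1$, which leaves the exterior degree unchanged while lowering $\deg(v_2)$ by $1$. For $r=3$ there is in addition the single somewhat larger flype mentioned after Figure~\ref{fig:flype}, which I would treat by hand and which likewise does not raise the exterior degree.

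Next I would push this through arbitrary flype sequences by induction on their length. The key structural fact to verify is that performing a $v_1$- or $v_2$-flype (or its reverse) returns a dual graph of the same recognizable shape, with $v_1,v_2$ still the only vertices carrying non-empty $4$-cycles and with the same dichotomy of available flypes; the inductive hypothesis is then that throughout the sequence the exterior degree stays $\le 7r-13$ and the only flypes available are $v_1$-flypes, $v_2$-flypes, and their reverses. The genuinely delicate point is reversibility: a reversed $v_1$-flype raises the exterior degree, so one must argue that it can only restore a value previously attained and never exceed $7r-13$. I would do this with a height function: track, for each of the finitely many $v_1$-flype sites, whether it is currently ``flyped in'' or ``flyped out,'' observe that the exterior degree equals $7r-13$ minus the number of sites currently flyped in, and note that this count is $0$ at the standard embedding and always nonnegative. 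Hence the exterior degree is at most $7r-13$ (resp.\ $7$) in every reduced alternating embedding, with equality precisely when no $v_1$-site is flyped in, in particular at the standard embedding, which proves the claim. (Combined with Corollary~\ref{cor:krext} this is what will rule out a reduced alternating $r$-mosaic embedding of $K_r$, but only the bound and its attainment are needed here.)
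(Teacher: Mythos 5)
Your proposal follows essentially the same route as the paper: invoke Menasco--Thistlethwaite, enumerate all flypes of the standard embedding via the non-empty $4$-cycle search (through $v_1$, then $v_2$, then verify $D-(v_1\cup v_2)$ has none), observe that $v_1$-flypes lower the exterior degree and $v_2$-flypes leave it fixed, treat $K_3$ by hand, and control arbitrary flype sequences by noting that each flype site can only be toggled in or out --- your ``height function'' is exactly the paper's observation that a performed flype introduces no new nested $4$-cycles except its own inverse, so every reduced alternating projection is reached by a subset of the original $3r-3$ flypes. The step you flag as the main obstacle (carrying out the $4$-cycle enumeration uniformly in $r$) is indeed the computational heart of the paper's argument, done there by exhibiting the explicit families $(v_1,f_{s-1},e_s,f_s)$, $(v_1,m_{s-1},i_s,m_s)$, $(v_2,b_{s-1},a_s,b_s)$ from the regular structure of $D_r$.

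One loose end: since the claim quantifies over all planar reduced alternating embeddings, the infinite region can be any face, so you must bound the \emph{maximal} vertex degree over all $S^2$ projections, not just $\deg(v_1)$. You set this up in your first paragraph but your height function tracks only $\deg(v_1)$; you still need to note (as the paper does) that $v_1$-flypes raise a vertex only from $3$ to $4$ and $v_2$-flypes only from $5$ to $6$, while $\deg(v_2)$ never increases, so no other vertex ever approaches $7r-13$. This is immediate from your flype catalog but should be said explicitly.
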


\begin{proof}

As we've seen the construction of $K_3$ was slightly different than the construction of $K_r$ for $r \geq 4$.  As a result the dual graph and the flypes behave differently for $K_3$ than the other knots.   
We inspect $K_3$ first and then all the other cases are argued together.
The standard alternating diagram of  $K_3$ seen in Figure~\ref{fig:k3dual} has exterior degree is 7 (see vertex $v_1$ in the figure).   Searching the dual graph we see that there are a relatively small number of possible flypes. If we try them all we see that none of them drive up the degree of $v_1$.  The most interesting thing that happens is that the flype corresponding to the nested 4-cycles with outer 4-cycle $v_1, v_7, v_4, v_9$ lowers the degree of $v_1$ by 2 and raises the degree of $v_4$ to 7 (it will turn out that in no other $K_r$ do the flypes ever cause the degree of an interior vertex to match that of the initial exterior vertex).
The key issue for $K_3$ is that the maximal degree of any vertex in any reduced alternating diagram is 7 so certainly the exterior degree never exceeds 7 in any embedding showing $K_3$ satisfies the claim.

For $r \geq 4$ we argue that even after any combination of flypes $v_1$ has higher degree than any other vertex
and it never has higher degree than in the initial embedding  so the argument is actually simpler than for $K_3$. 
To find flypes, we search the dual graph for nested 4-cycles like we saw in Figure~\ref{fig:flype}.  
The dual graph for $L_r$ is easy to visualize based on the dual graph for $L_5$ shown on the left in Figure~\ref{fig:l5dual}.  
The dual graph for $K_r$ before the isotopy that makes it alternating is the same as the dual graph for $A_r$, which is obtained from the dual graph for $L_r$ by replacing three edges meeting at a vertex of degree three with a single edge as shown for $L_5$ and $K_5$ in the progression from the figure on the left to the one on the right in Figure~\ref{fig:l5dual}.  
In both of the graphs in the figure every non-empty 4-cycle (and thus all nested 4-cycles) contains $v_1$ and because the substitutions are towards the center of the graph all non-empty 4-cycles are disjoint from the new edges and their vertices.  
The isotopy to make $K_r$ alternating  breaks the symmetry of the dual graph and reduces the exterior degree as shown for $K_5$ in Figure~\ref{fig:k5flype}.

\begin{figure}[tpb]
\centering
\includegraphics[width=0.24\textwidth]{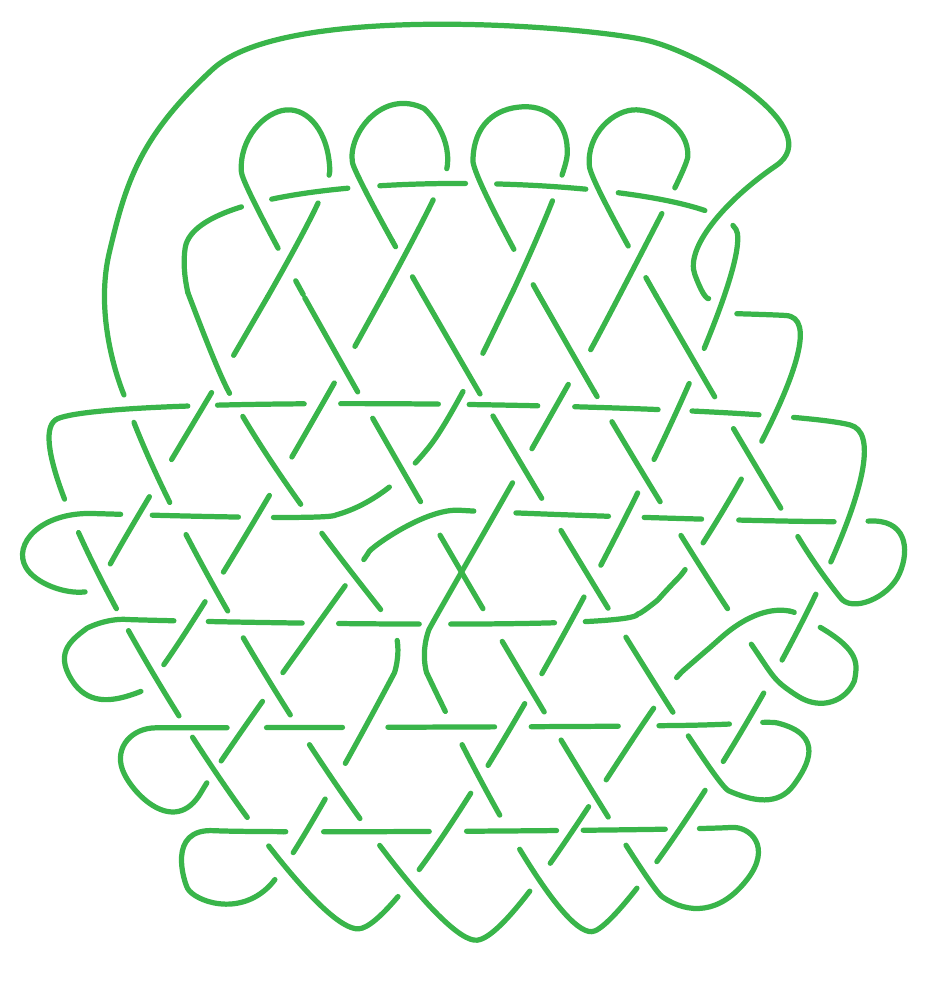}
\includegraphics[width=0.24\textwidth]{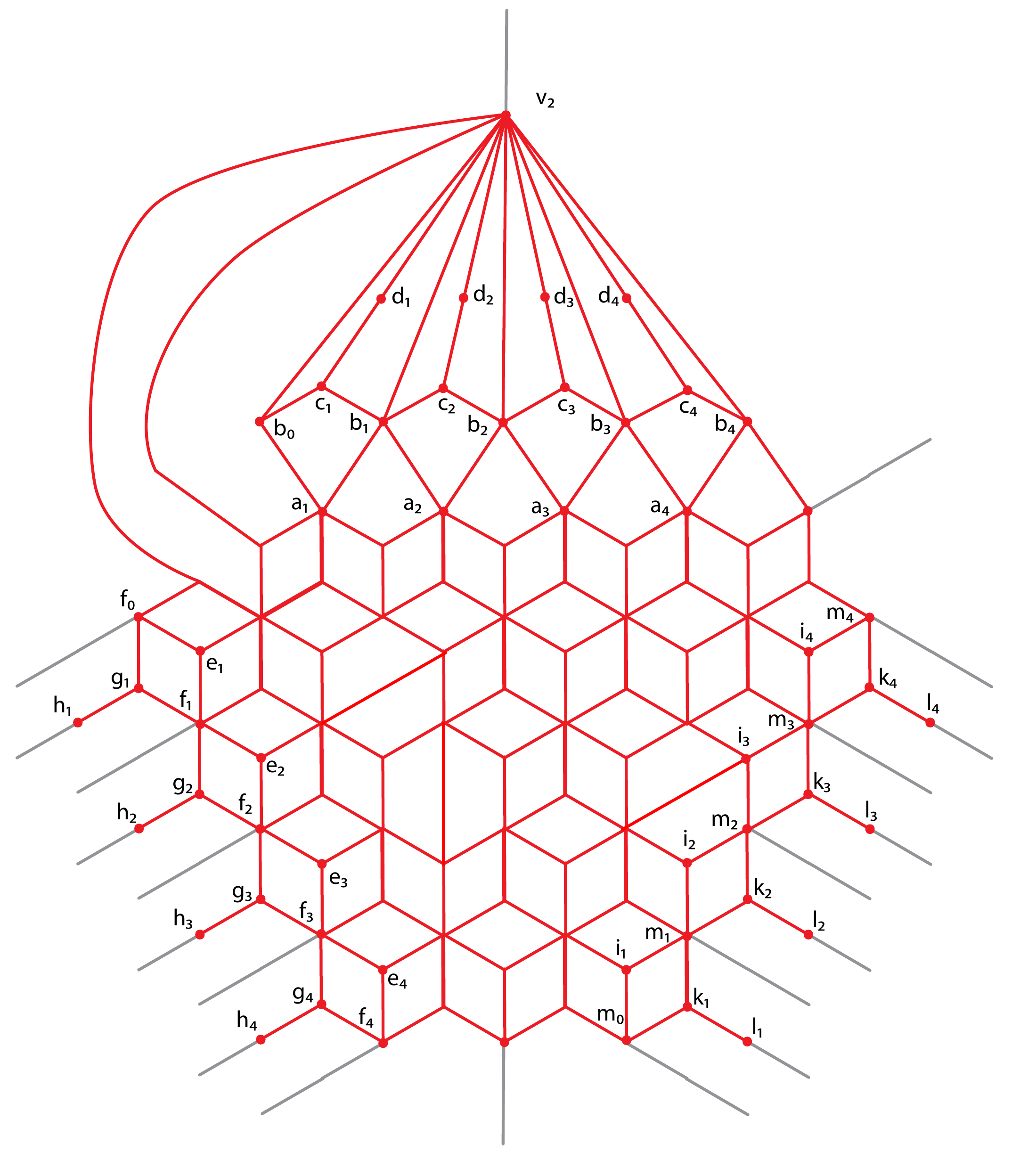}
\includegraphics[width=0.24\textwidth]{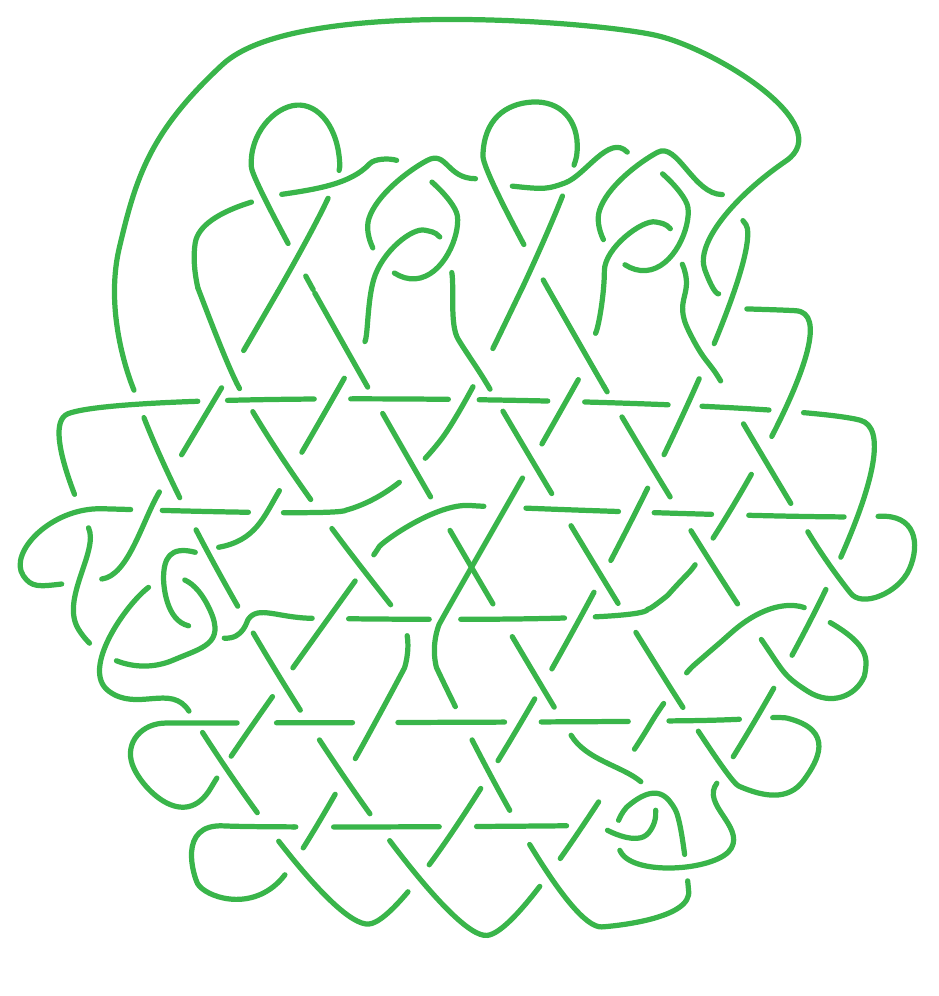}
\includegraphics[width=0.24\textwidth]{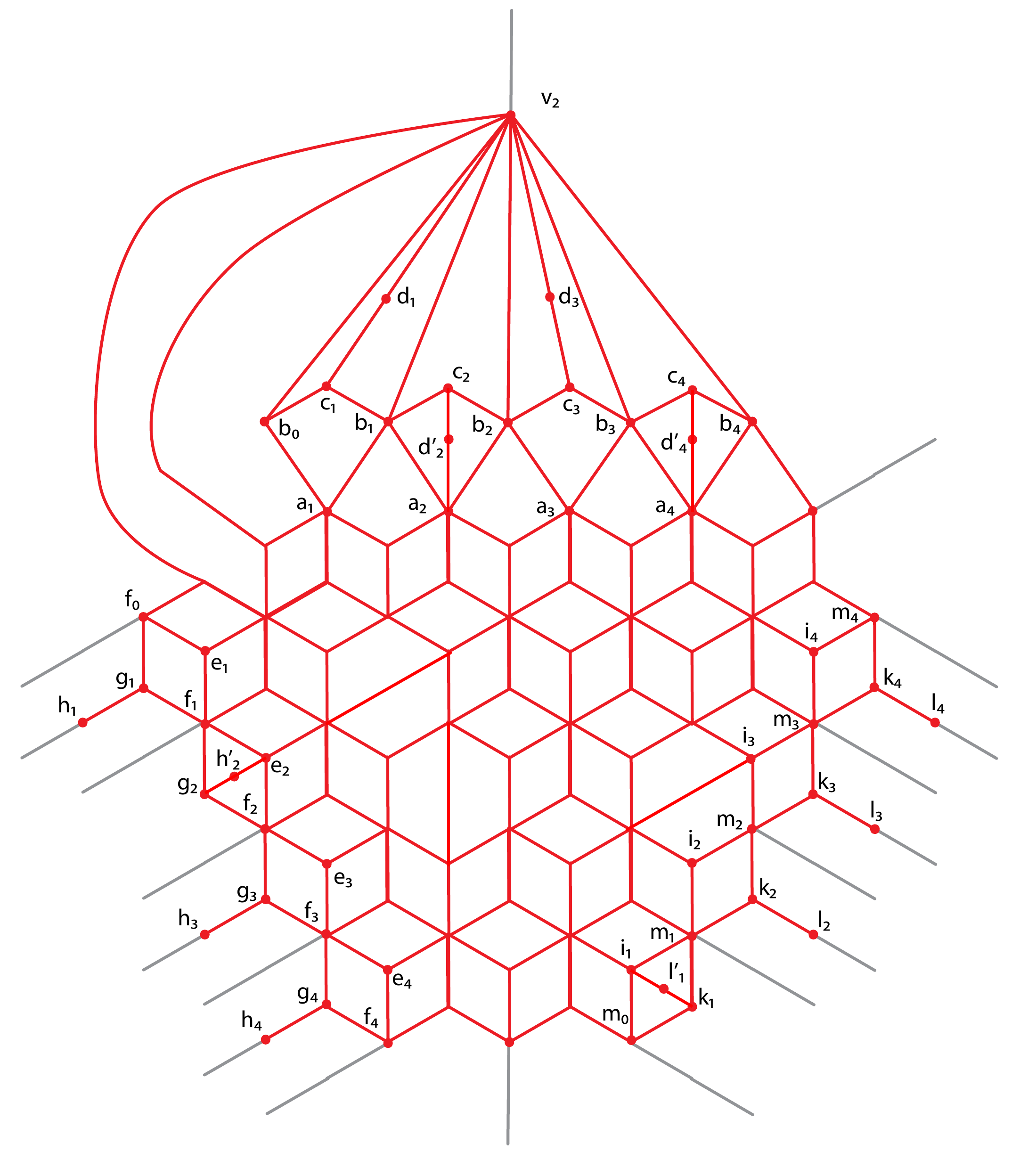}
\caption{The knot $K_5$ in our standard alternating diagram and its dual graph (aside from $v_1$) are shown on the left.  Four flypes are then performed to get to the picture on the right.  Two flypes contain $v_1$ corresponding to the outer 4-cycles $v_1, f_1, e_2, f_2$, $v_1, m_0, i_1, m_1$, 
and two contain $v_2$ corresponding to the outer 4-cycles $v_2, b_1, a_2, b_2$, and $v_2, b_3, a_4, b_4$.
Note that the first type of flype lowers the exterior degree by one each time and the second type of flype lowers the degree of $v_2$ by one each time.}
\label{fig:k5flype}
\end{figure}

  Call the dual graph to $K_r$ in the standard alternating diagram $D_r$.  
As usual call the exterior vertex $v_1$ and next highest degree vertex (which is separated from $v_1$ by the edge that was pulled up in the isotopy to make the diagram alternating) $v_2$.  

We first find all the nested 4-cycles in the graph which contain $v_1$.  It is easy to see that there are
$2r-2$ of these.  Recall that $v_1$ is not drawn in Figure~\ref{fig:l5dual} and all gray edges on the graph end at $v_1$.  Thus
on the bottom left side we see outer 4-cycles $(v_1, f_0, e_1, f_1)$,
$(v_1, f_1, e_2, f_2)$, $(v_1, f_2, e_3, f_3), \dots$, $(v_1, f_{r-2}, e_{r-1}, f_{r-1})$ and for the bottom right side $(v_1, m_0, i_1, m_1)$,
$(v_1, m_1, i_2, m_2)$, $(v_1, m_2, i_3, m_3), \dots$, $(v_1, m_{r-2}, i_{r-1}, m_{r-1})$. The cycles can be seen in  Figure~\ref{fig:k5flype} for the case of $r=5$, but the patterns continue as $r$ grows larger.  Examples of the possible flypes are then displayed on the right side of the figure showing their impact both for the dual graph above and the knot below.

$D_r - v_1$ has $r-1$ sets of nested 4-cycles containing $v_2$.  The vertices are labeled in Figure~\ref{fig:k5flype} to help see the nested 4-cycles in $K_5$, and also the general pattern for $K_r$.
The nested 4-cycles containing $v_2$ have outer 4-cycle $(v_2, b_0, a_1, b_1)$,
$(v_2, b_1, a_2, b_2)$, $(v_2, b_2, a_3, b_3), \dots$, $(v_2, b_{r-2}, a_{r-1}, b_{r-1})$.
$D_r$ has these $r-1$ nested 4-cycles, plus the $2r-2$ more which contain $v_1$.  
It is easy to see that $D_r - (v_1 \cup v_2)$ contains only empty 4-cycles so we have already found all nested 4-cycles.

  Note that the flypes involving $v_1$ drop the degree of $v_1$ by one.  They each raise the degree of another vertex ($e_s$ or $i_s$ depending on which is contained in the outer 4-cycle) from 3 to 4.  The flypes involving $v_2$ do not change the degree of $v_1$, but lower the degree of $v_2$ by one and raise the degree of an $a_s$ from 5 to 6.  As you can see in the figure, the flypes essentially swap the two inner 4-cycles, deleting the two edges running through the 4-cycle that was not empty and adding 2 edges to the previously empty 4-cycle (the effect of a flype on a dual graph can be slightly more complicated than this in general as seen in $K_3$, for example, but the tangle that gets flipped is quite simple in $K_r, r \geq 4$ so the impact on the graph is also simple).

We now know that no flype we can apply to the standard alternating diagram of $K_r$ can increase the exterior degree or increase the degree of $v_2$ above its initial degree.  We also see that after a single flype the degree of any vertex other than $v_1$ or $v_2$ is at most 6.  Thus after a single flype no vertex can have degree higher than the initial degree of $v_1$.  We do, however, want to be certain that there isn't a sequence of flypes which put together could increase the degree of some vertex to a higher value than the initial exterior degree.

It is easy to see that any of the original flypes can be performed independently, but once a given flype is performed the only new flype introduced is the the inverse of the previous flype since the only new nested 4-cycle created is the inverse of the original.  Thus every alternating diagram of $K_r$ is obtained from the standard alternating diagram by doing a subset of the $3r-3$ flypes we found initially. 
All of these flypes either leave the degree of $v_1$ alone or decrease it. The degree of no other vertex ever exceeds the degree of $v_1$ so no vertex in any reduced alternating diagram of $K_r$ ever has higher degree than $v_1$ has in the standard alternating diagram (for $r\geq 4$). Thus the exterior degree of $K_r$ in every possible reduced, alternating diagram is at most $7r-13$.

\end{proof}

Corollary~\ref{cor:krext} combined with Claim~\ref{claim:highest} lead directly to the following theorem.

\begin{theorem}
For every $r \geq 3$, $K_r$ can be embedded on a hexagonal $r$-mosaic, but it cannot achieve its crossing number and be embedded on an $r$-mosaic at the same time.
\label{thm:fit}
\end{theorem}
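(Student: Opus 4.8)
The plan is to assemble pieces that are already in hand: the existence half is immediate from the construction in Section~\ref{sec:fam}, and the impossibility half is a head-on collision between Theorem~\ref{thm:none} (via Corollary~\ref{cor:krext}) and Claim~\ref{claim:highest}.

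First I would check that $K_r$ really does embed on a hexagonal $r$-mosaic. For $r=3$ this is by definition, since $K_3$ was defined as the knot drawn on the $3$-mosaic in Figure~\ref{fig:k3}. For $r\ge 4$, recall that $K_r$ is obtained from $L_r$ --- which sits on the $r$-mosaic by construction --- by first smoothing $r-2$ crossings on central tiles to form $A_r$, and then switching over/under information on finitely many tiles (the fixed-endpoint crossing changes along the second-highest horizontal strand and the reversal of all crossings above it). Neither smoothing a crossing inside a tile nor changing crossing information on a tile moves any arc off the board or out of the legal tile set of Figure~\ref{fig:tiles}, so the outcome is still a diagram drawn on the same hexagonal $r$-mosaic. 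Hence $K_r$ lies on a hexagonal $r$-mosaic for every $r\ge 3$.

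For the impossibility, I would argue by contradiction: suppose some hexagonal $r$-mosaic carries a diagram $D$ of $K_r$ whose number of crossings equals the crossing number of $K_r$, namely $9r^2-30r+25$ for $r\ge 4$ and $15$ for $r=3$ by Claim~\ref{claim:krarcn}. A crossing-minimal diagram can have no nugatory crossing (deleting one would strictly lower the crossing count), so in particular $D$ has no nugatory crossing on its exterior; thus Theorem~\ref{thm:none} applies and yields $\Delta(D)>7r-13$ for $r\ge 4$ and $\Delta(D)>7$ for $r=3$ (this is exactly Corollary~\ref{cor:krext}). On the other hand, $K_r$ is alternating, so by the classical fact --- part of the Kauffman--Murasugi--Thistlethwaite resolution of the Tait conjectures --- that a crossing-minimal diagram of an alternating link is itself reduced and alternating, $D$ is a reduced alternating diagram of $K_r$. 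Being a planar diagram, $D$ has a well-defined infinite region, hence a well-defined exterior vertex and exterior degree, and Claim~\ref{claim:highest} bounds the exterior degree of every reduced alternating embedding of $K_r$ by $7r-13$ (by $7$ when $r=3$). This contradicts $\Delta(D)>7r-13$ (resp.\ $>7$), so no such $D$ exists, which is the theorem.

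I expect no genuinely hard step to remain here: the substantive work is entirely in Theorem~\ref{thm:none} (the tile-substitution inequality forcing $\Delta$ up once the crossing count is near saturation) and in Claim~\ref{claim:highest} (the flype/dual-graph analysis showing the standard alternating projection maximizes exterior degree among reduced alternating projections). The one point to handle carefully in the assembly is the translation ``$D$ realizes the crossing number'' $\Rightarrow$ ``$D$ is reduced and alternating, hence has no nugatory crossing on its exterior,'' since that is precisely what makes Theorem~\ref{thm:none} and Claim~\ref{claim:highest} both applicable to the single diagram $D$; everything else is bookkeeping, including keeping the $r=3$ numerology separate throughout.
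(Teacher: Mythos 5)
Your proposal is correct and takes essentially the same route as the paper, which deduces the theorem directly from the collision of Corollary~\ref{cor:krext} with Claim~\ref{claim:highest}. You have merely made explicit the bridging step the paper leaves implicit, namely that a crossing-minimal diagram of the alternating knot $K_r$ must be reduced and alternating, so that both results apply to the same hypothetical diagram.
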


\section{Open Questions And Conjectures}

Because hexagonal mosaics are relatively new there are a huge number of interesting open questions about them.  Related to Theorem~\ref{thm:fit} one might ask the following.

\begin{q}
What is the smallest crossing number knot which fits on a  hexagonal $r$-mosaic, but not while achieving its crossing number?
\end{q}

\begin{q}
Is there a knot that fits on a  hexagonal $r$-mosaic, but does not on fit on a  hexagonal $r+k$-mosaic while achieving its crossing number for $k \in \Z^+$?
\end{q}

Nearly any question that has been asked or answered about rectangular mosaics can also be asked here.  Here is one obvious example.
The Lomonaco-Kauffman Conjecture states that tame knot theory is equivalent to rectangular mosaic knot theory and one can extend the conjecture to hexagonal mosaics.  The Lomonaco-Kauffman conjecture was proven for rectangular mosaics by Kuriya and Shehab in \cite{KS}.

\begin{conj}
The Lomonaco-Kauffman Conjecture is true for hexagonal mosaics.
\end{conj}

Extending beyond these variants of existing theorems, there are still plenty of interesting questions.  For example, mosaics are a natural setting for looking at random knots.   Hexagonal mosaics build knots far more efficiently than rectangular mosaics so they are especially appealing in this context (see, for example, \cite{AAGRK}).  In the rectangular setting 4-mosaics contain at most 4 crossings.  On the other hand, hexagonal 4-mosaics have up to 57 crossings so while rectangular mosaics are a natural way to generate random knots hexagonal mosaics seem even better.  Petal diagrams of knots have been used in the past to generate random knots in \cite{hass1}
and \cite{hass2}, for example.  Because most links do not have a petal diagram the diagrams had to be adjusted when applied to random links.  No such adjustment is necessary for mosaics.  It would be interesting to see how hexagonal mosaics compare to petal diagrams for generating knots and links.

\begin{q}
What is the distribution of knots (or links) in hexagonal mosaics?  How does this compare to the distribution in rectangular mosaics or petal diagrams.
\end{q}

Since rectangular mosaics use only 5 tiles to generate all knots, it might be computationally helpful to restrict to fewer tiles in the hexagonal case.  Adams showed that every knot has a triple crossing diagram \cite{Adams2} and one can use this to show that every knot can be created as a hexagonal $r$-mosaic for some $r$ while restricting to only the three crossing tiles and the tiles with no crossings.  One can probably use even fewer tiles though.

\bigskip\bigskip

\bibliographystyle{amsplain}

  \bibliography{Hex}

\end{document}